    \newtheorem{theorem}{Theorem}
    \newtheorem{proposition}[theorem]{Proposition}
    \newtheorem{corollary}[theorem]{Corollary}
    \newtheorem{lemma}[theorem]{Lemma}
\theoremstyle{definition}
    \newtheorem{definition}[theorem]{Definition}
    \newtheorem{remark}[theorem]{Remark}
    \newtheorem{ruledup}[theorem]{Rule}
\numberwithin{equation}{section}
\numberwithin{theorem}{section}
\numberwithin{figure}{section}
\newcommand\switchToCyrilic{%
    \renewcommand\rmdefault{wncyr}%
    \renewcommand\encodingdefault{OT2}%
    \normalfont\selectfont}
\DeclareTextFontCommand{\textcyr}{\switchToCyrilic}
\DeclareMathOperator{\Hom}{Hom}
\DeclareMathOperator{\Imag}{Im}
\DeclareMathOperator{\inc}{inc}
\DeclareMathOperator{\Ker}{Ker}
\DeclareMathOperator{\Obs}{Obs}
\DeclareMathOperator{\linspan}{span}
\DeclareMathOperator{\parity}{\Pi}
\newcommand{\ahol}[1]{\measuredangle_{#1}}
\newcommand{\AFpar}{\mathcal{A}^{\FF_2}_{\mathrm{even}}}
\newcommand{\Areal}{\mathcal{A}^{\RR}}
\newcommand{\CC}{\mathbb{C}}
\newcommand{\del}{\partial}
\newcommand{\edges}{\mathcal{E}}
\newcommand{\FF}{\mathbb{Z}}
\newcommand{\geo}{{\mathrm{geo}}}
\newcommand{\intersection}{\iota}
\newcommand{\maillink}[1]{\href{mailto:#1}{#1}}
\newcommand{\Mat}{\mathcal{M}}
\newcommand{\nword}[2]{\ensuremath{#1}\nobreakdash-#2}
\newcommand{\PD}{\mathrm{{P}\mkern-1mu{D}}}
\newcommand{\Proofpart}[2]{\par\noindent\textbf{#1~#2}\ }
\newcommand{\PSLC}{PSL(2,\CC)}
\newcommand{\PSLR}{PSL(2,\RR)}
\newcommand{\rect}{\mathrm{rect}}
\newcommand{\RR}{\mathbb{R}}
\newcommand{\SAS}{\mathcal{S\mkern-2.5mu A}}
\newcommand{\SASdistinguished}{\mathcal{S\mkern-2.5mu A}^\geo}
\newcommand{\SLC}{SL(2,\CC)}
\newcommand{\Sval}{\nword{S^1}{valued}}
\newcommand{\TAS}{T\mkern-2mu\mathcal{AS}}
\newcommand{\tet}{\Delta}
\newcommand{\transp}{\top} 
\newcommand{\triang}{\mathcal{T}}
\newcommand{\ZZ}{\mathbb{Z}}
\begin{document}

\title[Circle-valued angle structures]{Circle-valued angle structures and obstruction theory}

\author[C. Hodgson]{Craig D. Hodgson}
\author[A. Kricker]{Andrew J. Kricker}
\author[R. Siejakowski]{Rafa\l\ M. Siejakowski}

\subjclass[2010]{57M50, 57M27}
\keywords{ideal triangulation, hyperbolic 3-manifold, gluing equations, angle structures,
obstruction theory}

\address{
School of Mathematics and Statistics\endgraf
The University of Melbourne\endgraf
Parkville, Victoria 3010\endgraf
Australia
}
\email{\maillink{craigdh@unimelb.edu.au}}
\address{
School of Physical and Mathematical Sciences\endgraf
Nanyang Technological University\endgraf
21~Nanyang Link\endgraf
Singapore~637371
}
\email{\maillink{ajkricker@ntu.edu.sg}}
\address{
Instituto de Matem\'atica e Estat\'\i{}stica\endgraf
Universidade de S\~ao Paulo\endgraf
Rua do Mat\~ao 1010, 05508-090 S\~ao Paulo, SP\endgraf
Brazil
}
\email{\maillink{rafal@ime.usp.br}}

\begin{abstract}
We study spaces of circle-valued angle structures, introduced by Feng Luo, on ideal triangulations of 3-manifolds. We prove that the connected components of these spaces are enumerated by certain cohomology groups of the 3-manifold with $\mathbb{Z}_2$-coefficients. Our main theorem shows that this establishes a geometrically natural bijection between the connected components of the spaces of circle-valued angle structures and the obstruction classes to lifting boundary-parabolic $\PSLC$-representations of the fundamental group of the 3-manifold to boundary-unipotent representations into $\SLC$.  
In particular, these connected components have topological and algebraic significance independent of
the ideal triangulations chosen to construct them.
The motivation and main application of this study is to understand the domain of the state-integral defining the meromorphic 3D-index of Garoufalidis and Kashaev, necessary in order to classify the boundary-parabolic representations contributing to its asymptotics.
\end{abstract}

\maketitle
\tableofcontents

\section{Introduction}

The subject of this article is a type of structure on an ideal triangulation of a three-manifold, called an \emph{\Sval\ angle structure}, or alternatively a \emph{circle-valued angle structure}.
These structures were first introduced by Luo \cite{luo-volume}, who explored the connections between
normal surface theory and Thurston's gluing equations on ideal triangulations.

We begin by introducing some notation that we will use to discuss ideal triangulations. Then we will define \Sval\ angle structures and introduce the main results of this paper.

Throughout the paper, we assume that $M$ is a compact, connected, orientable \nword{3}{manifold}~$M$ whose boundary~$\del{M}$
consists of $k\geq1$ tori. 

\begin{figure}[tbh]
    \centering
    \begin{tikzpicture}
        \node[anchor=south west,inner sep=0] (image) at (0,0)
            {\includegraphics[width=0.7\columnwidth]{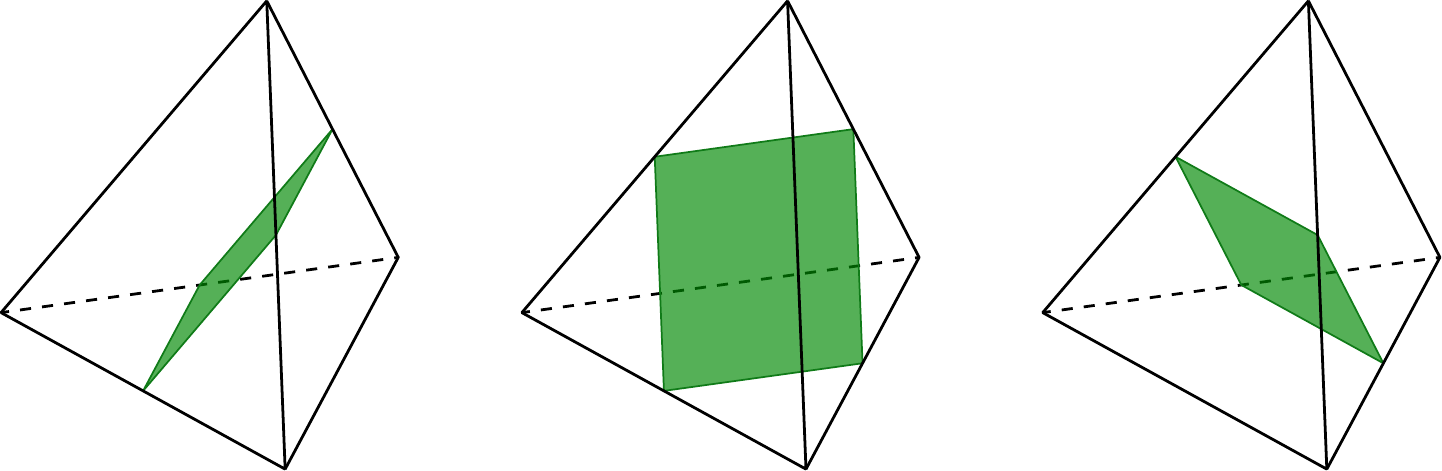}};
        \begin{scope}[x={(image.south east)},y={(image.north west)},
                every node/.style={inner sep=0, outer sep=0}]
            \node[anchor=south east] at (0.1, 0.8)  {$\square$};
            \node[anchor=south east] at (0.47, 0.8)  {$\square'$};
            \node[anchor=south east] at (0.84, 0.8) {$\square''$};
        \end{scope}
    \end{tikzpicture}
    \caption{%
    The cyclic ordering of normal quadrilaterals $\square\to\square'\to\square''\to\square$.
    }\label{fig:quads}
\end{figure}
Following \cite{thurston-notes,neumann-zagier}, we understand an \emph{ideal triangulation} $\triang$ of $M$ to be a
collection $\{\tet_1,\dotsc,\tet_N\}$ of $N\geq2$ distinct, oriented topological ideal tetrahedra
equipped with a system of
face pairings, with the property that the resulting quotient space is homeomorphic to the interior of $M$. 
The face pairings are orientation-reversing 
homeomorphisms identifying pairs of distinct faces of the tetrahedra~$\tet_1,\dotsc,\tet_N$
in such a way that no face remains unglued.
In particular, we allow identifications between different faces of the same tetrahedron, as well
as multiple face gluings between a pair of tetrahedra. 

When $\triang$ is an ideal triangulation,
we denote by $Q(\triang)$ the set of \emph{normal quadrilateral types} in $\triang$,
and by $\edges(\triang)$ the set of edges of $\triang$.
Recall that there are three normal quadrilateral types in each tetrahedron, shown in Figure~\ref{fig:quads}, and so
$|Q(\triang)|=3N$.
Since all connected components of $\partial M$ are tori,
a simple Euler characteristic calculation \cite{youngchoi} shows that $|\edges(\triang)|=N$.

We will use the term \emph{edges} to refer to the elements of $\edges(\triang)$
and the term \emph{tetrahedral edges} when speaking about the edges of the unglued tetrahedra.
The gluing pattern of $\triang$ induces identifications among the tetrahedral edges,
so that each edge~$E\in\edges(\triang)$
can be thought of as an equivalence class of tetrahedral edges.

The three normal quads
contained in an oriented tetrahedron~$\tet$ admit a cyclic ordering
$\square\to\square'\to\square''\to\square$ depicted in Figure~\ref{fig:quads};
note that reversing the orientation of $\tet$ reverses this ordering.
We shall write $\square\subset\tet$ when
$\square$ is a normal quadrilateral type in a tetrahedron~$\tet$, and then the symbols $\square'$ and $\square''$ will denote the quad types that follow $\square$ in the cyclic ordering.

Every normal quad~$\square\subset\tet$ determines a pair of opposite tetrahedral edges of $\tet$,
namely the two edges \emph{facing} the quadrilateral~$\square$ (and disjoint from it).
For any $E\in\edges(\triang)$ and any $\square\in Q(\triang)$,
let $G(E,\square)\in\{0,1,2\}$ be
the number of tetrahedral edges facing the quadrilateral~$\square$ in the edge class of $E$.

\begin{definition}
\label{circlevaluedanglestructure}
    An \emph{\Sval\ angle structure} on a triangulation $\triang$ is a function
    $\omega : Q(\triang)\to S^1$, where $S^1 \subset \CC$ is the unit circle, 
    satisfying
    \begin{align}
        \omega(\square)\omega(\square')\omega(\square'') \label{SAS:tetrah_prod=-1}
            &= -1\ \text{for every}\ \square\in Q(\triang),\ \text{and}\\
        \prod_{\square\in Q(\triang)} \omega(\square)^{G(E,\square)} \label{SAS:edge_prod=1}
            &= 1\ \text{for every}\ E\in\edges(\triang).
    \end{align}
    The set of all \Sval\ angle structures on $\triang$ is denoted by
    $\SAS(\triang)$.
\end{definition}

We will also discuss a refinement of this concept involving extra constraints coming from the boundary. In what follows, the term \emph{peripheral curve} stands for
an oriented, homotopically non-trivial simple closed curve in $\del M$.
By placing a peripheral curve~$\gamma$ in normal position with respect to
the triangulation of $\partial M$ induced by $\triang$, 
we may
associate to $\gamma$ the completeness equation coefficients $G(\gamma, \square)\in\ZZ$, $\square\in Q(\triang)$
in a standard way \cite{thurston-notes,neumann-zagier}. To be precise, $G(\gamma, \square)$ is the sum of a $+1$ for every time $\gamma$ subtends a corner of a triangle in the induced triangulation of the boundary corresponding to the quad-type $\square$ in an anti-clockwise fashion, and a $-1$ for every time it subtends such a corner in a clockwise fashion. (See page 262 of \cite{neumann1990}.)

Then the \emph{multiplicative angle-holonomy of $\omega$ around $\gamma$} is defined by the expression
    \begin{equation}
    \ahol{\omega}(\gamma) =
        \prod_{\square\in Q(\triang)} {\omega(\square)}^{G(\gamma,\square)}
    \in S^1.
\end{equation}

\begin{definition}
\label{peripherallytrivialcirclevaluedanglestructure}
    An \Sval\ angle structure $\omega$ on the triangulation $\triang$ is said to be \emph{peripherally trivial} if for every peripheral curve $\gamma$, the multiplicative angle-holonomy of $\omega$ around $\gamma$
equals 1. The set of all peripherally trivial \Sval\ angle structures on $\triang$ is denoted by $\SAS_0(\triang)$.
\end{definition}

Of importance to this work are
two settings where \Sval\ angle structures arise naturally. 
The first setting is that of Thurston's gluing and completeness equations, which are reviewed in Section~\ref{sec:gluing-equations}. If $z: Q(\triang)\rightarrow\CC\setminus\{0,1\}$ is an algebraic solution of Thurston's gluing equations, then a corresponding \Sval\ angle structure is obtained as the ``phase part'' of the complex shape function $z$: 
\begin{equation}\label{corresponding-angle-structure}
\omega(\square)=\frac{z(\square)}{|z(\square)|}\in S^1.
\end{equation}
If, in addition, $z$ satisfies the completeness equations, then $\omega\in\SAS_0(\triang)$. 

The second setting where \Sval\ angle structures arise naturally 
is the meromorphic 3D-index of Garoufalidis and Kashaev \cite{garoufalidis-kashaev}. Their construction associates to an ideal triangulation a meromorphic function of $2k$ variables, where $k$ denotes the number of boundary components,
leading to the definition of
a topological invariant of the underlying 3-manifold. In the case that the ideal triangulation admits a strict angle structure, the meromorphic 3D-index can be expressed in terms of the combinatorially defined $q$-series 3D-index of Dimofte, Gaiotto and Gukov \cite{dimofte-gaiotto-gukov 3D index,dimofte-gaiotto-gukov gauge theories,stavros}.

The meromorphic 3D-index of an ideal triangulation $\triang$ is defined as a certain \emph{state integral}. For every edge~$E\in\edges(\triang)$, there is one integration variable~$y_E$
taking values in the unit circle~$S^1\subset\CC$.
The integrand is a product of \emph{tetrahedral weights}, one for each tetrahedron.
In the work \cite{hodgson-kricker-siejakowski} the authors show,
by performing a change of variables on these $S^1$-valued edge variables, that the evaluation at unity of the Garoufalidis-Kashaev integral can be expressed as an integral over exactly one of the connected components of the manifold $\SAS_0(\triang)$ of peripherally trivial \Sval\ angle structures.
This observation provides the motivation for our study of the properties of the space
of \Sval\ angle structures in this paper.

\subsection{General properties of the manifolds of circle-valued angle structures}

Both of the sets $\SAS(\triang)$ and $\SAS_0(\triang)$ acquire real manifold structures as subsets of $\mathbb{C}^{3N}$. These sets are straightforward to compute. 
Note that in each case they are the set of solutions in $(S^1)^{3N}\subset\mathbb{C}^{3N}$ to a system of monomial equations of the form
\[
\prod_{\square\in Q(\triang)}(\omega(\square))^{n_i(\square)}=\varepsilon_i. 
\]
Here $i$ indexes the equation, $n_i(\square)\in \mathbb{Z}$ for all $\square\in Q(\triang)$, and $\varepsilon_i=+1$ or $-1$.

By using elementary row and column operations to reduce the matrix of exponents to Smith Normal Form, one can diagonalize this system to one of the form
\[
(\omega'_1)^{m_1}=\varepsilon'_1,\ \ldots\ ,\ (\omega'_{d})^{m_{d}}=\varepsilon'_{d},
\]
where the $m_i\in\mathbb{Z}$ and $\varepsilon'_i$ equals $1$ or $-1$.

This confirms that the sets $\SAS(\triang)$ and $\SAS_0(\triang)$ are indeed real manifolds, and in fact are disjoint collections of tori. These facts were first observed by Luo \cite{luo-volume}. It is also easy to read off from this form of the equations the number of connected components. (It is the product of the non-zero $|m_j|$.)

Our investigation into these structures began with computations in a large number of examples of the number of connected components of the spaces \cite{chen}. These computations led us to the following theorem.

\begin{theorem}\label{intro-connected-component-counts}
Consider an ideal triangulation~$\triang$, consisting of $N$ tetrahedra, of a connected
orientable 3-manifold $M$ with $k$ toroidal ends.
\begin{itemize}
    \item
    The space $\SAS(\triang)$ consists of
    $\left|\Ker\left(\iota^*:H^2(M;\FF_2)\rightarrow H^2(\partial M;\FF_2)\right)\right|$ tori of real
    dimension $N+k$, where $\iota$ denotes the inclusion map of the boundary $\partial M$ into $M$.
    \item
    The space $\SAS_0(\triang)$ consists of $|H^2(M,\partial M;\FF_2)|$ tori of real dimension $N-k$.
\end{itemize}
\end{theorem}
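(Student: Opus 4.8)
The plan is to convert the monomial description of $\SAS(\triang)$ and $\SAS_0(\triang)$ into a statement about the cokernel of an explicit integer matrix, and then to identify that cokernel cohomologically, using the obstruction-theoretic viewpoint of the paper together with the long exact sequence of the pair $(M,\partial M)$. First I would package equations \eqref{SAS:tetrah_prod=-1}--\eqref{SAS:edge_prod=1} as a single homomorphism of free abelian groups, $A\colon\ZZ^{\{\tet_1,\dots,\tet_N\}}\oplus\ZZ^{\edges(\triang)}\to\ZZ^{Q(\triang)}$, sending $\tet\mapsto\sum_{\square\subset\tet}\square$ and $E\mapsto\sum_{\square}G(E,\square)\,\square$. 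Applying $\Hom(-,S^1)$, which is exact since $S^1$ is divisible, turns $A$ into a homomorphism of tori $\Phi_A\colon(S^1)^{Q(\triang)}\to(S^1)^{\{\tet_1,\dots,\tet_N\}}\times(S^1)^{\edges(\triang)}$ having $\SAS(\triang)$ as a fibre; granting non-emptiness --- which is recovered by the surjectivity statement below, so the reasoning is not circular --- $\SAS(\triang)$ is a coset of $\Ker\Phi_A\cong\Hom(\operatorname{coker}A,S^1)$, whence its number of connected components is $\bigl|(\operatorname{coker}A)_{\mathrm{tors}}\bigr|$ and the common dimension of its components is $\operatorname{rank}(\operatorname{coker}A)$. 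The same applies to $\SAS_0(\triang)$ with $A$ replaced by the matrix $A'$ obtained by adjoining one column $\sum_\square G(\gamma,\square)\,\square$ for each $\gamma$ in a basis of $H_1(\partial M;\ZZ)$, since $\omega\mapsto\ahol{\omega}$ is a homomorphism $H_1(\partial M;\ZZ)\to S^1$. The rank statements $\operatorname{rank}(\operatorname{coker}A)=N+k$ and $\operatorname{rank}(\operatorname{coker}A')=N-k$ --- equivalently $\operatorname{rank}A=2N-k$ and $\operatorname{rank}A'=2N+k$ --- are the ones due to Luo and follow from the Neumann--Zagier relations: the $N$ edge equations admit precisely a $k$-dimensional space of linear dependences, one per toroidal end, while the $2k$ completeness equations are independent modulo the tetrahedron and edge equations.

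The heart of the argument is then to identify $(\operatorname{coker}A)_{\mathrm{tors}}$ with $\Ker\bigl(\iota^*\colon H^2(M;\FF_2)\to H^2(\partial M;\FF_2)\bigr)$ and $(\operatorname{coker}A')_{\mathrm{tors}}$ with $H^2(M,\partial M;\FF_2)$. For this I would assemble from $\triang$ a short cochain complex of free $\ZZ$-modules involving the quad types $Q(\triang)$, one of whose coboundary maps is built from $A$, compare it (via an explicit chain map) with the cellular cochain complex of $M$, and feed in the long exact sequence of $(M,\partial M)$; this should present $(\operatorname{coker}A)_{\mathrm{tors}}$ as a subgroup of $H^2(M;\FF_2)$, which --- conceptually --- is the $\ZZ_2$-obstruction governing the lift of boundary-parabolic $\PSLC$-representations to boundary-unipotent $\SLC$-ones. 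One must then show the subgroup is \emph{exactly} $\Ker\iota^*$: a peripheral curve $\gamma$ furnishes a mod-$2$ cocycle pulled back from $\partial M$, and pairing the obstruction class against it returns the multiplicative angle-holonomy $\ahol{\omega}(\gamma)$, so the classes realised by $\SAS(\triang)$ are precisely those restricting trivially to $\partial M$; imposing $\ahol{\omega}(\gamma)=1$ for every $\gamma$ removes exactly that constraint, and the same analysis run relative to $\partial M$ yields all of $H^2(M,\partial M;\FF_2)$. In effect one is proving that the obstruction-class map is locally constant on $\SAS(\triang)$ and induces a bijection $\pi_0(\SAS(\triang))\isomorphically\Ker\iota^*$ --- the numerical shadow of the paper's main theorem, from which the present statement may alternatively be deduced. (As a by-product, since $\Ker\iota^*$ is an $\FF_2$-vector space, this forces every Smith invariant of $A$ to be $1$ or $2$, a fact not visible from the matrix alone.)

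The step I expect to be the genuine obstacle is this cohomological identification --- building the auxiliary cochain complex and its comparison with the cell structure of $M$ precisely enough that both the $2$-primarity of the torsion and the correct targets ($\Ker\iota^*$, respectively $H^2(M,\partial M;\FF_2)$) fall out --- together with the two halves of bijectivity on components: surjectivity, which amounts to realising a prescribed class by an honest $S^1$-angle structure (and is what re-supplies non-emptiness), and injectivity, which amounts to connecting any two $S^1$-angle structures with equal obstruction class by a path inside $\SAS(\triang)$.
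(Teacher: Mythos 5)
Your reduction of the count to the structure of $\operatorname{coker}A$ is a sound start and differs from the published route: you Pontryagin-dualize the exponent matrix, identify $\SAS(\triang)$ as a coset of $\Hom(\operatorname{coker}A,S^1)$, and read off that the number of components is $\bigl|(\operatorname{coker}A)_{\mathrm{tors}}\bigr|$ while the dimension is $\operatorname{rank}(\operatorname{coker}A)$; the rank count $\operatorname{rank} A = 2N-k$, $\operatorname{rank} A' = 2N+k$ is indeed the Neumann--Zagier/Luo computation and is correctly invoked. This is plausibly close to the ``first combinatorial proof'' the authors allude to in the Introduction, whereas the paper instead deduces the statement as a corollary of Theorem~\ref{obstruction-theorem}, which is proved by constructing explicit $\FF_2$-cocycles via the rectangle map $\rect$ on the doubly-truncated complex $\triang_{00}$, together with the fanning construction, Neumann's parity theory, and the existence result Theorem~\ref{generalization-of-6.1} for integer pseudo-angle structures.

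The genuine gap is precisely where you place it, and it is the whole of the theorem: you do not carry out the identification of $(\operatorname{coker}A)_{\mathrm{tors}}$ with $\Ker\iota^*$ nor of $(\operatorname{coker}A')_{\mathrm{tors}}$ with $H^2(M,\partial M;\FF_2)$, but only gesture at ``a short cochain complex \dots compared with the cellular cochain complex of $M$'' and the long exact sequence of the pair. Without that chain map actually constructed and shown to induce the right comparison on torsion, nothing is proved. Two further points are asserted without argument and are not automatic: (a) non-emptiness of $\SAS(\triang)$, which in your framework amounts to the specific target point $\bigl((-1,\dots,-1),(1,\dots,1)\bigr)$ lying in the image of $\Phi_A$, i.e.\ to the vanishing of a mod-$2$ character on $\Ker A$ --- this is a real combinatorial fact (essentially \cite[Theorem~2]{neumann1990}) and cannot be ``recovered'' from the surjectivity of a map $\pi_0(\SAS)\to\Ker\iota^*$, since that map has empty domain if $\SAS$ is empty; and (b) the $2$-primarity of $(\operatorname{coker}A)_{\mathrm{tors}}$, without which the putative identification with an $\FF_2$-cohomology group is not even well-posed --- you flag this as a by-product, but at the point in the argument where it is needed it is an assumption, not a consequence. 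In short: the strategy is reasonable and genuinely different from the paper, but the proposal stops at the outline of the one lemma that carries all the content.
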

The dimensions of the above spaces are the same as those of the corresponding familiar spaces of
real angle structures, and are already known for that reason, cf.~\cite{luo-volume,futer-gueritaud}. We will review these details in Section~\ref{angle_structures_section}. We also remark that, by using Poincar\'e duality, 
 the above component counts of $\SAS(\triang)$ and $\SAS_0(\triang)$ may alternatively be given as $|H_1(\hat{M};\FF_2)|$ and
$|H_1(M;\FF_2)|$, where $\hat{M}$ denotes the end compactification of $M$, which
results from filling in the vertices in the ideal triangulation~$\triang$.

Our first proofs of this theorem were combinatorial, obtained by interpreting these structures in terms of Walter Neumann's complex associated to an ideal triangulation \cite{neumann1990}. But it turned out there was a deeper, geometric story to uncover here, prompted by the observation that the aforementioned cohomology groups 
classify lifting obstructions for 
$\PSLC$-representations. 

\subsection{Circle-valued angle structures and obstruction theory}

In this section we will introduce our main results. We will use natural geometric ideas to concretely construct bijections between the set of components of the spaces $\SAS(\triang)$ and $\SAS_0(\triang)$ and the cohomology groups that count them, as described in Theorem~\ref{intro-connected-component-counts}.

The algebraic 
setting is the problem of lifting representations 
$\rho\colon\pi_1(M)\rightarrow \PSLC$ to representations $\tilde{\rho}\colon\pi_1(M)\rightarrow\SLC$.  We also consider the refined problem of lifting boundary-parabolic representations to $\PSLC$ to boundary-unipotent representations to $\SLC$.

Recall that the central extension 
\[
0\to\{\pm1\}\rightarrowtail\SLC\to\PSLC\to0
\]
is non-trivial, so a general representation $\rho: \pi_1(M)\to\PSLC$
may not admit a lift to $\SLC$. 
This failure to lift can be measured by an obstruction class. To each $\rho\colon \pi_1(M)\to\PSLC$ we can associate a cohomology class in
$H^2(M;\{\pm1\})\cong H^2(M;\FF_2)$ such that the representation $\rho$ lifts to $\SLC$ if and only if the class is zero.
See \cite{culler-lifting}, \cite[\S2]{calegari} or \cite[\S2]{acuna-montesinos} for the details of the construction of this class. 
For convenience, we briefly recall the idea in the next two paragraphs. 

Represent $M$ 
by a cell-complex. (For example, of particular importance in this work are the cell-complexes obtained from $\triang$ by taking \emph{doubly-truncated tetrahedra} and \emph{cylinders around edges}, 
as described in Section~\ref{truncated-triangulations}.) Choose a basepoint in the 0-skeleton and choose orientations of the edges. Attach elements $g(e)\in\PSLC$ to the edges $e$ so that when a loop $\gamma$ in the 1-skeleton is obtained as a sequence of edges, the product of the corresponding elements of $\PSLC$ (with power $+1$ when the edge is traversed following the chosen orientation, and $-1$ when it is traversed against the orientation) gives the element $\rho([\gamma])\in \PSLC$. It is easy to see, for example 
by using a spanning-tree, that such a choice exists. 

Now we try to lift this representation. For each edge $e$, choose an element $\tilde{g}(e)\in\SLC$ lifting $g(e)$. This defines a 2-cocycle valued in the abelian group $\FF_2$: a 2-cell is mapped to $0$ if the product of the $\tilde{g}(e)$ around the boundary of the cell is $I\in \SLC$ and is mapped to $1\in \FF_2$ if the product is $-I$.
This cocycle is zero if and only if this assignment $\tilde{g}$ defines a representation to $\SLC$. If the cocycle is non-zero, we can attempt to fix it by adjusting our choices of lifts $\tilde{g}(e)$ of the $g(e)$. 
This is equivalent to performing a coboundary modification on $g$.
It follows that the representation $\rho$ lifts if and only if the 2-cocycle thus constructed is trivial in the cohomology group $H^2(M;\FF_2)$. 

There is also a relative version of the lifting problem which
concerns the case when $\rho$ is \emph{boundary-parabolic};
in terms of M\"obius transformations, this means
that $\rho$ takes the fundamental group of each component of $\partial M$
to a group conjugate into $\{z\mapsto z+c: c\in\CC\}<\PSLC$.
In this case, we may wish to lift $\rho$ to a \emph{boundary-unipotent}
\nword{\SLC}{representation}, i.e., a representation mapping the boundary
subgroups of $\pi_1(M)$ into subgroups conjugate to
$\displaystyle\left\{\begin{bmatrix}1&b\\0& 1\end{bmatrix}: b\in\CC\right\}<\SLC$.
The obstruction classes to lifting boundary-parabolic \nword{\PSLC}{representations}
to boundary-unipotent \nword{\SLC}{representations} are relative cohomology classes
in $H^2(M,\partial M;\FF_2)$. See~\cite[\S9.1]{garoufalidis-thurston-zickert} for the details of this version.

\medskip
Now we present our main theorem. In what follows, we treat all cohomology groups with \nword{\FF_2}{coefficients} as groups with the discrete topology.

\begin{theorem}\label{obstruction-theorem}
There exist maps
\[
    \Phi:\SAS(\triang)\to H^2(M;\FF_2)
    \quad\text{and}\quad
    \Phi_0:\SAS_0(\triang)\to H^2(M,\partial M;\FF_2)
\]
satisfying the following conditions:
\begin{enumerate}[(i)]
\item\label{item:continuity}
$\Phi$ and $\Phi_0$ are continuous, i.e., constant on the
connected components of their domains.
\item\label{obstructions-bijectivity}
The induced map on the connected components,
\[
    \pi_0(\Phi_0):\pi_0\bigl(\SAS_0(\triang)\bigr)\to H^2(M,\partial M;\FF_2),
\]
is a bijection. Moreover, the induced map
\[
    \pi_0(\Phi):\pi_0\bigl(\SAS(\triang)\bigr)\to H^2(M;\FF_2)
\]
is an injection whose image coincides with the kernel of the map
\begin{equation}\label{restriction-to-boundary-H2F2}
    \iota^* : H^2(M;\FF_2) \to H^2(\partial M;\FF_2)
\end{equation}
induced by the inclusion of the boundary into $M$.
\item\label{item:correct-obstructions}
If $z$ is an algebraic solution of Thurston's gluing equations on $\triang$
defining a representation $\rho:\pi_1(M)\to\PSLC$
and $\omega(\square)=z(\square)/|z(\square)|$
is the associated circle-valued angle structure, then $\Phi(\omega)\in H^2(M;\FF_2)$ is the
cohomological obstruction to lifting $\rho$ to an $\SLC$ representation.
Moreover, if $\rho$ is boundary-parabolic,
then $\Phi_0(\omega)\in H^2(M,\partial M;\FF_2)$ is the cohomological obstruction
to lifting $\rho$ to a boundary-unipotent $\SLC$ representation.
\end{enumerate}
\end{theorem}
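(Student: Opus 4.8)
The plan is to realise a circle-valued angle structure $\omega$ as the rotational part of a flat $\PSLC$-structure on the cell complex assembled from doubly-truncated tetrahedra and cylinders over the edges described in Section~\ref{truncated-triangulations}, and to define $\Phi(\omega)$ to be the obstruction to lifting that flat structure to $\SLC$. Write $X$ for this complex, so that $X\simeq M$ and the subcomplex $\partial X$ built from the truncation triangles is homotopy equivalent to $\partial M$. After fixing a basepoint and orienting the edges of $X$, I would attach to each oriented edge $e$ an element $g_\omega(e)\in\PSLC$ by imitating, verbatim, the holonomy construction associated with a solution of Thurston's gluing equations, but with every complex shape parameter $z(\square)$ replaced by the unimodular number $\omega(\square)$: the edges of a truncated tetrahedron receive rotations through the arguments of $\omega(\square)$, $\omega(\square')$, $\omega(\square'')$ together with the same fixed elementary matrices appearing in the developing map of an ideal tetrahedron. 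The key combinatorial point is that the two equations of Definition~\ref{circlevaluedanglestructure} are \emph{exactly} the flatness conditions for $g_\omega$ on the $2$-cells of $X$: the equation $\omega(\square)\omega(\square')\omega(\square'')=-1$ is what makes the assignments consistent across the hexagonal faces of each tetrahedron, and $\prod_{\square}\omega(\square)^{G(E,\square)}=1$ says precisely that the product of the $g_\omega(e)$ around the meridian of the cylinder over $E$ is trivial in $\PSLC$. Thus the coboundary of $g_\omega$ vanishes on every $2$-cell.

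Having $g_\omega$, I would choose arbitrary lifts $\tilde g_\omega(e)\in\SLC$ and define $c_\omega\in C^2(X;\FF_2)$ by declaring $c_\omega(\sigma)$ to be $0$ or $1$ according to whether the product of the $\tilde g_\omega(e)$ around $\partial\sigma$ is $+I$ or $-I$, exactly as in the construction recalled above from \cite{culler-lifting,garoufalidis-thurston-zickert}; a standard argument shows $c_\omega$ is a cocycle whose class is unaffected by the choices of lifts and of the auxiliary data, and one sets $\Phi(\omega)=[c_\omega]\in H^2(M;\FF_2)$. Equivalently, $\Phi(\omega)=\beta(a_\omega)$, where $a_\omega\in Z^1(X;S^1)$ records the arguments of the $\omega(\square)$ along the edges of $X$ and $\beta\colon H^1(X;S^1)\to H^2(X;\FF_2)$ is the Bockstein of $0\to\FF_2\to S^1\xrightarrow{z\mapsto z^2}S^1\to 0$. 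When $\omega$ is peripherally trivial its angle-holonomies vanish, so $a_\omega$ restricts to a coboundary on $\partial X$ and $\Phi_0(\omega)$ is the resulting class in $H^2(M,\partial M;\FF_2)$; moreover, for arbitrary $\omega\in\SAS(\triang)$ the restriction of $g_\omega$ to each boundary torus takes values, up to conjugacy and coboundary, in a single abelian rotation group, so the associated cocycle on $\partial X$ vanishes and $\iota^*\Phi(\omega)=0$. Item~\eqref{item:continuity} is then immediate, since $\SLC\to\PSLC$ is a covering and so $c_\omega(\sigma)$ is an integer-valued locally constant function of $\omega$; and for item~\eqref{item:correct-obstructions} one connects $z$ to $\omega=z/|z|$ by rescaling each modulus towards $1$: the phase of $z(\square)z(\square')z(\square'')$ stays equal to $\pi$ along the way, so every intermediate datum still yields a flat $\PSLC$-cocycle on $X$ (no longer coming from a circle-valued angle structure, which is irrelevant) and the $\FF_2$-class is constant along the deformation; it equals the lifting obstruction of $\rho$ at one end and $\Phi(\omega)$ at the other, and the boundary-parabolic case is identical relative to $\partial X$.

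For item~\eqref{obstructions-bijectivity} I would argue by counting against Theorem~\ref{intro-connected-component-counts}. The group of homogeneous, peripherally trivial solutions $\eta\colon Q(\triang)\to S^1$ (those with $\eta(\square)\eta(\square')\eta(\square'')=1$ and $\prod_{\square}\eta(\square)^{G(E,\square)}=1$ for all $E$) acts simply transitively on $\pi_0(\SAS_0(\triang))$, and since the decorating rotations multiply within a common abelian circle one has $a_{\eta\omega}=a_\eta+a_\omega$, whence $\Phi_0(\eta\omega)=\Phi_0(\omega)+\psi_0([\eta])$ for a homomorphism $\psi_0$ from the finite group of components of homogeneous peripherally trivial solutions to $H^2(M,\partial M;\FF_2)$. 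By Theorem~\ref{intro-connected-component-counts} this group has the same order as $H^2(M,\partial M;\FF_2)$, so $\psi_0$ — equivalently $\pi_0(\Phi_0)$ — is bijective as soon as it is surjective; surjectivity I would obtain by exhibiting, for each relative $\FF_2$-class, a homogeneous peripherally trivial $\eta$ realising it, which is where the combinatorics of Neumann's complex \cite{neumann1990} re-enters, combined with the fact that $\beta\colon H^1(M,\partial M;S^1)\to H^2(M,\partial M;\FF_2)$ is onto because $H^2(M,\partial M;S^1)\cong\Hom(H_2(M,\partial M;\ZZ),S^1)$ is torsion-free, $H_2(M,\partial M;\ZZ)\cong H^1(M;\ZZ)$ being free abelian. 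The statements for $\Phi$ are then formal: $\Phi_0$ and $\Phi$ intertwine the inclusion $\SAS_0(\triang)\hookrightarrow\SAS(\triang)$ with the connecting homomorphism $j^*\colon H^2(M,\partial M;\FF_2)\to H^2(M;\FF_2)$, whose image is $\Ker\iota^*$ by the long exact sequence of the pair; since $\pi_0(\Phi_0)$ is onto and $\operatorname{im}\Phi\subseteq\Ker\iota^*$, the map $\pi_0(\Phi)$ is onto $\Ker\iota^*$, hence bijective because $|\pi_0(\SAS(\triang))|=|\Ker\iota^*|$ by Theorem~\ref{intro-connected-component-counts}.

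The step I expect to be hardest is pinning down the decoration $g_\omega$ on the doubly-truncated cell complex precisely enough that both of the following hold at once: that its flatness on $2$-cells is genuinely equivalent to the two equations of Definition~\ref{circlevaluedanglestructure}, and that, for $\omega=z/|z|$, it is deformable within flat $\PSLC$-cocycles to the holonomy decoration of $\rho$, so that $\Phi(\omega)$ is identified with the classical lifting obstruction of item~\eqref{item:correct-obstructions}. This compatibility is what carries the geometric content of the theorem; once it is in place, the remaining ingredients — continuity, the Bockstein reformulation, and the cardinality bookkeeping — are routine.
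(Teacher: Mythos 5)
Your construction of $\Phi$ rests on the claim that the two equations of Definition~\ref{circlevaluedanglestructure} are ``exactly'' the flatness conditions for a $\PSLC$-valued decoration $g_\omega$ of the $1$-cells of $\triang_{00}$.  This is not true, and the failure occurs precisely on the boundary hexagonal $2$-cells.  With the kind of phase-only decoration you describe (diagonal rotation matrices on short edges, fixed elementary matrices on long and medium edges), the holonomies around the large hexagons and around the rectangles are automatically trivial in $\PSLC$, and the edge-disc condition recovers $\prod_\square\omega(\square)^{G(E,\square)}=1$.  But the holonomy around a boundary hexagon computes to a matrix $\left[\begin{smallmatrix}1&*\\0&1\end{smallmatrix}\right]$ whose off-diagonal entry involves a three-term sum like $1+\omega(\square)^{-1}+\omega(\square'')$, and this vanishes only when the relation $z(\square'')=1-1/z(\square)$ from \eqref{z-relations} holds --- an algebraic identity among the full shape parameters which is \emph{not} a consequence of $\omega\omega'\omega''=-1$.  (Try $\omega=\omega''=1$, $\omega'=-1$: the sum is $3\ne0$.)  So for a general $\omega\in\SAS(\triang)$ there is no flat $\PSLC$-cocycle built from $\omega$ alone, and the proposed definition of $\Phi(\omega)$ as the lifting obstruction of $g_\omega$ collapses before it starts.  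This is the main reason the paper does \emph{not} define $\Phi$ via a per-point $\PSLC$-decoration; instead it passes to a $\{\pm1\}$-valued representative $(-1)^\alpha$ with $\alpha$ satisfying the parity conditions (an $\FF_2$-valued even-parity angle structure), feeds $\alpha$ directly into the purely combinatorial rectangle map to get an $\FF_2$-cocycle, and proves compatibility with obstruction classes only for $\omega$ coming from a genuine algebraic solution $z$ --- where a strong combinatorial flattening provides exactly the extra data needed to make the boundary hexagon conditions hold.  The existence and well-definedness of the $\{\pm1\}$ representatives require the generalization of Neumann's Lemma~6.1 proved as Theorem~\ref{generalization-of-6.1}, which has no counterpart in your write-up.

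A second, independent gap is in the surjectivity argument for $\pi_0(\Phi_0)$.  You appeal to surjectivity of the Bockstein $\beta\colon H^1(M,\partial M;S^1)\to H^2(M,\partial M;\FF_2)$ on the grounds that $H^2(M,\partial M;S^1)\cong\Hom\bigl(H_2(M,\partial M;\ZZ),S^1\bigr)$ is torsion-free.  That is false: $H_2(M,\partial M;\ZZ)\cong H^1(M;\ZZ)$ is free abelian of some rank $r\ge 1$ (for $k\ge1$ cusps), so $\Hom\bigl(H_2(M,\partial M;\ZZ),S^1\bigr)\cong(S^1)^r$ has $2$-torsion $\{\pm1\}^r\ne0$, and the relevant map $z\mapsto z^2$ on $H^2(M,\partial M;S^1)$ is not injective; hence $\beta$ need not be onto.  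So this Bockstein route to building representatives does not work, and the counting argument built on it is unjustified.  The paper's version of the surjectivity step (Lemma~\ref{surjectivity-lemma}) instead runs through Theorem~\ref{homogenous-preimage}\eqref{item:rectonto}, showing every class in $H^2(M,\partial M;\FF_2)$ is $\rect(s)$ for a suitable $s\in\mathcal{S}$, via an explicit ``defanning'' of cellular $2$-cocycles on $\triang_{00}$.

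Your instinct about where the geometric content lies --- matching the class of $\omega=z/|z|$ to the lifting obstruction by deforming from $z$ --- is essentially right in spirit, and this is indeed where the paper expends much of its effort (Section~\ref{sec:cocycles}).  But the deformation there is not a moduli-homotopy of flat cocycles as you sketch; it is an exact replacement of log-shapes by flattened log-shapes, and the boundary-hexagon identities are verified matrix by matrix using the $z$-relations.  I would recommend first working out explicitly which $2$-cells of $\triang_{00}$ admit a flat holonomy from phase data alone (rectangles, large hexagons, edge discs yes; boundary hexagons no) --- that single computation already explains why the paper's definition of $\Phi$ must be made on the $\{\pm1\}$ locus, and why the obstruction-class comparison in item~\eqref{item:correct-obstructions} requires strong flattenings rather than a straight-line homotopy.
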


In the last part of the theorem, recall that an algebraic solution of Thurston's gluing equations $z\colon Q(\triang)\rightarrow\CC\setminus\{0,1\}$ determines a corresponding \Sval\ angle structure, see equation~(\ref{corresponding-angle-structure}). 
Further, $z$ also determines a pseudo-developing map $\tilde{M}\rightarrow\mathbb{H}^3$
 where $\tilde{M}$ is the universal cover of the interior of $M$, 
 and, through this, a conjugacy class of representations $\pi_1(M)\rightarrow \PSLC$. (See \cite{yoshida-ideal} and Section~\ref{sec:gluing-equations}.) If, in addition, the solution satisfies the completeness equations, then the constructed representation is boundary-parabolic.

It is a corollary of part~\eqref{obstructions-bijectivity} of this theorem that
the numbers of connected components of $\SAS(\triang)$ and of $\SAS_0(\triang)$
are both finite, independent of $\triang$, and given by the cardinality of the cohomology groups with $\FF_2$-coefficients stated in Theorem~\ref{intro-connected-component-counts}. In particular, both numbers are always powers of $2$.    

This theorem relates the spaces $\SAS(\triang)$ and $\SAS_0(\triang)$
of circle-valued angle structures on an ideal triangulation~$\triang$
to the lifting problem for $\PSLC$ representations.
This relationship mirrors the properties of the `Ptolemy varieties' studied
by Zickert and others \cite{zickert-volrep,garoufalidis-thurston-zickert}.
But it is worth observing that in that theory, the obstruction theory for the lifting problem is used as a starting point for the various definitions of spaces, whereas
here, the spaces of \Sval\ angle structures are defined without reference
to the obstruction classes, and their close relationship with the obstruction theory is an emergent property.

Part \eqref{item:correct-obstructions} of the theorem can be summarised by the following commutative diagram:
\[
     \begin{tikzcd}[row sep=15mm]
         {
             \left\{
                \begin{array}{c}
                     \text{Algebraic solutions}\\
                     z:Q(\triang)\to\CC \\
                     \text{of Thurston's equations}
                 \end{array}
             \right\}
         }
         \arrow{r}
         \arrow[swap]{d}{\displaystyle\omega(\square)=\frac{z(\square)}{|z(\square)|}}
         &
         {
             \left\{
                 \begin{array}{c}
                     \text{Conjugacy classes of} \\
                     \text{boundary-parabolic}\\
                     \PSLC\text{-representations}
                 \end{array}
             \right\}
         }
         \arrow{d}{\displaystyle\Obs_0}
         \\
         {
             \SAS_0(\triang)
         }
         \arrow{r}{\displaystyle\Phi_0}
         &
         {
             H^2(M, \partial M; \FF_2),
         }
     \end{tikzcd}
 \]
where $\Obs_0$ is the map which takes any boundary-parabolic $\PSLC$-representation $\rho$ to the obstruction class to lifting $\rho$ 
to a boundary-unipotent $\SLC$-representation.

\subsection{Application to the Garoufalidis-Kashaev meromorphic 3D-index}

We shall finish this introduction with a few remarks about how this theorem is used in the work \cite{hodgson-kricker-siejakowski} to study the $q=e^{-t}, t\rightarrow 0^+$ asymptotics of the meromorphic 3D-index of Garoufalidis and Kashaev \cite{garoufalidis-kashaev} in the case of a 1-cusped manifold with an ideal triangulation admitting a geometric solution to Thurston's gluing and completeness equations.

Our investigation began by doing extensive numerical computations of the meromorphic 3D-index. We discovered that the asymptotics seemed to take the form of a sum of contributions indexed by a certain collection of boundary-parabolic representations of $\pi_1(M)$, including both complex and real representations. One puzzle was, exactly which representations were appearing? 

The main theorem of this paper, Theorem~\ref{obstruction-theorem}, answered this puzzle as follows. As mentioned above, the meromorphic 3D-index can be re-expressed as an integral over exactly one of the components of $\SAS_0(\triang)$: in fact, the component containing the \Sval\ structure coming from the complete hyperbolic representation.

A stationary phase analysis of the integral leads to the conclusion that the contributions to the asymptotics come from the critical points of the volume functional on this component. At each such critical point, the optimisation theory of Casson-Rivin, together with some conjectures of the current authors concerning the real structures, demonstrates the existence of an algebraic solution of Thurston's equations whose corresponding \Sval\ angle structure is that critical point \cite{futer-gueritaud, hodgson-kricker-siejakowski}. 

Theorem~\ref{obstruction-theorem} lets us characterise those contributing representations topologically. They are precisely the boundary-parabolic representations whose corresponding obstruction class is the same as the obstruction class coming from the complete geometric structure.

\par\medskip\noindent
\textbf{Acknowledgements.}
The computations of the connected components of the spaces of circle-valued angle structures were performed by the student Tracey Chen during a summer research project at the University of Melbourne in January-February 2020 under the supervision of Craig Hodgson. We thank her for her contribution.
This research has been partially supported by
grants DP160104502 and DP190102363 from the Australian Research Council.  The
project also received support through the AcRF Tier~1 grants RG~32/17 and RG~17/23 from the
Singapore Ministry of Education.  Rafa\l\ Siejakowski was supported by grant
\char`#2018/12483-0 of the S\~ao Paulo Research Foundation (FAPESP).  Our work
has benefited from the support of Nanyang Technological University and the
University of Melbourne.  We would like to thank these institutions for their
hospitality.

\section{Preliminaries on ideal triangulations and angle structures}
\label{sec:preliminaries}

In this paper we use the notation $\{\pm 1\}$ to denote 
the multiplicative group with 2 elements. 
We use $\mathbb{Z}_2$ to denote the field with two elements, whose \emph{additive} group is isomorphic to $\{\pm 1\}$ in the obvious way.
This allows us to treat the respective cohomology groups as vector spaces.

\subsection{Thurston's gluing equations}\label{sec:gluing-equations}

Let $M$ be a compact, connected, orientable \nword{3}{manifold} with boundary consisting of $k \ge 1$ tori, and let $\triang$ be an ideal triangulation of the interior of $M$. 
In the Introduction we defined
 the incidence numbers $G(E,\square)$, where $E\in \edges(\triang)$ and $\square\in Q(\triang)$. Following Neumann and Zagier \cite{neumann-zagier}, it is common to organise them
into matrices.
To this end, we fix a numbering of both the tetrahedra and the edges with integers $\{1,\dotsc,N\}$,
so that $\edges(\triang)=\{E_1,\dotsc,E_N\}$.
For every $j\in\{1,\dotsc,N\}$, we choose a distinguished normal
quadrilateral~$\square_j\subset\tet_j$ in the $j$th tetrahedron.
With these choices, the \emph{gluing matrix} of $\triang$ is defined to be the matrix
\begin{align}\label{def-gluing-matrix}
    \mathbf{G} &= [\,G\;|\;G'\;|\;G''\,] \in\Mat_{N\times 3N}(\ZZ),\ \text{where for any}\
    m,n\in\{1,\dotsc,N\}\ \text{we have}\\
    G_{m,n} &= G(E_m,\square_n),\quad
    G'_{m,n} = G(E_m,\square'_n),\quad
    G''_{m,n} = G(E_m,\square''_n).\notag
\end{align}
Here, and throughout the paper, $\Mat_{m\times n}(\mathcal{R})$
denotes the set of $m\times n$ matrices with entries in a ring $\mathcal{R}$.

Recall also that in the Introduction we discussed that we can assign incidence numbers to peripheral curves. 
By numbering the connected components of $\del M=T_1\sqcup\dotsb\sqcup T_k$,
and fixing, for every $1\leq j\leq k$, a pair of peripheral curves~$\mu_j, \lambda_j\subset T_j$
generating $H_1(T_j,\ZZ)$, we obtain the \emph{peripheral gluing matrix}~$\mathbf{G_\del}$,
\begin{align}\label{peripheral-gluing-matrix}
    \mathbf{G_\del} &= [\,G_\del\;|\;G'_\del\;|\;G''_\del\,] \in\Mat_{2k\times 3N}(\ZZ),\ \text{where}\\
        (G_\del)_{2j-1,n} &= G(\mu_j,\square_n),\quad\notag
        (G_\del)_{2j,n} = G(\lambda_j,\square_n),\quad 1\leq j \leq k;
\end{align}
$G'_\del$, $G''_\del$ are defined analogously.

With the above notations, Thurston's hyperbolic gluing equations,
as presented by Neumann and Zagier~\cite{neumann-zagier}, have the form
\begin{equation}\label{edge-consistency-equations}
    \prod_{\square\in Q(\triang)} {z(\square)}^{G(E_m,\square)}
        =\prod_{n=1}^N {z_n}^{G_{m,n}}{z'_n}^{G'_{m,n}}{z''_n}^{G''_{m,n}}
                = 1\ \text{for all}\ m,
\end{equation}
where the shape parameter variables $z \in\CC\setminus\{0,1\}$ satisfy $z_n = z(\square_n)$ and
\begin{equation}\label{z-relations}
    z(\square') = \frac{1}{1-z(\square)}\ \text{for all}\
                  \square\in Q(\triang),
\end{equation}
i.e., $z'_n = 1/(1-z_n)$ and $z''_n = (z_n-1)/z_n$ for all $n$.
After taking logarithms, we may rewrite equation~\eqref{edge-consistency-equations}
in matrix form as
\begin{equation}\label{edge-consistency-general}
    GZ + G'Z' + G''Z'' \in 2\pi i \ZZ^{\edges(\triang)},
\end{equation}
where $Z=(\log z_1, \dotsc,\log z_N)$, $Z'=(\log z'_1,\dotsc,\log z'_N)$,
and $Z''=(\log z''_1,\dotsc,\log z''_N)$.
Of particular importance is the geometric case, where we impose
the equations
\begin{equation}\label{edge-consistency-logarithmic}
    GZ + G'Z' + G''Z'' = 2\pi i\, (1,1,\dotsc,1)^\transp,
\end{equation}
which can be used in the search for hyperbolic structures, cf.~\cite[eq.~4.2.2]{thurston-notes}.

An \emph{algebraic solution} $z=\exp(Z)$ is any solution of
\eqref{edge-consistency-equations} subject to \eqref{z-relations}
with $z(\square)\in\CC\setminus\{0,1\}$ for all $\square\in Q(\triang)$.
Yoshida~\cite{yoshida-ideal} showed
that any algebraic solution
of the gluing equations 
determines a conjugacy
class of a representation~$\rho\colon\pi_1(M)\to\PSLC$.
When the log-parameters~$Z$ additionally satisfy
\begin{equation}\label{completeness-equations-mod-2pi}
    G_\del Z + G'_\del Z' + G''_\del Z'' \in 2\pi i \ZZ^{2k},
\end{equation}
then $\rho$ is boundary-parabolic.

\subsection{Angle structures on ideal triangulations}\label{angle_structures_section}

Our study of circle-valued angle structures will need a number of closely related versions of the concept of angle structure, which we'll discuss in this section. 

\begin{definition}
\label{angle structures}
    A \emph{generalised angle structure} on the ideal triangulation $\triang$
    is a function $\alpha : Q(\triang)\to\RR$ satisfying
    \begin{align}
        \alpha(\square)+\alpha(\square')+\alpha(\square'') \label{AS:tetrah_sum=pi}
            &= \pi\ \text{for every}\ \square\in Q(\triang),\ \text{and}\\
        \sum_{\square\in Q(\triang)} G(E,\square)\,\alpha(\square) \label{AS:edge_sum=2pi}
            &= 2\pi\ \text{for every}\ E\in\edges(\triang).
    \end{align}
    The set of all generalised angle structures on $\triang$ is denoted by $\Areal(\triang)$.
\end{definition}
The study of angle structures began with Casson and Rivin. They studied \emph{strict angle structures}, which are generalised angle structures with the extra condition that $\alpha(\square)>0$ for all $\square\in Q(\triang)$. Angle structures arise from studying the linear part of Thurston's equations; see~\cite{futer-gueritaud} for an exposition.

Since the columns of the matrices $\mathbf{G}$, $\mathbf{G_\del}$ correspond to normal quadrilaterals,
we may simply write \eqref{AS:edge_sum=2pi} as $\mathbf{G}\alpha = (2\pi,\dotsc,2\pi)^\transp$ with
the understanding that the normal quads are ordered in accordance with the ordering of tetrahedra, i.e.,
\begin{equation}\label{quad-ordering}
    Q(\triang) = \{ \square_1, \square_2, \dotsc, \square_N;\;
            \square'_1, \square'_2, \dotsc, \square'_N;\;
            \square''_1, \square''_2, \dotsc, \square''_N\}.
\end{equation}

Observe that the formula~$\omega(\square)=e^{i\alpha(\square)}$ defines
an ``exponential map''~$\exp : \Areal(\triang)\to\SAS(\triang)$.
As $\Areal(\triang)\subset\RR^{Q(\triang)}$ is a real affine space (and is therefore connected),
the image of the exponential map lies entirely in a single connected component of $\SAS(\triang)$,
which we shall call the \emph{geometric component}
and denote $\SASdistinguished(\triang)$.
More generally, all of $\SAS(\triang)$ can be viewed as the set of exponentiated
real-valued \emph{pseudo-angles}, i.e., vectors $\alpha : Q(\triang)\to\RR$ which solve the angle
equations~\eqref{AS:tetrah_sum=pi}, \eqref{AS:edge_sum=2pi} modulo $2\pi\ZZ$.

The maps in our main theorem, Theorem~\ref{obstruction-theorem}, will be constructed using the following important special case of an \Sval\ angle structure.  
\begin{definition}
\label{plusorminusone structures}
    A \emph{$\{\pm 1\}$-valued angle structure} on the ideal triangulation $\triang$ is a \Sval\ angle structure $\omega : Q(\triang)\to S^1$ such that  $\omega(\square)\in \{\pm 1\}$ for all $\square\in Q(\triang)$.
\end{definition}

When $\alpha$ is a generalised angle structure
and $\gamma$ is a peripheral curve, the \emph{angle-holonomy} of
$\alpha$ along $\gamma$ is given by
\begin{equation}\label{def-angle-holonomy}
    \ahol{\alpha}(\gamma)= \sum_{\square\in Q(\triang)} G(\gamma, \square) \alpha(\square)\in\RR.
\end{equation}
We say that $\alpha$ has \emph{vanishing peripheral angle-holonomy},
or that $\alpha$ is \emph{peripherally trivial},
if $\ahol{\alpha}(\mu_j)=\ahol{\alpha}(\lambda_j)=0$ for all $j\in\{1,\dotsc,k\}$,
i.e., when
\(
    \mathbf{G_\del} \alpha = 0
\).
Similarly, we may define the \emph{multiplicative angle-holonomy} 
 of an \Sval\ angle structure
$\omega$ along $\gamma$ by the formula
\begin{equation}\label{peripheral_S1_holonomy_defn}
    \ahol{\omega}(\gamma) =
        \prod_{\square\in Q(\triang)} {\omega(\square)}^{G(\gamma,\square)}
    \in S^1.
\end{equation}

We shall sometimes need to consider spaces of angle structures with
prescribed peripheral angle-holonomies. 
For simplicity, suppose that $M$ has only one torus boundary component,
which we orient in accordance with the orientation of $M$, using the convention of
``outward-pointing normal vector in the last position''.
We may further assume that the peripheral curves~$\mu$, $\lambda$ have
intersection number~$\intersection(\mu,\lambda)=1$ with respect to this orientation.
For any $(x,y)\in\RR^2$, we define
\begin{equation}\label{def-Axy}
    \Areal_{(x,y)}(\triang) = \{\alpha\in\Areal(\triang)\ |\
        \bigl(\ahol{\alpha}(\mu), \ahol{\alpha}(\lambda)\bigr)=(x,y)\}.
\end{equation}
Likewise, for two elements $\xi,\eta\in S^1$ we shall consider
\Sval\ angle structures with prescribed multiplicative angle-holonomy,
\begin{equation}\label{def-SASxy}
    \SAS_{(\xi,\eta)}(\triang) = \{\omega\in\SAS(\triang)\ |\
    \bigl(\ahol{\omega}(\mu), \ahol{\omega}(\lambda)\bigr)=(\xi,\eta)\}.
\end{equation}
An important special case is presented by the spaces of \emph{peripherally trivial}
angle structures.
In order to lighten the notation, we set
\begin{equation*}
    \Areal_0(\triang) := \Areal_{(0,0)}(\triang),
    \qquad
    \SAS_0(\triang) := \SAS_{(1,1)}(\triang).
\end{equation*}
Note that the space $\SAS_0(\triang)$ is independent of the choice of the curves~$\mu$ and~$\lambda$.

\begin{definition}\label{def:Zvals}
\begin{enumerate}[(i)]
\item
An \emph{integer-valued angle structure} on the triangulation $\triang$ is a
vector $f\in\ZZ^{Q(\triang)}$ such that
$\pi f \in \Areal(\triang)$, where $\pi f$ denotes the function which sends $\square$ to $\pi$ times $f(\square)$.
\item
An integer-valued angle structure~$f$ is \emph{peripherally trivial} if $\pi f\in\Areal_0(\triang)$.
\end{enumerate}
\end{definition}
By a result of Neumann~\cite[Theorem~2]{neumann1990}, an ideal triangulation
of a manifold with toroidal boundary always admits a peripherally trivial integer-valued angle structure.
As a consequence, the set $\SAS_0(\triang)$ is always non-empty.
We provide a generalisation of Neumann's theorem in Theorem~\ref{generalization-of-6.1} below.
We refer to \cite{luo-volume,garoufalidis-hodgson-hoffman-rubinstein} for more details on angle
structures of various types.

\subsection{Tangential angle structures}\label{subsec:TAS}
\begin{definition}
The space of \emph{tangential angle structures}~$\TAS(\triang)\subset\RR^{Q(\triang)}$ is the vector
space consisting of vectors~$w$ satisfying the equations
\begin{align}
    w(\square) + w(\square') + w(\square'')&=0,\quad \square\in Q(\triang),\label{TAS:tet_sum=0}\\
    \label{TAS:edge_sum=0}
    \mathbf{G}\,w&=0.
\end{align}
Moreover, define $\TAS_0(\triang)\subset\TAS(\triang)$ to be the subspace cut out by the equations
\(
    \mathbf{G_\del}\,w = 0
\).
\end{definition}
Since the above equations are homogeneous versions of the angle structure
equations \eqref{AS:tetrah_sum=pi}--\eqref{AS:edge_sum=2pi},
$\TAS(\triang)$ is canonically identified with the tangent space to
the affine subspace $\Areal(\triang)\subset\RR^{Q(\triang)}$ at every one of its points,
and by using the exponential map, it is also identified
with the tangent space to $\SAS(\triang)$.
By the same reasoning, $\TAS_0(\triang)$ is the tangent space to the subspaces
$\Areal_{(x,\,y)}(\triang)$ and $\SAS_{(\xi,\,\eta)}(\triang)$ with fixed
peripheral angle-holonomy; see~\cite[p.~307]{luo-volume} for more details.

Based on the treatment of Futer and Gu\'eritaud~\cite[\S4]{futer-gueritaud},
we introduce explicit bases for $\TAS(\triang)$ and $\TAS_0(\triang)$.
For every $j\in\{1,\dotsc,N\}$, we define the \emph{leading--trailing deformation about
the edge}~$E_j\in\edges(\triang)$
to be the vector $l_j\in\ZZ^{Q(\triang)}$ given by the formula
\begin{equation}\label{definition-leading-trailing}
    l_j(\square) = G(E_j,\square'') - G(E_j,\square').
\end{equation}
The vectors $\{l_1,\dotsc,l_N\}$ span $\TAS_0(\triang)$,
cf.~\cite[Proposition~4.6]{futer-gueritaud}.
Using the matrix notation of \eqref{def-gluing-matrix}, we may define
\begin{equation}\label{definition-L}
    \begin{aligned}
        \mathbf{L} &= [\,L\;|\;L'\;|\;L''\,] \in\Mat_{N\times 3N}(\ZZ),\ \text{where}\\
        L &= G''-G',\quad L' = G-G'', \quad L'' = G'-G.
    \end{aligned}
\end{equation}
With this notation, the vector $l_j$ can be understood as the \nword{j}{th} row of $\mathbf{L}$.
Note that we always have $-l_N = l_1+\dotsb+l_{N-1}$.
In the case of a single boundary component,
this is the only nontrivial linear relation
among the vectors $l_j$, so that
after removing the last vector ($j=N$), the set~$\{l_1,\dotsc,l_{N-1}\}$ becomes
a basis of $\TAS_0(\triang)$, cf. \cite[\S4]{futer-gueritaud}.
In this case, it is convenient to define
\begin{equation}\label{definition-L_star}
    \begin{aligned}
    \mathbf{L_*} &= [\,L_*\;|\;L'_*\;|\;L''_*\,] \in\Mat_{(N-1)\times 3N}(\ZZ),\\
    [\mathbf{L_*}]_{j,\square} &= [\mathbf{L}]_{j,\square},\quad 1\leq j\leq N-1,
    \end{aligned}
\end{equation}
so that the rows of $\mathbf{L_*}$ form an explicit ordered basis of $\TAS_0(\triang)$.

More generally, Futer and Gu\'eritaud~\cite{futer-gueritaud} consider
leading-trailing deformations along oriented peripheral curves in general position with
respect to $\triang$.
When $\gamma$ is such a peripheral curve, we may define
\begin{equation}\label{peripheral-LTD}
    l_\gamma(\square) = G(\gamma,\square'') - G(\gamma,\square').
\end{equation}
It suffices for our purposes to consider fixed meridian-longitude
pairs~$(\mu_j,\lambda_j)$, $1\leq j \leq k$ and the associated
leading-trailing deformation vectors $\{l_{\mu_j}, l_{\lambda_j}\}_{j=1}^k$.
These vectors arise as the rows of the matrix
\begin{equation}\label{def-L-peripheral}
    \begin{aligned}
    \mathbf{L}_\del &= [\,L_\del\;|\;L'_\del\;|\;L''_\del\,]\in\Mat_{2k\times 3N}(\ZZ),\\
    L_\del &= G''_\del - G'_\del,\quad
    L'_\del = G_\del - G''_\del,\quad
    L''_\del = G'_\del - G_\del.
    \end{aligned}
\end{equation}
In this notation, we have $\TAS_0(\triang)=\mathbf{L}\mkern-2mu^\transp(\RR^N)=\mathbf{L}_*^\transp(\RR^{N-1})$ and
$\TAS(\triang) = \TAS_0(\triang) + \mathbf{L}^\transp_\del(\RR^{2k})$; cf.~\cite[Proposition~4.6]{futer-gueritaud}.
Moreover, the dimension of $\TAS_0(\triang)$ is always $N-k$,
whereas the dimension of $\TAS(\triang)$ is $N+k$.
We refer to \cite[Proposition~3.2]{futer-gueritaud} and \cite[Proposition~2.6]{luo-volume} for more
details and proofs.

\begin{remark}\label{TAS-and-connected-components}
As a consequence of the properties of leading-trailing deformations discussed above,
two \Sval\ angle structures~$\omega_1$ and $\omega_2$ lie in the same connected component of
$\SAS_0(\triang)$ if and only if there exists a
vector~$r\in\Imag\mathbf{L}^\transp=\mathbf{L}^\transp(\RR^N)$ such that
\begin{equation}\label{TAS-connected-SAS}
    \omega_2(\square) = \omega_1(\square) e^{ir(\square)}\ \text{for all}\ \square\in Q(\triang).
\end{equation}
Likewise, $\omega_1$ and $\omega_2$ belong to the same connected component of $\SAS(\triang)$
if and only if a vector~$r$ satisfying \eqref{TAS-connected-SAS}
can be found in the larger space~$\Imag\mathbf{L}^\transp+\Imag\mathbf{L}_\del^\transp$.
\end{remark}


\section{\texorpdfstring{Spaces of  $S^1$-valued angle structures: obstruction classes and components}{Spaces of circle-valued angle structures: obstruction classes and components}}
\label{sec:obstructions}
In this section we turn to our main theorem, Theorem~\ref{obstruction-theorem}. 
The presentation of the proof is organised as follows.

The foundation of our proof is Neumann's work on the combinatorics of ideal triangulations. We begin by reviewing in Section~\ref{neumann-review} some key definitions and results from \cite{neumann1990}.  

Section~\ref{pseudoangle-existence} is devoted to stating and proving Theorem~\ref{generalization-of-6.1}, which is a key technical preliminary in our theory. This theorem is a generalisation of a theorem from \cite{neumann1990} which shows the existence on any ideal triangulation $\triang$ of integer-valued angle structures satisfying a certain parity condition. Here we generalize this  statement to prove the existence of integer-valued \emph{pseudo-angle} structures satisfying the parity condition. 
These generalise angle-structures by allowing the right-hand sides of the gluing and completeness equations to be prescribed more generally.

In Section~\ref{truncated-triangulations} we turn to the main task of the proof, which is to build maps from \Sval\ angle structures to cohomology classes. We start by introducing the cell complexes called \emph{doubly-truncated ideal triangulations} that we will be working with. Then we show how to use them to define 
the \emph{rectangle map}. This map, denoted `$\rect$', takes a function in $\FF_2^{Q(\triang)}$ satisfying the mod-2 gluing equations to a 2-cocycle on the doubly-truncated ideal triangulation. 

In Section~\ref{obstruction-map-defn}, we use the map `$\rect$' to define the key maps $\Phi_0$ and $\Phi$ which map an \Sval\ angle structure to a cohomology class. 

In Section~\ref{proof-of-obstruction-theorem} we prove parts (i) and (ii) of Theorem~\ref{obstruction-theorem}. These are the parts of the theorem concerning the bijectivity of the maps $\pi_0(\Phi)$ and $\pi_0(\Phi_0)$.

In Section~\ref{sec:cocycles} we prove part (iii) of Theorem~\ref{obstruction-theorem}, which is the compatibility statement for representations arising from solutions of Thurston's equations. To do this, we need to start with an algebraic solution to Thurston's equations, then from it explicitly compute the corresponding obstruction cocycle.
We use Neumann's concept of a strong flattening to build an explicit $\SLC$ labelling of the edges of the complex which projects to the corresponding $\PSLC$ representation and is in addition boundary-compatible, according to Definition~\ref{def:nice-cocycle}. 

\subsection{Combinatorics of 3-manifold triangulations following Neumann}\label{neumann-review}
In~\cite{neumann1990}, Neumann explored the relationship between the gluing equations
for an ideal triangulation and the homology and cohomology groups of the underlying manifold.
Here we will review the concepts and results from \cite[Sections~4--6]{neumann1990}
that we will need in order to prove Theorem~\ref{obstruction-theorem}.

\begin{definition}\label{definition-p}
Let $\mathcal{V}_\infty(\triang)$ be the set of ideal vertices of the triangulation $\triang$.
We define the map
\[
    p:\ZZ^{\edges(\triang)} \to \ZZ^{\mathcal{V}_\infty(\triang)},\qquad
    \bigl(p(x)\bigr)(V) = \sum_{E\in\edges(\triang)} \inc(E, V) \cdot x(E),
\]
where $\inc(E, V)\in\{0,1,2\}$ is the number of ends of $E$ incident to $V$.
\end{definition}

Let $\hat{\triang}$ be the simplicial complex obtained by filling in the ideal
vertices of $\triang$.
The mod~$2$ reduction of a vector in $\Ker p$ is easily seen to determine
a simplicial \nword{1}{cycle} in $\hat{\triang}$ with \nword{\FF_2}{coefficients}.
Neumann proved the following proposition as a part of Theorem~4.2 in \cite{neumann1990}.
\begin{proposition}[Neumann]\label{computing-homology-of-hat}
    Let $p$ be the map of Definition~\ref{definition-p}.
    The above identification of elements of $\Ker p$ with simplicial cycles
    induces an isomorphism $H_1(\hat{\triang};\FF_2) \cong \Ker p / \Imag_\ZZ\mathbf{L}$.\footnote{For $A\in\Mat_{m\times n}(\ZZ)$, we denote
by $\Imag_\ZZ A$ the lattice $A(\ZZ^n)\subset\RR^m$, and by $\Ker_\ZZ A$
the lattice $\ZZ^n\cap \Ker A\subset\RR^n$.}
\end{proposition}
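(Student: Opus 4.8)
The plan is to exhibit an explicit isomorphism at the level of simplicial chains and then verify it descends to homology. First I would describe the chain map concretely. Given $x\in\Ker p\subset\ZZ^{\edges(\triang)}$, its mod~$2$ reduction $\bar{x}\in\FF_2^{\edges(\triang)}$ assigns a coefficient to each edge of $\hat\triang$ (recall $\edges(\triang)$ is in bijection with the $1$-cells of $\hat\triang$ once the ideal vertices are filled in). The condition $x\in\Ker p$ says that for every ideal vertex $V$, the weighted count $\sum_E \inc(E,V)\,x(E)$ vanishes over $\ZZ$; reducing mod~$2$ and noting that $\inc(E,V)$ counts the ends of $E$ landing at $V$, this is precisely the statement that $\partial \bar{x}=0$ in $C_0(\hat\triang;\FF_2)$, i.e.\ $\bar{x}$ is a simplicial $1$-cycle. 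So reduction mod~$2$ gives a well-defined homomorphism $\Ker p \to Z_1(\hat\triang;\FF_2)$, where $Z_1$ denotes simplicial $1$-cycles.

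Next I would identify the image and kernel of this homomorphism. For surjectivity onto $Z_1(\hat\triang;\FF_2)$: any $\FF_2$-valued $1$-cycle on $\hat\triang$ is represented by some $\{0,1\}$-vector $\bar{x}$; lifting each coordinate to $0$ or $1$ in $\ZZ$ gives $x\in\ZZ^{\edges(\triang)}$ whose reduction is $\bar{x}$, and $\partial\bar{x}=0$ means $p(x)(V)$ is even for each $V$, but $p(x)$ need not vanish over $\ZZ$. To fix this one must adjust $x$ by a vector in $\ker$ of the mod-$2$ reduction that corrects the even integers $p(x)(V)$ down to $0$; here is where I would use structural input about the map $p$ — e.g.\ that the image of $p$ is contained in (a suitable index-two refinement related to) the kernel of the sum map, and that one has enough freedom by adding even multiples of individual coordinates. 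Then for the kernel of reduction composed with the passage to homology: $\bar{x}=0$ in $H_1(\hat\triang;\FF_2)$ iff $\bar{x}=\partial_2(c)$ for a $2$-chain $c$; the $2$-cells of $\hat\triang$ are the triangular faces of the tetrahedra, and the boundary of a face, read through the edge-identifications of $\triang$, is exactly captured (mod~$2$) by the corresponding row-combination of the leading-trailing matrix $\mathbf{L}$ — this is the combinatorial content I would extract from Neumann's construction. Concretely, I expect that the mod-$2$ reduction of $\Imag_\ZZ\mathbf{L}$ is precisely $B_1(\hat\triang;\FF_2)+(\text{mod-2 reductions of }\Ker p\text{ that already died})$, so that reduction induces an isomorphism $\Ker p/\Imag_\ZZ\mathbf{L} \isomorphically H_1(\hat\triang;\FF_2)$.

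More carefully, I would organise the verification as a short diagram chase: the composite $\Ker p \to Z_1(\hat\triang;\FF_2)\to H_1(\hat\triang;\FF_2)$ is surjective (by the lifting argument above), and I would show its kernel is exactly $\Imag_\ZZ\mathbf{L}$. The inclusion $\Imag_\ZZ\mathbf{L}\subseteq \ker$ amounts to: each row $l_j$ of $\mathbf{L}$ lies in $\Ker p$ (a direct check from the definitions $l_j(\square)=G(E_j,\square'')-G(E_j,\square')$ and the combinatorics of how tetrahedral edges meet ideal vertices), and its mod-$2$ reduction bounds in $\hat\triang$ (it is $\partial_2$ of the sum of the faces meeting $E_j$, or a similarly explicit $2$-chain). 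The reverse inclusion $\ker\subseteq\Imag_\ZZ\mathbf{L}$ is the substantive direction: if $\bar{x}=\partial_2 c$, lift $c$ to an integral $2$-chain, compute $\partial_2$ of that lift inside $\ZZ^{\edges(\triang)}$, and show the discrepancy with $x$ lies in $\Imag_\ZZ\mathbf{L}$ — again using that the relevant face-boundary relations are integral combinations of the $l_j$.

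\textbf{Expected main obstacle.} Since the proposition is attributed to Neumann and only the relevant half of his Theorem~4.2 is needed here, the cleanest route is simply to cite \cite[Theorem~4.2]{neumann1990}; if a self-contained argument is wanted, the hard part is the bookkeeping identifying the mod-$2$ reduction of the integral lattice $\Imag_\ZZ\mathbf{L}$ with the simplicial $1$-boundaries $B_1(\hat\triang;\FF_2)$ — i.e.\ matching the leading-trailing deformation vectors, defined purely in terms of the incidence numbers $G(E,\square)$, against the simplicial boundary map of $\hat\triang$, keeping track of the edge-identifications induced by the face pairings of $\triang$. Everything else (the cycle condition, surjectivity by lifting $0/1$ vectors) is routine once that dictionary is in place.
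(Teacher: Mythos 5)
The statement carries no proof in the paper: it is attributed to Neumann and simply cited as part of Theorem~4.2 of \cite{neumann1990}. You correctly identify citation as the cleanest route, which matches the paper exactly. The rest of your sketch is a reasonable outline of what a self-contained argument would need, but as written it contains two substantive inaccuracies beyond the gaps you flag.

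First, the surjectivity step. Starting from a $\{0,1\}$-lift $x$ of a mod-$2$ cycle $\bar x$, you propose to push $p(x)\in 2\ZZ^{\mathcal V_\infty(\triang)}$ to zero by ``adjusting $x$ by a vector in $\ker$ of the mod-$2$ reduction,'' i.e.\ by adding integral combinations of the vectors $2e_E$. This cannot work in general: those adjustments only change $p(x)$ by elements of $2\,\Imag_\ZZ p$, and there is no reason the particular even vector $p(x)$ should lie there. What must be allowed, and what does repair the argument, is adjustment by integral $2$-chains $\partial_2 F$ (for $F$ a triangular face of $\hat\triang$), which changes the cycle $\bar x$ only by a boundary and hence preserves the homology class. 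One then checks that $p(\partial_2 F)(V)$ is twice the number of corners of $F$ mapping to $V$ (sum $6$), while $p(2e_E)$ has coordinate sum $4$; together, for a connected complex, these generate all of $2\ZZ^{\mathcal V_\infty(\triang)}$, so $p(x)$ can always be cancelled. Surjectivity onto $H_1(\hat\triang;\FF_2)$ follows, but only once both kinds of adjustment are permitted.

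Second, the dictionary between $\Imag_\ZZ\mathbf L$ and simplicial boundaries. The generators of $\Imag_\ZZ\mathbf L\subset\ZZ^{\edges(\triang)}$ are the \emph{columns} of $\mathbf L$, one per quad $\square\subset\tet$ (your ``row $l_j$'' lives in $\ZZ^{Q(\triang)}$, not in $\ZZ^{\edges(\triang)}$, so it cannot lie in $\Ker p$). Modulo~$2$, the $\square$-column equals $\partial_2 F_1+\partial_2 F_2$ for a pair of faces $F_1,F_2$ of $\tet$ meeting along a tetrahedral edge facing $\square$ --- the sum of \emph{two} face boundaries, not one. Consequently the mod-$2$ reduction of $\Imag_\ZZ\mathbf L$ is $\partial_2(\Ker\varepsilon)$, where $\varepsilon\colon C_2(\hat\triang;\FF_2)\to\FF_2$ is the augmentation, and this can have index~$2$ in $B_1(\hat\triang;\FF_2)$. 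So the inclusion $\Ker\bigl(\Ker p\to H_1(\hat\triang;\FF_2)\bigr)\subseteq\Imag_\ZZ\mathbf L$ is not an automatic consequence of the mod-$2$ bookkeeping; in particular the integral containment $2\,\Ker p\subseteq\Imag_\ZZ\mathbf L$ (equivalently, that the quotient is $2$-torsion) is essentially equivalent to the conclusion and is where the real content of Neumann's argument lies. A self-contained proof would need to reproduce that part of his chain-complex analysis rather than treat it as routine bookkeeping.
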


Recall from \eqref{AS:tetrah_sum=pi}--\eqref{AS:edge_sum=2pi} that a generalised
angle structure~$\alpha$ is a real solution to the linear
system~$\mathbf{A}\alpha = \pi(\mathbf{1},\mathbf{2})^\transp$,
where $\mathbf{1} = (1,1,\dotsc,1)$, $\mathbf{2} = 2\cdot\mathbf{1}$ and
\begin{equation}\label{def-matrix-of-angle-eqs}
    \mathbf{A} =
    \begin{bmatrix}
        I & I  & I \\
        G & G' & G''
    \end{bmatrix},
\end{equation}
where $I$ is the $N\times N$ identity matrix.
The equations~\eqref{TAS:tet_sum=0}--\eqref{TAS:edge_sum=0} say
that every tangential angle structure~$\mathbf{w}\in\TAS(\triang)$ satisfies
$\mathbf{Aw}=0$, which implies $\mathbf{A}\mathbf{L}^\transp=0$ and consequently
$\Imag_\ZZ\mathbf{A}^\transp\subset\Ker_\ZZ\mathbf{L}$.
Let $\theta$ be an (oriented) peripheral curve in normal position with respect to 
the triangulation of $\partial M$ induced by $\triang$. 
As in Section~\ref{sec:preliminaries}, associated to $\theta$ is the
vector~$\bigl(G(\theta,\square)\bigr)_{\square\in Q(\triang)}\in\ZZ^{Q(\triang)}$
containing the coefficients of the completeness equation along $\theta$.
Now, the equality $\mathbf{G}_\partial\mathbf{L}^\transp=0$
implies, by duality, that
$\bigl(G(\theta,\square)\bigr)_{\square\in Q(\triang)}\in\Ker_\ZZ\mathbf{L}$.
Note that the gluing coefficients of any other curve homologous to $\theta$ will
differ from those of $\theta$ by an element of $\Imag_\ZZ\mathbf{A}^\transp\subset\ZZ^{Q(\triang)}$;
cf. also Remark~\ref{invariance-of-parity} below.
In this way, we may define the following homomorphism
\begin{align*}
    g: H_1(\partial M; \ZZ) &\to \Ker_\ZZ\mathbf{L}/\Imag_\ZZ\mathbf{A}^\transp\\
    g([\theta]) &= \bigl(G(\theta,\square)\bigr)_{\square\in Q(\triang)} + \Imag_\ZZ\mathbf{A}^\transp.
\end{align*}

In~\cite[\S5]{neumann1990}, Neumann also generalised the above construction to arbitrary curves in the
interior of $M$, but this generalisation only works over $\FF_2$.
Let $\theta$ be any closed unoriented path in $M$ in general position
with respect to $\triang$. 
When $\theta$ passes through a tetrahedron $\tet$, there is a well-defined
tetrahedral edge $e$ of $\tet$ common to the face where $\theta$ enters $\tet$ and
the face where $\theta$ leaves.
We may build a vector in $\FF_2^{Q(\triang)}$ by starting with $0$ and then,
every time $\theta$ crosses a tetrahedron, adding $1\in\FF_2$ to the
\nword{\square}{coordinate} of the vector, corresponding to the quad $\square$ facing $e$.
The sum of all such contributions along $\theta$ is the resulting vector
$\bigl(\square\mapsto G_2(\theta,\square)\bigr)\in\FF_2^{Q(\triang)}$.
In a similar way as before, we obtain a map
\begin{align*}
    g_2: H_1(M; \FF_2) &\to \Ker\mathbf{L}_2/\Imag\mathbf{A}_2^\transp\\
    g_2([\theta]) &= \bigl(G_2(\theta,\square)\bigr)_{\square\in Q(\triang)} + \Imag\mathbf{A}_2^\transp,
\end{align*}
where $\mathbf{L}_2\in\Mat_{N\times3N}(\FF_2)$ and $\mathbf{A}_2\in\Mat_{2N\times3N}(\FF_2)$ 
are the mod~$2$ reductions of the matrices $\mathbf{L}$ and $\mathbf{A}$, respectively.

In Section~5 of \cite{neumann1990}, Neumann proceeds to describe the relationship
between the cohomology group~$H^1(M;\FF_2)$ and the gluing equations.
To compute this cohomology group, we shall use the \nword{2}{complex}~$X$ dual
to the triangulation $\triang$, so that $H^1(M;\FF_2)\cong Z^1(X;\FF_2)/B^1(X;\FF_2)$.
For a face $F$ of the triangulation $\triang$, let $\chi_F\in C^1(X;\FF_2)$ be the cochain
taking the value~$1\in\FF_2$ on the \nword{1}{cell} dual to $F$ and vanishing everywhere else.
Given a tetrahedral edge $e$ in a tetrahedron $\tet$, we wish to assign to $e$ the sum
$\chi_{F_1}+\chi_{F_2}$ where $F_1$ and $F_2$ are the two faces of $\tet$ which meet at $e$
(see Figure~\ref{fig:theta}).
Note that if $e'$ is the edge of $\tet$ opposite to $e$, then the \nword{1}{cochain}
assigned to $e'$ is $\chi_{F_3}+\chi_{F_4}$, where $\{F_1,F_2,F_3,F_4\}$ is the complete
list of the four faces of $\tet$.
\begin{figure}
    \centering
    \begin{tikzpicture}
        \node[anchor=south west,inner sep=0] (image) at (0,0)
            {\includegraphics[width=0.3\columnwidth]{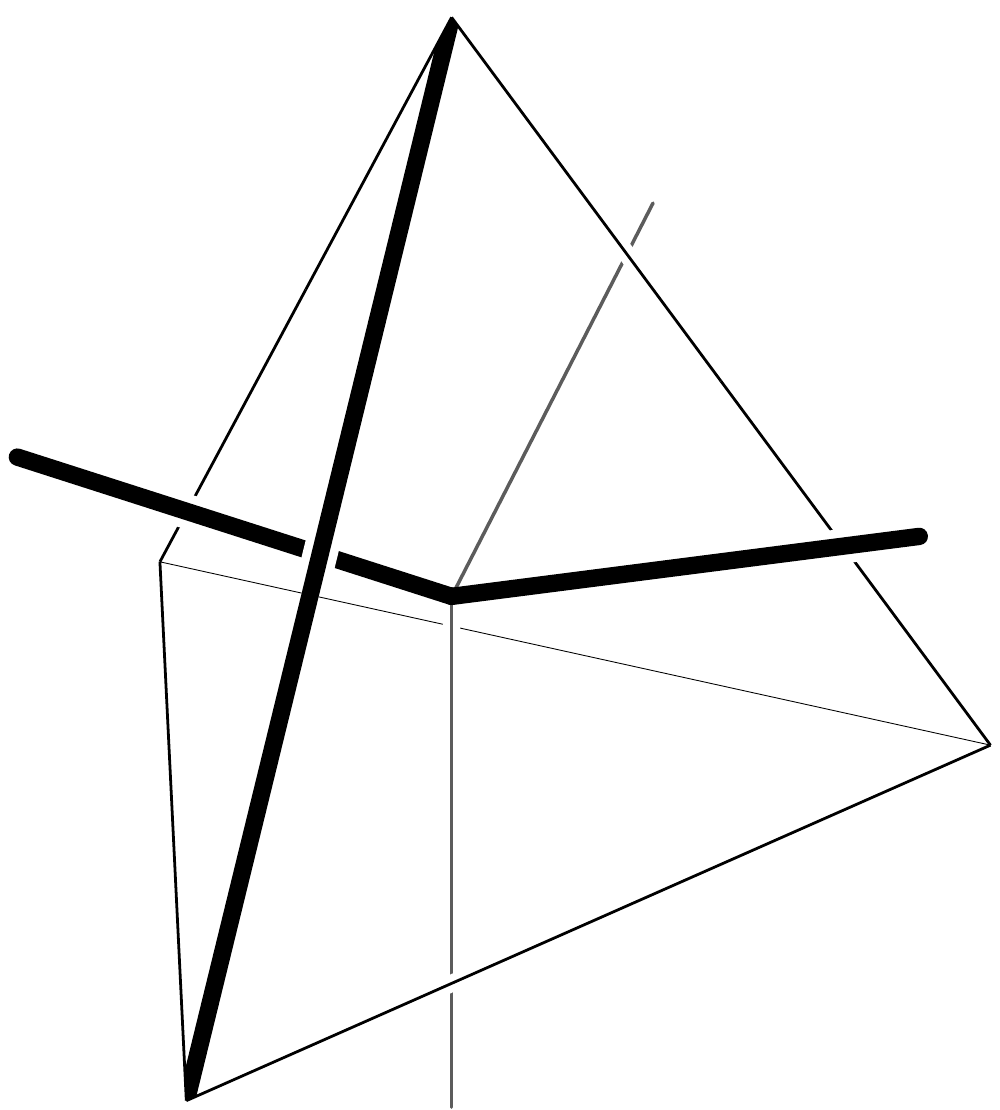}};
        \begin{scope}[x={(image.south east)},y={(image.north west)}]
            \node[anchor=north west] at (0.25, 0.3)  {$e$};
        \end{scope}
    \end{tikzpicture}
    \caption{To a tetrahedral edge~$e$ (bolded) we associate an \nword{\FF_2}{cochain}
    $\chi_{F_1}+\chi_{F_2}$    taking the value $1\in\FF_2$ on the bolded dual \nword{1}{cells} and
    the value $0\in\FF_2$ on the remaining \nword{1}{cells}.}\label{fig:theta}
\end{figure}
Since $\chi_{F_1}+\chi_{F_2}$ differs from $\chi_{F_3}+\chi_{F_4}$ by a coboundary (the image
under $\delta^0$ of the \nword{0}{cell} dual to $\tet$), this construction gives a well-defined map
\[
    \gamma'_2\iota: \ZZ^{Q(\triang)} \to C^1(X;\FF_2)/B^1(X;\FF_2),
\]
in Neumann's original notation.
In order to turn $\gamma'_2\iota$ into a map taking values in $H^1(M;\FF_2)$, we must impose cocycle
conditions which come from the \nword{2}{cells} of $X$.
Note that for an element $\mathbf{w}\in\ZZ^{Q(\triang)}$, the contribution of
$\delta^1\bigl(\gamma'_2\iota(\mathbf{w})\bigr)$ to the \nword{2}{cell} of $X$ dual to an edge
$E\in\edges(\triang)$ is
$\sum_{\square\in Q(\triang)} \left[l_E(\square) \mathbf{w}(\square)\right]_2$,
where $l_E$ is the leading-trailing deformation about $E$; cf.
also eq.~\eqref{Kronecker-pairing-gamma'2} below.
Therefore, for all $\mathbf{w}\in\Ker_\ZZ\mathbf{L}$,
$\delta^1\bigl(\gamma'_2\iota(\mathbf{w})\bigr)=0$.
Moreover, it is evident that for any $\mathbf{w}\in\Imag_\ZZ\mathbf{A}^\transp$
we have $\gamma'_2\iota(\mathbf{w})=0$.
In this way, $\gamma'_2\iota$ descends to a well-defined map
\begin{equation}
    \gamma'_2\iota: \Ker_\ZZ\mathbf{L}/\Imag_\ZZ\mathbf{A}^\transp \to H^1(M;\FF_2).
\end{equation}
The above construction can also be fully carried out over $\FF_2$, yielding an analogous map
\begin{equation}\label{gamma'_2}
    \gamma'_2: \Ker\mathbf{L}_2/\Imag\mathbf{A}_2^\transp \to H^1(M;\FF_2).
\end{equation}

\begin{proposition}[Neumann]\label{neumanns-exact-sequences}
The following sequences are exact:
\begin{align}\label{Neumanns-exact-mod2}
0\to H_1(M;\FF_2) \xrightarrow{g_2} &\Ker\mathbf{L}_2/\Imag\mathbf{A}_2^\transp
\xrightarrow{\gamma'_2} H^1(M;\FF_2) \to 0,\quad\text{and}\\
0\to H_1(\partial M;\ZZ) \xrightarrow{g} & \Ker\mathbf{L}/\Imag\mathbf{A}^\transp
\xrightarrow{\gamma'_2\iota} H^1(M;\FF_2) \to 0.\label{Neumanns-exact-integer}
\end{align}
\end{proposition}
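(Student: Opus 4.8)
These two exact sequences are, in the matrix notation of this section, the content of Neumann's Theorem~4.2 and the discussion in~\S5 of \cite{neumann1990}, and the plan is to verify them directly. For each sequence one must check four things: that the maps are well defined; that the composite of the two maps is zero; that the right-hand map is surjective; and that the sequence is exact in the middle (equivalently, the left-hand map is injective with image equal to the kernel of the right-hand map). The first two are essentially handled by the discussion preceding the statement: $g$ and $g_2$ take values in $\Ker\mathbf{L}/\Imag\mathbf{A}^\transp$ (respectively its mod-$2$ analogue) because $\mathbf{G}_\partial\mathbf{L}^\transp=0$ and the crossing vectors are well defined modulo $\Imag\mathbf{A}^\transp$, while $\gamma'_2$ and $\gamma'_2\iota$ descend to $H^1(M;\FF_2)$ because the $E$-component of $\delta^1(\gamma'_2\iota(\mathbf{w}))$ is $\langle l_E,\mathbf{w}\rangle\bmod 2$, which vanishes on $\Ker\mathbf{L}_2$, and because $\gamma'_2\iota$ annihilates $\Imag\mathbf{A}_2^\transp$. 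For the vanishing of the composite I would argue by telescoping: if a loop $\theta$ passes cyclically through tetrahedra that are separated, in order, by the faces $F_1,\dots,F_m,F_1$, then in the tetrahedron lying between $F_i$ and $F_{i+1}$ it singles out the quad facing the edge common to $F_i$ and $F_{i+1}$ (which is exactly the edge meeting those two faces), whose $\gamma'_2\iota$-cochain may be represented by $\chi_{F_i}+\chi_{F_{i+1}}$; reading indices cyclically, $\sum_i(\chi_{F_i}+\chi_{F_{i+1}})=0$, so $\gamma'_2\iota(g([\theta]))$ vanishes already at the level of cochains.

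For surjectivity of $\gamma'_2$ (and of $\gamma'_2\iota$) the plan is to represent a class of $H^1(M;\FF_2)\cong Z^1(X;\FF_2)/B^1(X;\FF_2)$ by a $1$-cocycle $z$, i.e.\ an $\FF_2$-labelling of the faces of $\triang$ with $\delta^1 z=0$. Summing the cocycle relation over all edges and using that every face is a triangle, hence incident to three edges, one gets $\sum_F z(F)=0$ modulo~$2$, so $z$ has even total weight. On the other hand, within a single tetrahedron the three cochains that $\gamma'_2\iota$ assigns to its quads are, modulo $B^1$, $\chi_{F_1}+\chi_{F_2}$, $\chi_{F_1}+\chi_{F_3}$ and $\chi_{F_1}+\chi_{F_4}$, which span the even-weight subspace of $\FF_2^{\{F_1,F_2,F_3,F_4\}}$; since $M$ is connected, the graph whose vertices are the faces of $\triang$ and whose edges join two faces lying in a common tetrahedron is connected, so the even-weight cochain $z$ lies in the image of $\gamma'_2\iota\colon\FF_2^{Q(\triang)}\to C^1(X;\FF_2)/B^1(X;\FF_2)$. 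Finally, if $z=\gamma'_2\iota(\mathbf{w})$ and $z$ is a cocycle, then $0=\delta^1 z$ forces $\langle l_E,\mathbf{w}\rangle\equiv 0$ for every $E$, i.e.\ $\mathbf{w}\in\Ker\mathbf{L}_2$, so that $z$ lies in the image of $\gamma'_2$. The same argument works verbatim over $\ZZ$.

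It remains to treat exactness in the middle and injectivity of $g$ and $g_2$. The telescoping computation already shows $\Imag g_2\subseteq\Ker\gamma'_2$, so what is left is the reverse inclusion and the injectivity statements. For injectivity of $g$ I would use that $G(\theta,\cdot)\in\Imag_\ZZ\mathbf{A}^\transp$ forces the peripheral angle-holonomy of every tangential angle structure around $\theta$ to vanish, and that pairing $G(\theta,\cdot)$ with the peripheral leading--trailing vectors $l_{\mu_j}$ and $l_{\lambda_j}$ recovers, up to a universal sign and a factor of~$2$, the intersection numbers of $\theta$ with the $\mu_j$ and $\lambda_j$ (after Futer--Gu\'eritaud); since these curves form a basis of $H_1(\partial M;\ZZ)$ and the intersection form on each boundary torus is unimodular, this yields $[\theta]=0$. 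For the reverse inclusion $\Ker\gamma'_2\subseteq\Imag g_2$: given $\mathbf{w}\in\Ker\mathbf{L}_2$ with $\gamma'_2\iota(\mathbf{w})\in B^1(X;\FF_2)$, I would first normalise $\mathbf{w}$ modulo the tetrahedron vectors $\chi_{\square}+\chi_{\square'}+\chi_{\square''}\in\Imag\mathbf{A}_2^\transp$ so that it singles out at most one quad in each tetrahedron, and then use a chosen coboundary $\gamma'_2\iota(\mathbf{w})=\delta^0 c$ to produce a normally-positioned loop $\theta$ in $M$ (equivalently an even-degree subgraph of the dual graph of $\triang$, i.e.\ the $1$-skeleton of $X$) whose crossing vector $G_2(\theta,\cdot)$ equals $\mathbf{w}$: the selection of faces at each tetrahedron is governed by $c$ and the quad $\mathbf{w}$ picks out there, and the point that makes the crossing vector come out right is that the two opposite edges of a tetrahedron face the same quad, so that a tetrahedron all of whose faces are crossed contributes nothing. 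The same opposite-edge observation underlies the well-definedness on $H_1(M;\FF_2)$, and hence injectivity, of $g_2$. The integral sequence is handled by the same manipulations, with the additional remark that $\Ker_\ZZ\mathbf{L}/\Imag_\ZZ\mathbf{A}^\transp$ has free rank $2k=\operatorname{rk} H_1(\partial M;\ZZ)$ while its torsion subgroup is the finite group $H^1(M;\FF_2)$.

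The step I expect to be the main obstacle is this last one --- the inclusion $\Ker\gamma'_2\subseteq\Imag g_2$, together with the clean well-definedness of $g$ and $g_2$ on homology. That is where all the delicate combinatorics lives: which edge, and hence which quad, a normally positioned curve selects in each tetrahedron; the ambiguities that arise when the curve re-enters a tetrahedron; and the check that every such ambiguity is an element of $\Imag\mathbf{A}^\transp$. An alternative to this hands-on route would be to assemble the triangulation of $M$, the filled-in complex $\hat\triang$ and the dual spine $X$ into a single commutative ladder of exact sequences, in the manner of Neumann, and then read the two sequences off that diagram; this would, however, require setting up more homological machinery than the direct argument above.
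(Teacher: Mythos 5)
The paper does not prove this proposition at all: it cites Neumann, stating precisely that sequence~\eqref{Neumanns-exact-mod2} is the middle row of the commutative diagram in Step~3 of Neumann's proof of \cite[Theorem~5.1]{neumann1990} and sequence~\eqref{Neumanns-exact-integer} is the top row of the diagram in Step~4 of the same proof. So your proposal, which sets out to verify the sequences directly from the combinatorics, goes considerably beyond what the paper requires; you are in effect reconstructing a substantial portion of Neumann's Sections~4--6. That is a legitimate alternative strategy, and several of your observations are exactly the ones that drive Neumann's argument: the telescoping $\sum_i(\chi_{F_i}+\chi_{F_{i+1}})=0$ for $\gamma'_2\iota\circ g_2=0$, the even-total-weight characterisation of $1$-cocycles on the dual spine obtained by summing the cocycle condition over all edges and using that each triangular face is counted thrice, and the reduction of injectivity of $g$ to the unimodularity of the intersection form via the pairing with the peripheral leading--trailing vectors.

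However, as a proof the sketch has genuine gaps, and not just where you flag them. The phrase ``the same opposite-edge observation underlies the well-definedness on $H_1(M;\FF_2)$, and hence injectivity, of $g_2$'' conflates two distinct assertions: well-definedness says only that $G_2(\theta,\cdot)$ modulo $\Imag\mathbf{A}_2^\transp$ depends on $[\theta]$ alone, while injectivity requires showing that a curve with $G_2(\theta,\cdot)\in\Imag\mathbf{A}_2^\transp$ is null-homologous; the latter does not follow from the former, and in Neumann's treatment it is a separate argument. Likewise, the centrepiece step $\Ker\gamma'_2\subseteq\Imag g_2$ is only gestured at (``the selection of faces at each tetrahedron is governed by $c$ and the quad $\mathbf{w}$ picks out there''): the normalisation of $\mathbf{w}$ modulo tetrahedron vectors to single out at most one quad per tetrahedron is correct, but the construction of an actual normally embedded loop realising a prescribed crossing vector from a $0$-cochain $c$ --- including control of the ambiguities when the putative curve revisits a tetrahedron, and the verification that each such ambiguity lies in $\Imag\mathbf{A}_2^\transp$ --- is precisely the delicate lemma that Neumann proves and that your sketch leaves open. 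Finally, the closing remark that the torsion subgroup of $\Ker_\ZZ\mathbf{L}/\Imag_\ZZ\mathbf{A}^\transp$ equals $H^1(M;\FF_2)$ does not follow from the exact sequence alone (an extension of a free group by a finite group need not split with the finite group as the torsion subgroup) and in any case cannot be used before the exactness one is trying to establish is known. In short: your route is faithful to Neumann's, but the hardest middle-exactness step would need to be carried out in full to constitute a proof; the paper itself avoids this by citing Neumann, which is the expected treatment.
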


In \cite{neumann1990}, the sequence~\eqref{Neumanns-exact-mod2} is given as the middle
row of the commutative diagram on p.~267, in Step~3 of the proof of Theorem~5.1,
whereas \eqref{Neumanns-exact-integer} is the top row of the diagram in Step~4 of the same proof. 

It is easy to evaluate a cohomology class~$\gamma'_2(\mathbf{w})$
on first homology classes with $\FF_2$ coefficients.
Combining equations (5.2) and (6.2) of \cite{neumann1990},
we see that for any vector $\mathbf{w}\in\Ker\mathbf{L}_2$
and for any unoriented closed curve $\theta$ in normal position with respect to $\triang$,
we have
\begin{equation}\label{Kronecker-pairing-gamma'2}
    \bigl({\gamma'_2}(\mathbf{w})\bigr)([\theta]) = \mathbf{w}\cdot [l_\theta]_2
    = \sum_{\square\in Q(\triang)} \mathbf{w}(\square)[l_\theta]_2(\square),
\end{equation}
where
\begin{equation}\label{curve-ltd-mod-2}
    [l_\theta]_2(\square) = G_2(\theta,\square'') - G_2(\theta,\square')
    \ \text{for all}\ \square\in Q(\triang).
\end{equation}

In \cite{neumann1990}, Neumann also introduces the concept of \emph{parity} along curves, which plays a central role in his analysis.

\begin{definition}
Let $\mathbf{x}\in\FF_2^{Q(\triang)}$ be an arbitrary \nword{\FF_2}{vector} and let $\theta$
be an unoriented curve in normal position with respect to $\triang$ and without
backtracking.
Then the \emph{parity of $\mathbf{x}$ along $\theta$}
is defined by the formula
\[
    \parity_\theta(\mathbf{x})
    = \sum_{\square\in Q(\triang)} G_2(\theta,\square)\mathbf{x}(\square)\in\FF_2.
\]
Moreover, we define the parity of a vector~$\mathbf{x}\in\ZZ^{Q(\triang)}$
as the parity of its modulo~$2$ reduction~$[\mathbf{x}]_2$.
\label{def-parity}
\end{definition}

\begin{remark}\label{invariance-of-parity}
We remark that at this level of generality,
the parity depends essentially on the chosen curve $\theta$ and is therefore not invariant under homotopy.
However, if $\mathbf{x}\in\ZZ^{Q(\triang)}$ satisfies the linear congruence $\mathbf{Ax}\equiv 0$ mod~$2$,
then the parity~$\parity_\theta(\mathbf{x})$ only depends on the \nword{\FF_2}{homology} class of $\theta$.
In order to see this, it suffices to consider two curves~$\theta_1$ and $\theta_2$ which are
identical except near an edge~$E$ which they pass on opposite sides, as shown in Figure~\ref{fig:dragging}.
Denote by $\tet_1$ and $\tet_2$ the tetrahedra through which the curves enter and leave the neighbourhood
of $E$. Then
\[
    G_2(\theta_1, \square)-G_2(\theta_2,\square) = G_2(E,\square)+T(\tet_1,\square)+T(\tet_2,\square),
\]
where for a tetrahedron $\tet$ of $\triang$ and a quad $\square\in Q(\triang)$ we set
\[
    T(\tet,\square) =
    \begin{cases}
    1 & \text{if}\ \square\subset\tet,\\
    0 & \text{if}\ \square\not\subset\tet,
    \end{cases}
    \qquad
    T(\tet,\square)\in\FF_2.
\]

Since the `tetrahedral solutions modulo~$2$'
$\bigl(T(\tet,\square)\bigr)_{\square\in Q(\triang)}$
as well as the gluing constraint vector~$\bigl(G_2(E,\square)\bigr)_{\square\in Q(\triang)}$
are elements of $\Imag\mathbf{A}_2^\transp$,
it follows that for every $[\mathbf{x}]_2\in\Ker\mathbf{A}_2$ we have
\[
    \parity_{\theta_1}([\mathbf{x}]_2) - \parity_{\theta_2}
    = \sum_{\square\in Q(\triang)}\bigl(G_2(E,\square)+T(\tet_1,\square)+T(\tet_2,\square)\bigr)\,[\mathbf{x}(\square)]_2
    = 0.
\]
\end{remark}
\begin{figure}[tb]
    \centering
    \parbox[b][][t]{0.27\columnwidth}{
    \begin{tikzpicture}
        \node[anchor=south west,inner sep=0] (image) at (0,0)
            {\includegraphics[width=0.25\columnwidth]{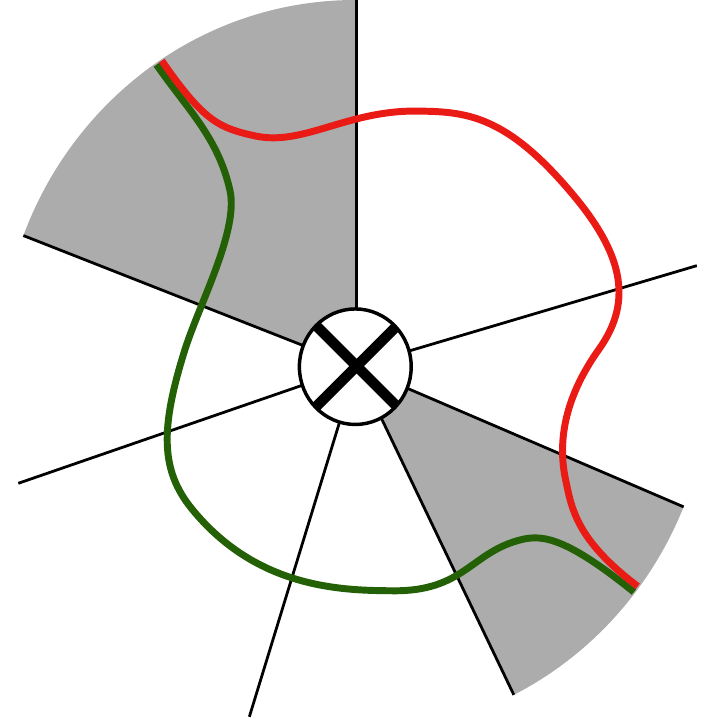}};
        \begin{scope}[x={(image.south east)},y={(image.north west)}]
            \node[anchor=south west] at (0.51, 0.55)  {$E$};
            \node[anchor=south west] at (0.7, 0.8) {$\theta_1$};
            \node[anchor=north east] at (0.3, 0.3) {$\theta_2$};
            \node[anchor=south] at (0.15, 0.65) {$\tet_1$};
            \node[anchor=south] at (0.75, 0.1) {$\tet_2$};
        \end{scope}
    \end{tikzpicture}
    }%
    \parbox[b][][t]{0.7\columnwidth}{
    \caption[Cross-sectional view of an edge neighbourhood with two curves passing on opposite sides]{%
    Cross-sectional view of a neighbourhood of an edge~$E$ of the triangulation~$\triang$ where
    two curves $\theta_1$ and $\theta_2$ deviate from one another.
    The tetrahedra~$\tet_1$ and $\tet_2$ through which the curves enter and leave
    the neighbourhood of $E$ are shaded.
    }\label{fig:dragging}
    }
\end{figure}

\begin{remark}\label{two-curves}
Whenever $\theta$ and $\psi$ are two closed curves in general position with respect to $\triang$ and
without backtracking, then $\parity_\theta\bigl([l_\psi]_2\bigr) = 0$.
Indeed, we may compute
\begin{equation}\label{eq:LTDs-have-even-parity}
    \parity_\theta\bigl([l_\psi]_2\bigr)
    = \sum_{\square\in Q(\triang)}\mkern-10mu G_2(\theta,\square)[l_\psi]_2(\square)
    \overset{\text{\eqref{Kronecker-pairing-gamma'2}}}{=\joinrel=\joinrel=}
    \gamma'_2\bigl(g_2([\theta])\bigr)\bigl([\psi]\bigr)
    = 0,
\end{equation}
the last equality being a consequence of the exactness of the sequence~\eqref{Neumanns-exact-mod2}.
\end{remark}

The following is one of the main theorems from \cite{neumann1990}. A generalisation of it, proved in the next section, will be a key step in our proof.

\begin{lemma}[Neumann {\cite[Lemma~6.1]{neumann1990}}]\label{neumann-6.1}
Suppose that every component of $\partial M$ is a torus. Then there exists an element
$\eta\in\ZZ^{Q(\triang)}$ satisfying:
\begin{enumerate}[1.]
\item $\displaystyle\eta(\square) + \eta(\square') + \eta(\square'') = 1$ in every tetrahedron,
\item $\displaystyle\mathbf{G}\eta = (2,2,\dotsc,2)^\transp$,
\item $\displaystyle\mathbf{G}_\partial \eta = 0$ in every boundary torus,
\item $\displaystyle\parity_\theta(\eta)=0$ for every closed curve~$\theta\subset M$ in
general position with respect to $\triang$ and without backtracking.
\end{enumerate}
Moreover, any such $\eta$ is unique up to $\mathbf{L^\transp}(\ZZ^{\edges(\triang)})$.
\end{lemma}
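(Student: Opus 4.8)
The plan is to reduce Lemma~\ref{neumann-6.1} to two inputs: Neumann's existence theorem for peripherally trivial integer-valued angle structures (stated in the excerpt just after Definition~\ref{def:Zvals}) and the exact sequences of Proposition~\ref{neumanns-exact-sequences}, which control the parity functional cohomologically. The first step is to reinterpret condition~4. For any $\eta\in\ZZ^{Q(\triang)}$ satisfying conditions~1 and~2, the parity $\parity_\theta(\eta)$ depends only on the free homotopy class of the non-backtracking normal curve~$\theta$: this is the computation of Remark~\ref{invariance-of-parity}, since pushing $\theta$ across an edge~$E$ between tetrahedra $\tet_1,\tet_2$ changes $\parity_\theta(\eta)$ by $\sum_{\square}\bigl(G_2(E,\square)+T(\tet_1,\square)+T(\tet_2,\square)\bigr)[\eta]_2(\square)$, whose first summand is the $E$th edge equation mod~$2$ (hence $0$ by condition~2) while the other two are tetrahedral sums of $\eta$ (hence each $1$ by condition~1), giving total change $0$. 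As the parity is additive under concatenation and kills null-homotopic curves, it descends to a homomorphism $H_1(M;\FF_2)\to\FF_2$, i.e. a class $\widehat{\parity}(\eta)\in H^1(M;\FF_2)$, and condition~4 says precisely that $\widehat{\parity}(\eta)=0$. The same computation (all tetrahedral sums now $0$) defines $\widehat{\parity}$ on $\TAS(\triang)\cap\ZZ^{Q(\triang)}$, and restricted to the solution set of~1--3 the map $\eta\mapsto\widehat{\parity}(\eta)$ is affine with linear part $\widehat{\parity}$ on the integer tangential angle structures.

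I would then identify that linear part with Neumann's map: unwinding the pairing~\eqref{Kronecker-pairing-gamma'2} via the mod-$2$ identities $v(\square'')\equiv v(\square)+v(\square')$ (valid for tangential~$v$) shows that $[v]_2\in\Ker\mathbf{L}_2$ and $\widehat{\parity}(v)=\gamma'_2\bigl([v]_2\bigr)$ for every integer tangential angle structure~$v$. Since $\gamma'_2$ is onto $H^1(M;\FF_2)$ by~\eqref{Neumanns-exact-mod2}, every cohomology class should be the parity of some integer tangential angle structure (the one delicate point, discussed at the end).

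Existence of $\eta$ then follows by a two-stage correction. Start from Neumann's $\eta_0$ (which obeys~1--3) and choose an integer tangential angle structure~$v$ with $\widehat{\parity}(v)=\widehat{\parity}(\eta_0)$; then $\eta_1:=\eta_0-v$ obeys~1,~2,~4. Having achieved~4 we get $\mathbf{G_\del}\eta_1\equiv 0\pmod 2$ automatically, because the entries of $\mathbf{G_\del}\eta_1$ reduce mod~$2$ to $\parity_{\mu_j}(\eta_1),\parity_{\lambda_j}(\eta_1)$, which vanish by condition~4. Next add a peripheral leading-trailing deformation $\mathbf{L}_\del^\transp(m)$, $m\in\ZZ^{2k}$, solving $\mathbf{G_\del}\mathbf{L}_\del^\transp(m)=-\mathbf{G_\del}\eta_1$; this is solvable over $\ZZ$ because $\mathbf{G_\del}\mathbf{L}_\del^\transp$ is, up to sign and a possible factor of~$2$, the intersection matrix on $H_1(\partial M;\ZZ)$ (a Neumann--Zagier bilinear relation, cf.~\cite{futer-gueritaud}) and $\mathbf{G_\del}\eta_1$ is even. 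Since peripheral leading-trailing deformations are tangential and have trivial parity (Remark~\ref{two-curves}), $\eta:=\eta_1+\mathbf{L}_\del^\transp(m)$ still obeys~1,~2,~4 and now obeys~3. For uniqueness, the difference $v$ of two solutions of~1--4 is an integer peripherally trivial tangential angle structure with $\gamma'_2([v]_2)=\widehat{\parity}(v)=0$, so exactness of~\eqref{Neumanns-exact-mod2}--\eqref{Neumanns-exact-integer} places $[v]_2$ in the image of $g_2$; combining this with $\mathbf{G_\del}v=0$ forces $v\in\mathbf{L}^\transp(\ZZ^{\edges(\triang)})$.

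The main obstacle is the interface between Neumann's exact sequences, which describe $\Ker\mathbf{L}_2/\Imag\mathbf{A}_2^\transp$ and $\Ker_\ZZ\mathbf{L}/\Imag_\ZZ\mathbf{A}^\transp$, and the smaller lattice of \emph{integral} tangential (and peripherally trivial) angle structures that actually appears in Lemma~\ref{neumann-6.1}: one must verify that the correcting vector~$v$ can be taken integral and tangential simultaneously (equivalently, that $\TAS(\triang)\cap\ZZ^{Q(\triang)}$ surjects, after reduction mod~$2$, onto the part of $\Ker\mathbf{L}_2/\Imag\mathbf{A}_2^\transp$ detected by $\gamma'_2$), and, for uniqueness, that the only ambiguity left after imposing~1--4 is exactly $\mathbf{L}^\transp(\ZZ^{\edges(\triang)})$ rather than some larger lattice. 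This integrality bookkeeping, not a single hard idea, is the heart of the matter, and it is precisely what the generalisation Theorem~\ref{generalization-of-6.1} of the next section is built to deliver in a single package.
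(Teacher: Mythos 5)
Your proposal attempts to prove a statement that this paper actually quotes as a black box from Neumann's \emph{Topology `90} article and never reproves: Lemma~\ref{neumann-6.1} is cited as \cite[Lemma~6.1]{neumann1990}, and the paper's own Theorem~\ref{generalization-of-6.1} and Lemma~\ref{homogenous-generalization-of-neumanns6.1} both \emph{use} Lemma~\ref{neumann-6.1} as an input (the latter invokes it for the ``up to $\mathbf{L}^\transp(\ZZ^{\edges(\triang)})$'' part, and the former adds a solution from Lemma~\ref{neumann-6.1} to a homogeneous one). So when you conclude with ``it is precisely what the generalisation Theorem~\ref{generalization-of-6.1} of the next section is built to deliver in a single package,'' you are deferring the hard point to a statement that depends logically on the very lemma you are trying to prove. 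That deferral is circular and does not close your argument.

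The substantive gap you flag --- that $\gamma'_2$ is surjective out of $\Ker\mathbf{L}_2/\Imag\mathbf{A}^\transp_2$, but what you actually need is an \emph{integer} tangential angle structure with prescribed parity --- is real, and is the content whose absence makes your existence step incomplete. The paper's closest analogue, the proof of Lemma~\ref{homogenous-generalization-of-neumanns6.1}, closes exactly this gap with a concrete trick you do not supply: it uses the \emph{integral} exact sequence~\eqref{Neumanns-exact-integer} (not the mod-$2$ one) to pick a representative $\mathbf{w}=(w,w',w'')\in\Ker_\ZZ\mathbf{L}$ for the parity class, and then forms the integer vector $(w'-w'',\,w''-w,\,w-w')^\transp$. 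That vector has zero tetrahedral sums by inspection, lies in $\Ker_\ZZ\mathbf{G}$ because applying $\mathbf{G}$ to it yields $\mathbf{L}\mathbf{w}=0$, and by the re-indexing identity~\eqref{relating-gamma'_2-to-parity} has exactly the parity $\gamma'_2\iota(\mathbf{w})$. This is the single concrete step your outline is missing. Everything around it --- the interpretation of $\parity_\theta$ as an element of $H^1(M;\FF_2)$ via Remark~\ref{invariance-of-parity}, the identity $\widehat\parity(v)=\gamma'_2\bigl([v]_2\bigr)$ for integer tangential $v$ (which is the same re-indexing identity read backwards), the peripheral leading-trailing correction using $\mathbf{G_\partial}\eta_1\in2\ZZ^{2k}$ and the Neumann--Zagier relation $\mathbf{G_\partial}\mathbf{L}_\partial^\transp=\pm 2\,(\text{intersection form})$, and Remark~\ref{two-curves} guaranteeing the correction does not spoil condition~4 --- is sound and tracks Neumann's argument well. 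For uniqueness, your idea of using exactness of~\eqref{Neumanns-exact-mod2} to place $[v]_2\in\Imag g_2$ and then pinning it down via $\mathbf{G_\partial}v=0$ is the right shape, though you would still need an explicit integral comparison to cut it down to exactly $\mathbf{L}^\transp(\ZZ^{\edges(\triang)})$ rather than merely to a lattice of the same rank.

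In short: the high-level reduction to the exact sequences is right and parallels what the paper does for its generalisation, but you must replace the circular reference to Theorem~\ref{generalization-of-6.1} with the $(w'-w'',\,w''-w,\,w-w')$ construction from a lift of the parity class through~\eqref{Neumanns-exact-integer}, and flesh out the integral uniqueness step.
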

Note that while conditions~1--3
simply mean that $\eta$ is a peripherally trivial integer-valued angle structure
in the sense of Definition~\ref{def:Zvals}, condition~4
is somewhat more restrictive and not usually considered in the context of angle structures.
Since we are interested in the spaces of circle-valued angles,
we are now going to introduce analogous structures over $\FF_2$.
\begin{definition}
A \emph{\nword{\FF_2}{valued} angle structure with even parity} on an ideal triangulation~$\triang$
is a vector $\alpha\in\FF_2^{Q(\triang)}$ satisfying the following linear equations over $\FF_2$:
\begin{enumerate}[1.]
\item $\displaystyle \alpha(\square) + \alpha(\square') + \alpha(\square'') = 1$
for all $\square\in Q(\triang)$;
\item $\displaystyle\parity_\theta(\alpha)=0$ for every closed curve~$\theta$
in normal position with respect to $\triang$ and without backtracking.
\end{enumerate}
The set of \nword{\FF_2}{valued} angle structures with even parity on $\triang$
is denoted by $\AFpar(\triang)$.
\end{definition}
Note that condition~2 in the above definition must hold in particular for
null-homologous curves encircling each edge of the triangulation $\triang$
(as in Figure~\ref{fig:coboundary-ltd}, right),
so that $\mathbf{G}_2\alpha = 0$ for every $\alpha\in\AFpar$.
Similarly, condition~2 applied to a peripheral curve~$\theta$ implies
the vanishing of the (\nword{\FF_2}{valued}) peripheral angle-holonomy of $\alpha$.
Hence, for any $\alpha\in\AFpar(\triang)$, we obtain a peripherally trivial
\nword{\{\pm 1\}}{valued} angle structure~$\omega\in\SAS_0(\triang)$ given by
$\omega(\square) = (-1)^{\alpha(\square)}$.

\subsection{Existence theorem for integer-valued pseudo-angle structures}
\label{pseudoangle-existence}
The goal of this section is to establish the following theorem, which
is a generalisation of Neumann's result stated as Lemma~\ref{neumann-6.1} above.

\begin{theorem}
\label{generalization-of-6.1}
Let $\triang$ be an oriented ideal triangulation with $N$ tetrahedra
and $k>0$ ends with toroidal cross-sections~$T_1,\dotsc,T_k$.
In each boundary torus~$T_j$, choose a pair of oriented peripheral curves
$(\mu_j,\lambda_j)$ satisfying $\intersection(\mu_j,\lambda_j)=1$.
Then for every even vector~$d\in2\ZZ^{2k}$ and for any even vector $c\in 2\ZZ^{\edges(\triang)}$
satisfying $p(c-\mathbf{2})=0$, there exists an element $\eta\in\ZZ^{Q(\triang)}$
for which all of the following conditions hold:
\begin{enumerate}[1.]
\item $\displaystyle\eta(\square) + \eta(\square') + \eta(\square'') = 1$ in every tetrahedron,
\item $\displaystyle\mathbf{G}\eta = c$,
\item $\displaystyle\mathbf{G}_\partial \eta = d$,
\item $\displaystyle\parity_\theta(\eta)=0$ for every closed curve~$\theta$ in general position with
respect to $\triang$ and without backtracking.
\end{enumerate}
Moreover, any two such vectors~$\eta$ differ by an element
of $\mathbf{L^\transp}(\ZZ^{\edges(\triang)})$.
\end{theorem}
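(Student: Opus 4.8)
The plan is to deduce Theorem~\ref{generalization-of-6.1} from Neumann's Lemma~\ref{neumann-6.1} by a reduction argument, rather than re-running Neumann's homological machinery from scratch. Let $\eta_0\in\ZZ^{Q(\triang)}$ be the vector supplied by Lemma~\ref{neumann-6.1}, so that $\eta_0$ satisfies conditions~1--4 with right-hand sides $\mathbf{G}\eta_0=\mathbf{2}$, $\mathbf{G}_\partial\eta_0=0$, and vanishing parity. The idea is to look for $\eta=\eta_0+v$, where $v\in\ZZ^{Q(\triang)}$ is chosen so that the tetrahedral-sum condition~1 is preserved, i.e. $v(\square)+v(\square')+v(\square'')=0$, while $v$ carries the burden of shifting the edge and peripheral right-hand sides by the prescribed amounts. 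Concretely, we need $v$ with $\mathbf{G}v=c-\mathbf{2}$, $\mathbf{G}_\partial v=d$, the tetrahedral homogeneous condition, and $\parity_\theta(v)=0$ for all admissible $\theta$.

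The natural source of such correction vectors is the space of integer-valued leading-trailing deformations and peripheral leading-trailing deformations: recall from Section~\ref{subsec:TAS} that the rows of $\mathbf{L}$ and $\mathbf{L}_\partial$ all satisfy the tetrahedral condition $w(\square)+w(\square')+w(\square'')=0$, and by Remark~\ref{two-curves} every $[l_\psi]_2$ has vanishing parity along every $\theta$. So if we take $v$ to be an integer combination of rows of $\mathbf{L}$ and $\mathbf{L}_\partial$, conditions~1 (homogeneous part) and~4 come for free, and the problem reduces to the \emph{linear-algebra question} of hitting the target $(c-\mathbf{2},\,d)$ under the map $w\mapsto(\mathbf{G}w,\mathbf{G}_\partial w)$ restricted to $\Imag_\ZZ\mathbf{L}^\transp+\Imag_\ZZ\mathbf{L}_\partial^\transp$. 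Here the hypotheses $c\in 2\ZZ^{\edges(\triang)}$, $d\in 2\ZZ^{2k}$, and $p(c-\mathbf{2})=0$ should be exactly what is needed: the parity/evenness conditions match the fact that $\mathbf{G}\mathbf{L}^\transp$ and $\mathbf{G}_\partial\mathbf{L}^\transp$ have a $2$-divisible image structure (this is the Neumann--Zagier symplectic-type relation between $\mathbf{G}$ and $\mathbf{L}$), and the condition $p(c-\mathbf{2})=0$ is precisely the obstruction for $c-\mathbf{2}$ to lie in $\Ker p$, matching Proposition~\ref{computing-homology-of-hat} and the fact that $\mathbf{G}w\in\Ker p$ for all tangential $w$. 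I would make this precise by computing $\mathbf{G}\mathbf{L}^\transp$, $\mathbf{G}_\partial\mathbf{L}^\transp$, $\mathbf{G}\mathbf{L}_\partial^\transp$, $\mathbf{G}_\partial\mathbf{L}_\partial^\transp$ explicitly (these are the standard Neumann--Zagier intersection matrices, known to be twice skew-symmetric, with the $\mathbf{G}_\partial\mathbf{L}_\partial^\transp$ block governed by the intersection pairing on $H_1(\partial M)$) and checking surjectivity of the relevant map onto the sublattice of $2\ZZ^{\edges(\triang)}\oplus2\ZZ^{2k}$ cut out by $p\circ(\text{first coordinate})=0$.

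The uniqueness statement is the easy half: if $\eta_1,\eta_2$ both satisfy conditions~1--4 then their difference $\xi=\eta_1-\eta_2$ satisfies the homogeneous tetrahedral condition, $\mathbf{G}\xi=0$, $\mathbf{G}_\partial\xi=0$, and $\parity_\theta(\xi)=0$ for all $\theta$. The first three say $\xi\in\TAS_0(\triang)\cap\ZZ^{Q(\triang)}$; combined with the integrality this forces $\xi\in\Imag_\ZZ\mathbf{L}^\transp$ provided one knows that $\Ker_\ZZ\mathbf{L}\cap(\text{parity-zero vectors})$ reduces mod~$2$ into $\Imag\mathbf{L}_2^\transp$ — but that is exactly the content built into Neumann's exact sequence~\eqref{Neumanns-exact-mod2} together with the uniqueness clause already present in Lemma~\ref{neumann-6.1}. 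Indeed the cleanest route is: apply the uniqueness part of Lemma~\ref{neumann-6.1} \emph{itself}, since $\xi$ plus any fixed solution again solves a Neumann-type problem; alternatively argue directly that the parity-vanishing condition on $\xi\in\Ker_\ZZ\mathbf{L}$ kills the $H^1(M;\FF_2)$-class $\gamma'_2([\xi]_2)$ via \eqref{Kronecker-pairing-gamma'2}, and the $H_1(\partial M)$-class via $\mathbf{G}_\partial\xi=0$, so by exactness of \eqref{Neumanns-exact-integer} we get $[\xi]_2\in\Imag\mathbf{L}_2^\transp$, and then an integral lifting/descent argument upgrades this to $\xi\in\Imag_\ZZ\mathbf{L}^\transp$.

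The main obstacle I anticipate is the \emph{existence} step, specifically verifying the surjectivity claim: proving that $(c-\mathbf{2},d)$ with the stated evenness and $p$-vanishing hypotheses actually lies in the image of the integral map $w\mapsto(\mathbf{G}w,\mathbf{G}_\partial w)$ on $\Imag_\ZZ\mathbf{L}^\transp+\Imag_\ZZ\mathbf{L}_\partial^\transp$. This requires a careful bookkeeping of the Neumann--Zagier bilinear relations over $\ZZ$ (not just over $\QQ$), and in particular understanding the cokernel of these intersection matrices as a lattice — the subtlety being torsion and the exact role of the factor of $2$. I would expect to need Neumann's results from \cite[\S4--5]{neumann1990} on the precise images $\Imag_\ZZ\mathbf{A}^\transp$, $\Ker_\ZZ\mathbf{L}$, and the homology identifications of Proposition~\ref{computing-homology-of-hat} and Proposition~\ref{neumanns-exact-sequences} to close this gap; everything else is either formal or a direct matrix computation.
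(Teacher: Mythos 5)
Your plan for the existence half has a fatal flaw in the choice of correction space. You propose to write $\eta=\eta_0+v$ with $\eta_0$ from Lemma~\ref{neumann-6.1} and $v\in\Imag_\ZZ\mathbf{L}^\transp+\Imag_\ZZ\mathbf{L}_\partial^\transp$, and then ``hit the target $(c-\mathbf{2},d)$'' via $w\mapsto(\mathbf{G}w,\mathbf{G}_\partial w)$. But every vector in $\Imag\mathbf{L}^\transp+\Imag\mathbf{L}_\partial^\transp$ lies (after tensoring with $\RR$) in $\TAS(\triang)$, and by the very definition of $\TAS(\triang)$ this forces $\mathbf{G}w=0$. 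So $\mathbf{G}\mathbf{L}^\transp=0$ and $\mathbf{G}\mathbf{L}_\partial^\transp=0$ as matrix identities — not ``twice skew-symmetric with $2$-divisible image'' as you speculated — and the map you want to be surjective is identically zero on the $\mathbf{G}$-coordinate. Correction vectors drawn from leading--trailing deformations can only adjust the peripheral right-hand side $d$, never the edge right-hand side $c$. Your reduction therefore works only in the trivial case $c=\mathbf{2}$.

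The paper gets around exactly this by \emph{not} taking the correction from $\Imag\mathbf{L}^\transp$. It first solves $\mathbf{Lx}=c-\mathbf{2}$ (possible because $c-\mathbf{2}\in2\ZZ\cap\Ker p$ is a mod-$2$-trivial class in $H_1(\hat{\triang};\FF_2)\cong\Ker p/\Imag_\ZZ\mathbf{L}$, by Proposition~\ref{computing-homology-of-hat}), and then applies the ``twist'' $\mathbf{x}=(x,x',x'')\mapsto r=(x'-x'',\,x''-x,\,x-x')$. This $r$ satisfies the homogeneous tetrahedral condition and — crucially — the identity $\mathbf{G}r=\mathbf{Lx}=c-\mathbf{2}$, without $r$ lying in $\TAS$. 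Parity is then repaired by subtracting an analogous twist of a representative $\mathbf{w}\in\Ker_\ZZ\mathbf{L}$ of the class $\gamma'_2\iota^{-1}\bigl(\parity_{(-)}(r)\bigr)$ supplied by the exact sequence~\eqref{Neumanns-exact-integer}, and only at the very end are peripheral leading--trailing deformations used, precisely because those can adjust $d$ without touching the already-fixed $c$ or the parity. This twist map is the essential ingredient absent from your argument; without it, no combination of leading--trailing deformations can reach a nontrivial $c$.

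Your uniqueness argument, by contrast, is correct and is essentially the same one the paper uses: reduce to the homogeneous case $c=0$, $d=0$ and invoke the uniqueness clause of Lemma~\ref{neumann-6.1} (or, equivalently, run the exact-sequence argument you outline). You should keep that part and replace the existence step by the twist/homogenisation strategy above.
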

Observe that Lemma~\ref{neumann-6.1} covers the special case of
$c=\mathbf{2}=(2,\dotsc,2)^\transp$ and $d=0$, which corresponds to the geometric
component~$\SASdistinguished(\triang)$ under the exponential map.
In order to prove the theorem for general $c$ and $d$, we first establish a simpler
version of the statement.

\begin{lemma}\label{homogenous-generalization-of-neumanns6.1}
For every $c\in 2\ZZ^{\edges(\triang)}\cap \Ker p$
and every $d\in2\ZZ^{2k}$,
there exists an $\eta\in\ZZ^{Q(\triang)}$ such that:
\begin{enumerate}[1.]
\item $\displaystyle\eta(\square) + \eta(\square') + \eta(\square'') = 0$ in every tetrahedron,
\item $\displaystyle\mathbf{G}\eta = c$,
\item $\displaystyle\mathbf{G}_\partial \eta = d$,
\item $\displaystyle\parity_\theta(\eta)=0$ for every closed curve~$\theta$ in general position with
respect to $\triang$ and without backtracking.
\end{enumerate}
Moreover, any two such elements~$\eta$ differ by an element
of $\mathbf{L^\transp}(\ZZ^{\edges(\triang)})$.
\end{lemma}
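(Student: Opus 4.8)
The plan is to prove this homogeneous variant first and then bootstrap to Theorem~\ref{generalization-of-6.1} (that deduction will be short: subtract a fixed Neumann solution $\eta_0$ from Lemma~\ref{neumann-6.1}, so that $\eta - \eta_0$ is forced to satisfy the homogeneous tetrahedral equation and the shifted linear data $c' = c - \mathbf{2}$, $d' = d$, with $c' \in \Ker p$ by hypothesis; condition~4 is linear over $\FF_2$ so it passes to the difference as well). So I focus on Lemma~\ref{homogenous-generalization-of-neumanns6.1}.

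First I would dispose of the uniqueness clause, since it is the easy half. Suppose $\eta_1,\eta_2$ both satisfy conditions 1--4, and set $w = \eta_1 - \eta_2$. Then $w$ satisfies $w(\square)+w(\square')+w(\square'')=0$ and $\mathbf{G}w = 0$, i.e.\ $w\in\TAS(\triang)\cap\ZZ^{Q(\triang)}$; moreover $\mathbf{G}_\partial w = 0$, so in fact $w\in\TAS_0(\triang)$. The parity condition 4 gives $\parity_\theta(w) = 0$ for all $\theta$. Now $\TAS_0(\triang) = \mathbf{L}^\transp(\RR^N)$, and the integer points of this space are $\mathbf{L}^\transp(\ZZ^N)$ up to a subtlety I would need to check via Neumann's exact sequence~\eqref{Neumanns-exact-integer}: an integer tangential vector with vanishing peripheral holonomy represents, via $\gamma'_2\iota$, a class in $H^1(M;\FF_2)$, and the parity conditions say this class pairs trivially with every $\FF_2$-homology class, hence is zero; by exactness of~\eqref{Neumanns-exact-integer}, $w\bmod 2 \in \Imag g \oplus (\text{image of }\mathbf{L})$, and a short argument combining this with $w\in\Imag_\RR\mathbf{L}^\transp$ should pin down $w\in\Imag_\ZZ\mathbf{L}^\transp$. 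This is the place where I expect to lean hardest on Neumann's machinery.

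For existence, the strategy is to build $\eta$ in two stages. Stage one: using Neumann's results (Proposition~\ref{computing-homology-of-hat} and the surrounding discussion), the hypothesis $c\in\Ker p$ means the mod-$2$ reduction of $c$ determines a $1$-cycle in $\hat\triang$ with $\FF_2$-coefficients; but more is true over $\ZZ$ — since $c$ is even and lies in $\Ker p$, I would like to realise $c$ as $\mathbf{G}\eta$ for some integer vector $\eta$ satisfying the homogeneous tetrahedral relation. The homogeneous tetrahedral relation says precisely $\eta\in\ker\begin{bmatrix} I & I & I\end{bmatrix}$, which is a rank-$2N$ sublattice of $\ZZ^{3N}$ (coordinatewise, $\eta''(\square) = -\eta(\square)-\eta'(\square)$ is free). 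On this sublattice $\mathbf{G}$ restricts to $\mathbf{L}$-type combinatorics; concretely one checks that the image of $\mathbf{G}$ restricted to $\{\eta : \eta+\eta'+\eta''=0\}$ is exactly $\{c : p(c)=0\} \cap (\text{appropriate parity/index condition})$. I would verify, via a Smith-normal-form or direct combinatorial computation on a single tetrahedron then assembling, that every even $c\in\Ker p$ arises this way — the evenness is exactly what kills the parity obstruction coming from the diagonal $I$-block. Stage two: adjust the boundary data. The curves $\mu_j,\lambda_j$ contribute via $\mathbf{G}_\partial$; I would add a correction term lying in $\TAS(\triang)$ — built from peripheral leading-trailing deformations $l_{\mu_j}, l_{\lambda_j}$, whose rows form $\mathbf{L}_\partial$ — to shift $\mathbf{G}_\partial\eta$ to the prescribed even $d$ without disturbing conditions 1--2, since $\mathbf{A}\mathbf{L}_\partial^\transp = 0$. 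The evenness of $d$ matches the fact that leading-trailing corrections move $\mathbf{G}_\partial$ by even amounts.

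Finally, condition~4 (even parity along all curves) is the delicate part and I expect it to be \textbf{the main obstacle}. Having constructed $\eta$ satisfying 1--3, its parities $\parity_\theta(\eta)$ depend only on the $\FF_2$-homology class of $\theta$ (by Remark~\ref{invariance-of-parity}, since $\mathbf{A}\eta\equiv 0 \bmod 2$ — here I use that $c,d$ are even and $\eta+\eta'+\eta''=0$), so they define a class $P(\eta)\in H^1(M;\FF_2)$. The freedom remaining in $\eta$ is exactly addition of elements of $\Ker_\ZZ\mathbf{L}$ preserving 1--3, and under $\gamma'_2\iota$ (respectively $\gamma'_2$) the image of this freedom in $H^1(M;\FF_2)$ is, by the exactness in Proposition~\ref{neumanns-exact-sequences}, the full group $H^1(M;\FF_2)$ modulo the image of the homology classes $g([\theta])$ — and the point is that $P(\eta)$ lies precisely in that complementary part, so it can be cancelled. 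Concretely: the obstruction $P(\eta)$ is independent of the choices made within a fixed $(c,d)$ only up to $\Imag\gamma'_2(\Imag g_2)$, which is $0$ by exactness of~\eqref{Neumanns-exact-mod2}; hence $P$ descends to a well-defined element of the quotient, and one shows it vanishes by exhibiting one convenient $\eta$ (e.g.\ pull back Neumann's $\eta$ from Lemma~\ref{neumann-6.1} for the base case $c=\mathbf{2},d=0$ and track how the stage-one and stage-two constructions change the parity — each modification was by an element of $\Imag\mathbf{A}^\transp$ or $\Imag\mathbf{L}_\partial^\transp$, both of which have vanishing parity along every curve by Remark~\ref{two-curves} and the fact that columns of $\mathbf{A}^\transp$ are tetrahedral/edge vectors). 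Once $P(\eta)=0$ is arranged, subtracting the corresponding element of $\Ker_\ZZ\mathbf{L}$ (which exists by surjectivity of $\gamma'_2\iota$) gives the desired $\eta$, completing the proof.
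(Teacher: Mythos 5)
Your high-level plan touches many of the same ingredients the paper uses (Neumann's exact sequences, leading--trailing corrections, Remark~\ref{two-curves}), but it has a genuine gap at the step that turns linear-algebraic data into an element of $\ZZ^{Q(\triang)}$ satisfying both the tetrahedral sum relation and the gluing equation. The paper's key device is a ``rotation'' trick that you do not use and whose absence breaks two of your steps. Concretely: since $c$ is even and lies in $\Ker p$, its class in $H_1(\hat\triang;\FF_2)\cong \Ker p/\Imag_\ZZ\mathbf{L}$ vanishes, so $c=\mathbf{L}\mathbf{x}$ for some integer $\mathbf{x}=(x,x',x'')$. The paper then sets
$r:=(x'-x'',\,x''-x,\,x-x')^\transp$ and computes $\mathbf{G}r=(G''-G')x+(G-G'')x'+(G'-G)x''=\mathbf{L}\mathbf{x}=c$, while the three coordinates in each tetrahedron clearly sum to zero. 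Your Stage~one claim that ``on the sublattice $\{\eta+\eta'+\eta''=0\}$, $\mathbf{G}$ restricts to $\mathbf{L}$-type combinatorics'' is gesturing at this, but you never produce $\eta$; the construction needs this precise rotation.

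The same missing idea breaks your treatment of condition~4. You propose to ``subtract the corresponding element of $\Ker_\ZZ\mathbf{L}$'' once the parity class is known, but subtracting a raw $\mathbf{w}\in\Ker_\ZZ\mathbf{L}$ from $\eta$ destroys the tetrahedral constraint~1. The paper instead takes $\mathbf{w}\in\Ker_\ZZ\mathbf{L}$ with $\gamma'_2\iota(\mathbf{w})=\parity_{(-)}(r)$ and subtracts its rotation $(w'-w'',w''-w,w-w')^\transp$; this has the same parity class as $r$ (a short reindexing, eqn.~\eqref{relating-gamma'_2-to-parity}), keeps condition~1 trivially, and keeps condition~2 because $\mathbf{G}$ of the rotation equals $\mathbf{L}\mathbf{w}=0$. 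Without the rotation, your parity-correction step does not preserve 1--2, and there is no reason the corrected $\eta$ still lies in the right affine set.

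Two further points. First, your claim that elements of $\Imag\mathbf{A}^\transp$ have ``vanishing parity along every curve\ldots because columns of $\mathbf{A}^\transp$ are tetrahedral/edge vectors'' is false: for the tetrahedral indicator $\chi_\tet$ one has $\parity_\theta(\chi_\tet)=\sum_\square G_2(\theta,\square)\chi_\tet(\square)=(\text{number of visits of }\theta\text{ to }\tet)\bmod 2$, which is typically nonzero. The invariance in Remark~\ref{invariance-of-parity} rests on the orthogonality $\Imag\mathbf{A}_2^\transp\perp\Ker\mathbf{A}_2$ applied to a vector \emph{in} $\Ker\mathbf{A}_2$, not on the columns of $\mathbf{A}^\transp$ themselves having zero parity. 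Second, your uniqueness sketch conflates the domain $\Ker_\ZZ\mathbf{L}/\Imag_\ZZ\mathbf{A}^\transp$ of $\gamma'_2\iota$ with $\TAS_0(\triang)=\Imag_\RR\mathbf{L}^\transp$; these are different spaces, and the promised ``short argument'' pinning down $w\in\Imag_\ZZ\mathbf{L}^\transp$ is exactly the nontrivial content of the uniqueness clause of Lemma~\ref{neumann-6.1}. The paper sidesteps the whole issue by directly invoking that uniqueness applied to the difference (which satisfies conditions 1--4 with $c=d=0$); you should do the same rather than re-derive it. Finally, the paper fixes parity \emph{before} adjusting the boundary holonomies (the opposite of your order), because the peripheral leading--trailing corrections used for condition~3 provably preserve even parity via Remark~\ref{two-curves}, whereas in your order you would need to justify that the parity correction leaves $\mathbf{G}_\partial\eta$ unchanged, which is not automatic.
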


\begin{proof}[Proof of Lemma~\ref{homogenous-generalization-of-neumanns6.1}]
Recall from Proposition~\ref{computing-homology-of-hat}
that $\Ker p / \Imag_\ZZ\mathbf{L}\cong H_1(\hat{\triang};\FF_2)$.
Whenever $c\in\Ker p$ is an even vector, $c$ represents the trivial
class in $H_1(\hat{\triang};\FF_2)$, so $c\in\Imag_\ZZ\mathbf{L}$.
Let $\mathbf{x}\in\ZZ^{Q(\triang)}$ be such that $\mathbf{Lx}=c$ and write
$\mathbf{x}=(x,x',x'')^\transp$ using the ordering~\eqref{quad-ordering}.
Then the vector $r:=(x'-x'', x''-x, x-x')^\transp$ satisfies
$r(\square)+r(\square')+r(\square'')=0$ in every tetrahedron and moreover
\[
    \mathbf{G}r = [G\;|\;G'\;|\;G'']
        \left(\begin{smallmatrix}
            x'\phantom{'}-x''\\
            x''-x\phantom{''}\\
            x\phantom{''}-x'\phantom{'}
        \end{smallmatrix}\right)
    = (G''-G')x + (G-G'')x' + (G'-G)x'' = \mathbf{Lx} = c.
\]
In other words, $r$ satisfies conditions 1 and 2.

Suppose now that $\theta$ is a curve in normal position with respect
to $\triang$ and without backtracking. As $c$ is an even vector,
conditions 1 and 2 imply that $r$ satisfies
$\mathbf{A}r\equiv 0$ mod~$2$, so Remark~\ref{invariance-of-parity} guarantees
that the parity of $r$ along $\theta$ only depends on the \nword{\FF_2}{homology}
class~$[\theta]$ of the curve~$\theta$.
Hence, the parity of $r$ can be viewed as a cohomology class,
\begin{align*}
    &\parity_{(-)}(r) \colon [\theta]\mapsto \parity_{\theta}(r),\\
    &\parity_{(-)}(r) \in \Hom\bigl(H_1(M;\FF_2),\; \FF_2\bigr) = H^1(M;\FF_2).
\end{align*}
By Proposition~\ref{neumanns-exact-sequences},
$\parity_{(-)}(r)=\gamma'_2\iota(\mathbf{w})$ for some representing
element~$\mathbf{w}\in \Ker_\ZZ\mathbf{L}$.
Writing $\mathbf{w}=(w,w',w'')$, using \eqref{Kronecker-pairing-gamma'2} and
then re-indexing the sum, we get
\begin{align}
    \parity_\theta(r)=
    \bigl({\gamma'_2}(\mathbf{w})\bigr)([\theta]) &=
    \sum_{\square\in Q(\triang)} \notag
        \left[\mathbf{w}(\square)\right]_2 \bigl(G_2(\theta,\square'') - G_2(\theta,\square')\bigr)\\
    &= \label{relating-gamma'_2-to-parity}
    \sum_{\square\in Q(\triang)}
        G_2(\theta,\square) \bigl[\mathbf{w}(\square')-\mathbf{w}(\square'')\bigr]_2\\
    &= \parity_\theta\bigl(w'-w'', w''-w, w-w'\bigr).\notag
\end{align}
Since $\theta$ is arbitrary, the vector~$(w'-w'', w''-w, w-w')^\transp\in\Ker\mathbf{A}$ has the
same parity as $r$.
Therefore, $s := r - (w'-w'', w''-w, w-w')^\transp$ has even parity along all
normal curves without backtracking.
In other words, $s$ satisfies conditions 1, 2 and 4.

We shall now deal with condition 3 which concerns peripheral angle-holonomies.
We consider first the case of a single cusp ($k=1$)
with oriented curves $\mu$, $\lambda$ satisfying
$\intersection(\mu,\lambda)=1$ and with $d=(d_\mu, d_\lambda)^\transp$ for
arbitrarily fixed even integers~$d_\mu,d_\lambda$.
Since the parity condition holds for the curves~$\mu$ and $\lambda$, we have
$\mathbf{G_\partial}s = (a_\mu, a_\lambda)^\transp$
for some even integers $a_\mu, a_\lambda$.
Using the leading-trailing deformations $l_\mu$ and $l_\lambda$ and
Lemma~4.4 of Futer--Gu\'eritaud~\cite{futer-gueritaud},
we see
that the integer vector
\begin{equation}\label{eq:eta-that-works-for-the-lemma}
    \eta := s + \frac{a_\mu-d_\mu}{2} l_\lambda - \frac{a_\lambda-d_\lambda}{2} l_\mu
\end{equation}
satisfies $\mathbf{G_\partial}\eta = (d_\mu, d_\lambda)^\transp$
and thus all of the conditions 1--3.
In the general case, when $k>1$, such adjustments can be performed independently
for each toroidal end of $\triang$, since curves lying in different components
have zero intersection.

We must now prove that the integer vector~$\eta$
defined in \eqref{eq:eta-that-works-for-the-lemma}
still has even parity along all curves in general position
with respect to $\triang$ and without backtracking.
To this end, note that $\eta$ differs from $s$ by an integer linear combination
of vectors of the form $l_\psi$ for
$\psi\in\{\mu_j,\lambda_j\}_{j=1}^k$.
Hence, Remark~\ref{two-curves} implies that $\eta$ has even parity
and thus satisfies all conditions~1--4.

Lastly, any two vectors $\eta$ satisfying conditions 1--4 with the same $c$ and $d$
must differ by an element satisfying 1--4 with $c=0$ and $d=0$; that the set of such elements
is exactly $\Imag_\ZZ\mathbf{L}^\transp$ follows from Lemma~\ref{neumann-6.1}.
\end{proof}

\begin{proof}[Proof of Theorem~\ref{generalization-of-6.1}]
Let $c$ and $d$ be even vectors satisfying the conditions of the theorem.
If $\eta$ is given by Lemma~\ref{neumann-6.1} and $\tilde{\eta}$ is
obtained by an application of Lemma~\ref{homogenous-generalization-of-neumanns6.1} with
$\tilde{c} = c-\mathbf{2}\in\Ker p$ and $\tilde{d}=d$, then
$\eta+\tilde{\eta}$ satisfies all conditions 1--4 of Theorem~\ref{generalization-of-6.1}.
\end{proof}

\subsection{Truncated triangulations and rectangular cocycles}
\label{truncated-triangulations}
\hyphenation{ma-ni-fold}
We are now going to recall two well-known cell decompositions of $M$ 
 induced by
the triangulation $\triang$.
The first of these decomposition is the (polyhedral) cell complex~$\triang_0$
whose \nword{3}{cells} are \emph{truncated tetrahedra} which result
from slicing off the ideal vertices of the tetrahedra of $\triang$.
In particular, the boundary 
is triangulated into the cutoff
triangles of the truncated tetrahedra; we denote
this boundary triangulation $\partial\triang_0$ and treat it as a subcomplex of $\triang_0$.

We shall also need the cell complex~$\triang_{00}$ consisting of
\emph{doubly truncated tetrahedra} together
with solid cylinders around the edges of $\triang$.
The doubly truncated tetrahedra of~$\triang_{00}$ result from the removal of
neighbourhoods of vertices and edges of the tetrahedra of $\triang$.
The cell complex~$\triang_{00}$ has three types of \nword{1}{cells}, which
are shown in Figure~\ref{fig:doubly-truncated}.
\begin{figure}[tb]
    \centering
    \begin{tikzpicture}
        \node[anchor=south west,inner sep=0] (image) at (0,0)
            {\includegraphics[width=0.4\columnwidth]{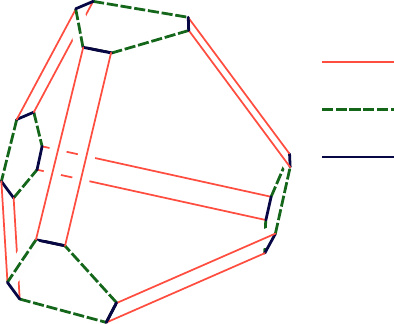}};
        \begin{scope}[x={(image.south east)},y={(image.north west)}]
            \node[anchor=west] at (1.01, 0.805)  {long edges parallel to the edges of $\triang$};
            \node[anchor=west] at (1.01, 0.66)  {medium edges on the boundary torus};
            \node[anchor=west] at (1.01, 0.51)  {short edges encircling the edges of $\triang$};
        \end{scope}
    \end{tikzpicture}
    \caption{%
    A doubly truncated tetrahedron in $\triang_{00}$ and the three types of its edges.
    }\label{fig:doubly-truncated}
\end{figure}
There are also four types of faces (\nword{2}{cells} in the cell complex~$\triang_{00}$):
\emph{large hexagonal faces} contained in the faces of the original triangulation $\triang$,
\emph{boundary hexagons} at the vertex truncation locus, \emph{rectangles} along the edges of the
tetrahedra of $\triang$, and \emph{edge discs} capping off the cylinders about the edges of $\triang$.
The boundary complex~$\partial\triang_{00}$ contains the edge discs and boundary hexagons
as its \nword{2}{cells}.

\begin{definition}[Rectangle map]
Let $\alpha\in\Ker\mathbf{G}_2\subset\FF_2^{Q(\triang)}$.
We define $\rect(\alpha)\in C^2(\triang_{00}, \partial\triang_{00}; \FF_2)$
to be the \nword{2}{cocycle} satisfying
$\bigl(\rect(\alpha)\bigr)(r) = \alpha(\square)$ whenever $r$ is a rectangle
parallel to the normal quadrilateral~$\square$, and $\bigl(\rect(\alpha)\bigr)(f)=0$
for any \nword{2}{cell}~$f$ which is not a rectangle.
\end{definition}
Observe that $\rect(\alpha)$ takes equal values on opposite rectangles, so
$\delta^2\bigl(\rect(\alpha)\bigr)$ contributes $0$ to the interiors of the
doubly truncated tetrahedra.
Moreover, the assumption that $\alpha\in\Ker\mathbf{G}_2$ implies that the coboundary
of $\rect(\alpha)$ also vanishes on the solid cylinders about the edges of $\triang$.
Therefore, $\rect(\alpha)$ is a relative \nword{2}{cocycle}
and we have a \nword{\FF_2}{linear} map
\begin{equation}\label{def-rect}
    \rect: \Ker\mathbf{G}_2 \to Z^2(\triang_{00}, \partial\triang_{00}; \FF_2),
    \qquad
    \alpha \mapsto \rect(\alpha).
\end{equation}

\subsubsection{The fanning construction and its inverse}
We are now going to establish a few elementary properties
of cocycles in the image of the map~$\rect$ of \eqref{def-rect}.
In Theorem~\ref{homogenous-preimage} below, we show that
cocycles of this form suffice to represent all cohomology
classes in $H^2(M,\partial M;\FF_2)$.

\begin{definition}[The fanning construction]
For a large hexagonal face~$F$ of $\triang_{00}$, denote by
$b_F\in C^1(\triang_{00}, \partial\triang_{00}; \FF_2)$ the cochain
whose value on a long edge is the (modulo 2) number of times
this long edge is incident to $F$ and which vanishes everywhere else.
For an arbitrary relative
\nword{2}{cochain}~$\sigma\in C^2(\triang_{00}, \partial\triang_{00}; \FF_2)$,
we set
\begin{equation}\label{def-fanning}
    Y(\sigma) = \sigma + \sum_{F\;:\;\sigma(F)=1} \delta^1(b_F),
\end{equation}
where the sum is over all large hexagonal faces in
the support of $\sigma$. See Figure~\ref{fig:fanning} for an illustration.
\end{definition}

\begin{figure}[tb]
    \centering
    \begin{tikzpicture}
        \node[anchor=south west,inner sep=0] (image) at (0,0)
            {\includegraphics[width=0.8\columnwidth]{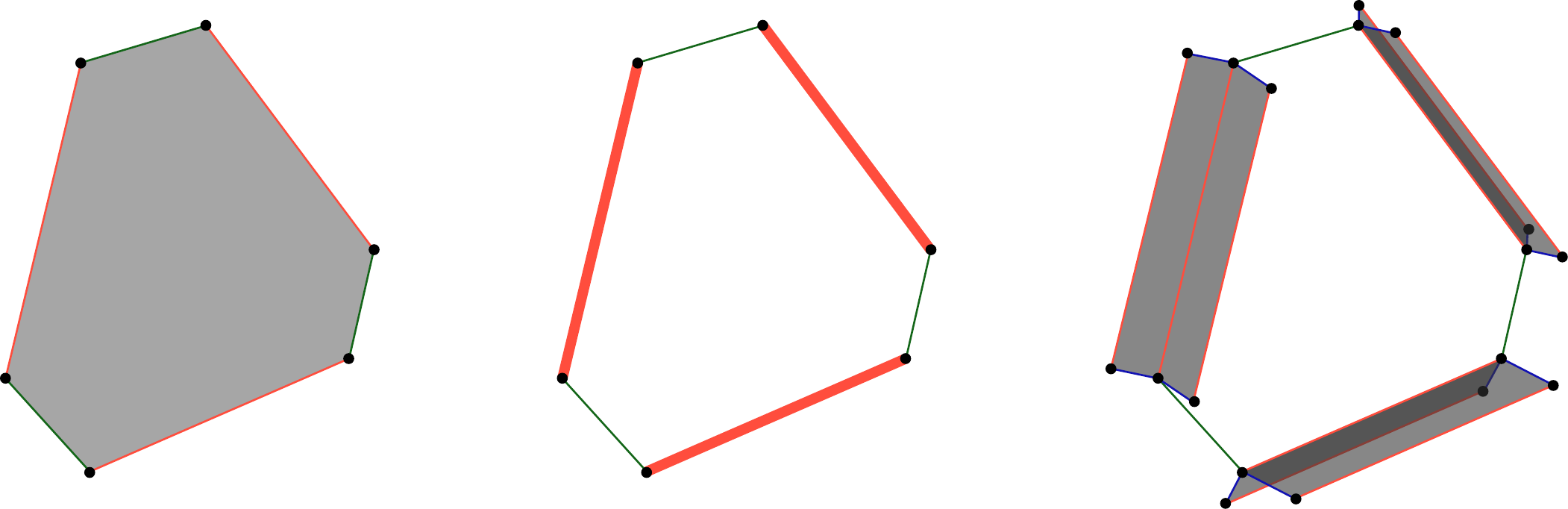}};
        \begin{scope}[x={(image.south east)},y={(image.north west)}]
            \node[anchor=south] at (0.2, 0.7) {$F$};
            \node[anchor=south] at (0.12, 0.4) {$\sigma(F)=1$};
            \node[anchor=south] at (0.48, 0.4) {$b_F$};
            \node[anchor=south] at (0.875, 0.3) {$\sigma + \delta^1(b_F)$};
        \end{scope}
    \end{tikzpicture}
    \caption{
    \textsc{Left:} A large hexagonal face~$F$ with $\sigma(F)=1$.
    \textsc{Centre:} The \nword{1}{cochain} $b_F$ is supported on the three long edges of the face $F$
        (shown in bold).
    \textsc{Right:} $Y(\sigma) = \sigma + \delta^1(b_F)$ vanishes on $F$ but may have a
        non-zero value on rectangular \nword{2}{cells} of $\triang_{00}$.
    }\label{fig:fanning}
\end{figure}

When $\sigma$ is a cocycle, equation~\eqref{def-fanning} shows that
$Y(\sigma)$ is cohomologous to $\sigma$.
It follows that all cohomology classes in $H^2(M,\partial M;\FF_2)$ are representable
with cocycles in $Z^2(\triang_{00}, \partial\triang_{00}; \FF_2)$ supported on the rectangles
only.

\begin{remark}
Suppose that $\sigma\in C^2(\triang_{00}, \partial\triang_{00}; \FF_2)$ is supported on large
hexagonal faces only. Let $F_1$ and $F_2$ be two large hexagonal faces incident to the same
doubly truncated tetrahedron and let $r$ be the rectangle between $F_1$ and $F_2$.
Then $\bigl(Y(\sigma)\bigr)(r) = \sigma(F_1)+\sigma(F_2)$.
\label{rem:facial-difference}
\end{remark}

\begin{remark}\label{linearity-of-Y}
If $\sigma_1, \sigma_2\in C^2(\triang_{00}, \partial\triang_{00}; \FF_2)$ are cochains
supported on large hexagonal faces only, then $Y(\sigma_1+\sigma_2) = Y(\sigma_1)+Y(\sigma_2)$.
\end{remark}

We now describe a partial inverse to the fanning construction.
Let $\mathcal{S}$ be the vector subspace of $\FF_2^{Q(\triang)}$ cut out by the equations
\begin{equation}\label{def-of-subspace-S}
\left\{
    \begin{aligned}
        s(\square) + s(\square') + s(\square'') &= 0\ \text{for all}\ \square,\\
        \parity_\theta(s) &= 0\ \text{for every curve~$\theta$ in normal position.}
    \end{aligned}\right.
\end{equation}
Observe that every $s\in\mathcal{S}$ satisfies $\mathbf{G}_2s=0$, which
is seen by considering small loops encircling each of the edges of $\triang$.
In particular, the map~$\rect$ of \eqref{def-rect} is defined on $\mathcal{S}$.

\begin{lemma}
For any $s\in\mathcal{S}$, there exists a cocycle $\sigma\in Z^2(\triang_{00}, \partial\triang_{00}; \FF_2)$
supported on the large hexagonal faces only and satisfying $Y(\sigma)=\rect(s)$.
\label{defanning}
\end{lemma}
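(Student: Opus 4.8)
The plan is to rewrite the equation $Y(\sigma)=\rect(s)$ as an explicit linear system over $\FF_2$ by means of Remark~\ref{rem:facial-difference}, to solve that system in two stages, and to identify the global obstruction with the family of parities $\parity_\theta(s)$, which vanish precisely because $s\in\mathcal{S}$. First I would observe that a cochain supported on the large hexagonal faces of $\triang_{00}$ is just a function $\sigma$ on the faces of $\triang$, and that for any such $\sigma$ the cochain $Y(\sigma)$ vanishes on every large hexagonal face and on every cell of $\partial\triang_{00}$; this is the very purpose of the fanning construction (on a single hexagonal face it cancels $\sigma$, cf.\ Figure~\ref{fig:fanning}, and $Y$ is additive on hexagon-supported cochains, cf.\ Remark~\ref{linearity-of-Y}). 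Hence $Y(\sigma)=\rect(s)$ is equivalent to the equalities on the rectangular $2$-cells, and by Remark~\ref{rem:facial-difference} these say
\[
 \sigma(F_1)+\sigma(F_2)=s(\square)
\]
for every tetrahedral edge, where $\square$ is the quad facing it and $F_1,F_2$ are the two faces of the ambient tetrahedron meeting along it. Any $\sigma$ solving this is automatically a relative $2$-cocycle: $\delta^2\sigma$ vanishes on the solid cylinders about the edges, while on a doubly truncated tetrahedron, adding the equations for a pair of opposite edges (which face the same quad) gives $\sum_{i=1}^4\sigma(F_i)=s(\square)+s(\square)=0$.

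To solve the displayed system I would work in two steps. Fix a tetrahedron $\tet$ with faces $F_1,\dots,F_4$; its six edges realise the three partitions of $\{F_1,\dots,F_4\}$ into two pairs, and these correspond bijectively to the three quad types of $\tet$, so the six equations attached to $\tet$ are mutually consistent exactly when $s(\square)+s(\square')+s(\square'')=0$. Granting this, there is a function $a_\tet$ on the faces of $\tet$, unique up to adding the constant $1$, with $a_\tet(F_i)+a_\tet(F_j)=s(\square)$ whenever $F_i\cap F_j$ faces $\square$. A global solution must then be of the form $F\mapsto a_\tet(F)+c_\tet$ for a correction $c_\tet\in\FF_2$ at each $\tet$, and it exists iff the $c_\tet$ can be chosen so that $c_\tet+c_{\tet'}=b(F):=a_\tet(F)+a_{\tet'}(F)$ for the two tetrahedra $\tet,\tet'$ glued along each face $F$. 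Viewing $b$ as a $1$-cochain on the $1$-skeleton of the dual $2$-complex $X$ (whose vertices are the tetrahedra and whose edges are the faces of $\triang$), this is the coboundary equation $\delta^0 c=b$, solvable iff $\langle b,z\rangle=0$ for every $1$-cycle $z$ of that graph.

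Evaluating $b$ on cycles is where the hypothesis $s\in\mathcal{S}$ re-enters. An $\FF_2$-coefficient $1$-cycle of the dual graph is an edge-disjoint union of circuits; a circuit passing through $\tet_0,\tet_1,\dots,\tet_m=\tet_0$ and crossing the face $F_i$ at its $i$-th step (indices mod $m$) decorates canonically to a normal closed curve $\theta$, the pivot at $\tet_i$ being the uniquely determined edge $F_i\cap F_{i+1}$ of $\tet_i$; since the faces $F_1,\dots,F_m$ are pairwise distinct, $\theta$ crosses each face at most once and in particular is without backtracking. Regrouping $\langle b,\text{circuit}\rangle=\sum_i b(F_i)$ tetrahedron by tetrahedron, the contribution of $\tet_i$ is $a_{\tet_i}(F_i)+a_{\tet_i}(F_{i+1})=s(\square_i)$, where $\square_i$ is the quad of $\tet_i$ facing that pivot, so that
\[
 \langle b,\text{circuit}\rangle=\sum_i s(\square_i)=\sum_{\square\in Q(\triang)} G_2(\theta,\square)\,s(\square)=\parity_\theta(s)=0.
\]
Summing over circuits gives $\langle b,z\rangle=0$ for every $1$-cycle, so the corrections $c_\tet$ exist, and with them the desired cocycle $\sigma$.

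The bulk of the work will be in making the dictionary of the previous paragraph watertight: checking that circuits of the dual graph are exactly normal closed curves without backtracking in the sense underlying Definition~\ref{def-parity} (treating carefully loop-edges coming from self-glued faces, and tetrahedra that get revisited along a circuit), and that the regrouped sum really coincides with $\sum_\square G_2(\theta,\square)\,s(\square)$. Otherwise the argument is formal, and the two-stage structure makes the roles of the two defining relations of $\mathcal{S}$ transparent: the tetrahedral relation is exactly what lets each $a_\tet$ exist, and the vanishing of all parities is exactly what kills the gluing obstruction $b$.
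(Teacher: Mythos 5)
Your argument is correct and is in essence the paper's own propagation proof, reorganised into the language of the dual graph: you both reduce $Y(\sigma)=\rect(s)$ to the system $\sigma(F_1)+\sigma(F_2)=s(\square)$ on rectangles, invoke the tetrahedral relation for local consistency within each tetrahedron, and invoke the parity hypothesis to kill the gluing monodromy around closed curves. The paper simply fixes $\sigma(F_0)$ arbitrarily and propagates along curves, checking that the holonomy around loops is $\parity_\theta(s)=0$; you instead solve locally up to a constant per tetrahedron and phrase the gluing as a coboundary problem $\delta^0 c=b$ on the $1$\nobreakdash-skeleton of the dual $2$\nobreakdash-complex $X$ (the same complex the paper already uses in Section~\ref{neumann-review}), which is an equivalent but more explicit bookkeeping of the same obstruction. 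Two small remarks: the paper gets the cocycle property of $\sigma$ for free (it differs from the cocycle $\rect(s)$ by a coboundary by construction of $Y$), whereas you check $\delta^2\sigma=0$ directly from the rectangle equations, which is also fine; and your parenthetical worries about loop-edges of the dual graph and revisited tetrahedra are genuine points that must be handled, but they arise in the paper's propagation argument as well and present no actual difficulty since the closed normal curve produced still has no backtracking.
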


\begin{proof}
Consider a fixed element $s\in\mathcal{S}$.
Choose any large hexagonal face $F_0$ in $\triang_{00}$ and set $\sigma(F_0)\in\FF_2$ arbitrarily.
We are going to continue $\sigma$ to other large hexagonal faces using the following rule.
When $F_1$ and $F_2$ are two large hexagonal faces incident to the same doubly truncated tetrahedron
and $r$ is the rectangle between $F_1$ and $F_2$, then $\sigma(F_1)+\sigma(F_2)=\bigl(\rect(s)\bigr)(r)$.

Observe that the equation $s(\square)+s(\square')+s(\square'')=0$ implies
that the above rule determines a consistent labelling of the large hexagonal faces of each doubly
truncated tetrahedron in $\triang_{00}$ with elements of $\FF_2$.
In order to see that this labelling is globally consistent, consider
an unoriented closed curve~$\theta$ disjoint from the closures of the cylinders
about the edges of $\triang$.
We may assume that $\theta$ is based at the initial face $F_0$ and crosses
large hexagonal faces transversely.
Since $\parity_\theta(s)=0$, the face labelling
continued along $\theta$ agrees with the initial labelling of $F_0$.
In this way, we obtain a well-defined
cochain $\sigma\in C^2(\triang_{00}, \partial\triang_{00}; \FF_2)$
supported on large hexagonal faces only.
What is more, Remark~\ref{rem:facial-difference} implies that
whenever $r$ is a rectangle between two large hexagonal faces $F_1$ and $F_2$,
we must have $Y(\sigma)(r) = \sigma(F_1)+\sigma(F_2)=\bigl(\rect(s)\bigr)(r)$.
Therefore, $Y(\sigma)=\rect(s)$.
Since $\sigma$ differs from $Y(\sigma)$ by a coboundary and $\rect(s)$ is a cocycle,
$\sigma$ is also a cocycle, as required.
\end{proof}

\begin{remark}\label{ltds-mod-2-contained-in-S}
For any closed curve~$\psi$ in general position with respect to $\triang$ and without backtracking,
the modulo~$2$ leading-trailing deformation~$[l_\psi]_2$
is an element of the space $\mathcal{S}$ defined by~\eqref{def-of-subspace-S}.
Indeed, the equations~$[l_\psi]_2(\square)+[l_\psi]_2(\square')+[l_\psi]_2(\square'')=0$
arise as a direct consequence of~\eqref{curve-ltd-mod-2}.
Moreover, Remark~\ref{two-curves} states that
$\parity_\theta\bigl([l_\psi]_2\bigr)=0$ for any closed curve~$\theta$
in general position with respect to $\triang$ and without backtracking.
\end{remark}

The following lemma provides a cohomological interpretation of
the cocycle~$\rect([l_\theta]_2)$.
\begin{lemma}
Let $\theta$ be a closed curve in general position with respect to $\triang$ and without
backtracking.
Then the cohomology class represented by $\rect([l_\theta]_2)$
in $H^2(M,\partial M;\FF_2)$ coincides with the
Poincar\'e dual of $[\theta]\in H_1(M;\FF_2)$.
\label{rectal-lemma}
\end{lemma}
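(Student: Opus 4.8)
The plan is to realise the Poincar\'e dual of $[\theta]$ by an explicit cocycle inside the doubly truncated triangulation $\triang_{00}$, and then to match that cocycle, through the fanning construction, with $\rect([l_\theta]_2)$.

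First I would push $\theta$ into $\triang_{00}$. Because $\theta$ is in general position with respect to $\triang$ and has no backtracking, it can be isotoped so that inside each doubly truncated tetrahedron it travels through the interior, from the large hexagonal face contained in the face of $\triang$ through which $\theta$ enters to the one contained in the face through which it leaves, and meets no other $2$-cell of $\triang_{00}$; in particular it stays away from the rectangles, the edge cylinders, and the boundary. Let $c_\theta\in C^2(\triang_{00},\partial\triang_{00};\FF_2)$ be the cochain given by $c_\theta(f)=\#(\theta\cap f)\bmod 2$. Then $c_\theta$ is supported on large hexagonal faces, and on the hexagon lying in a face $F$ of $\triang$ it equals the mod-$2$ number of times $\theta$ crosses $F$. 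A routine transversality argument shows that $c_\theta$ is a relative $2$-cocycle --- its coboundary on a $3$-cell counts, mod $2$, the boundary points of the compact $1$-manifold $\theta\cap(\text{cell})$ --- and that it represents $\PD([\theta])$, since its Kronecker pairing with any relative $2$-cycle is the mod-$2$ intersection number of that cycle with $\theta$, which is the defining property of the Poincar\'e dual.

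The technical heart of the argument is a local identity in a single tetrahedron. Fix a tetrahedron $\tet$, a tetrahedral edge $e$ of $\tet$ lying on the faces $F_1,F_2$ of $\tet$, and let $\square\subset\tet$ be the quad facing $e$; by the observation that $\rect$ takes equal values on opposite rectangles, $\square$ is exactly the quad to which the rectangle of $\triang_{00}$ along $e$ is parallel. I would prove that
\[
    c_\theta(f_1)+c_\theta(f_2)\;=\;G_2(\theta,\square')+G_2(\theta,\square'')\;=\;[l_\theta]_2(\square)\pmod 2 ,
\]
where $f_1,f_2$ are the large hexagonal faces in $F_1,F_2$. The proof is passage by passage: a passage of $\theta$ through $\tet$ enters some face $A$ and leaves a distinct face $B$; it adds $1$ to the left-hand side precisely when exactly one of $A,B$ lies in $\{F_1,F_2\}$, and it adds $1$ to the $G_2$-coordinate of the quad facing the edge $A\cap B$ and to no other $G_2$-coordinate of $\tet$. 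Since $\{\square',\square''\}$ is precisely the pair of quads facing the four edges $F_1\cap F_3$, $F_1\cap F_4$, $F_2\cap F_3$, $F_2\cap F_4$, every passage contributes equally to the two outer expressions, and summing over all passages gives the identity; the last equality is the definition of $[l_\theta]_2$. Getting this bookkeeping exactly right --- pinning down which quad each rectangle is parallel to, keeping track of the cyclic order $\square\to\square'\to\square''$, and dealing with faces that are identified to themselves --- is the step I expect to cost the most care.

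Finally I would assemble the pieces. By Remark~\ref{ltds-mod-2-contained-in-S}, $[l_\theta]_2\in\mathcal{S}$, so Lemma~\ref{defanning} furnishes a cocycle $\sigma\in Z^2(\triang_{00},\partial\triang_{00};\FF_2)$ supported on the large hexagonal faces with $Y(\sigma)=\rect([l_\theta]_2)$; its construction fixes $\sigma$ on one hexagon and extends by the rule that, across the rectangle $r$ between two large hexagonal faces $f_1,f_2$ of a common doubly truncated tetrahedron, $\sigma(f_1)+\sigma(f_2)=\bigl(\rect([l_\theta]_2)\bigr)(r)$ (cf.\ Remark~\ref{rem:facial-difference}). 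The identity above says that the hexagon-supported cochain $c_\theta$ obeys the same rule, so making the one free choice agree with $c_\theta$ and using connectedness of $M$ forces $\sigma=c_\theta$. Hence $\rect([l_\theta]_2)=Y(\sigma)=Y(c_\theta)$, which is cohomologous to $c_\theta$ because $c_\theta$ is a cocycle (equation~\eqref{def-fanning}). Therefore the class of $\rect([l_\theta]_2)$ in $H^2(M,\partial M;\FF_2)$ equals that of $c_\theta$, namely $\PD([\theta])$.
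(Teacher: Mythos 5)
Your proof is correct and follows essentially the same route as the paper: push $\theta$ into $\triang_{00}$, represent $\PD([\theta])$ by the hexagon-supported crossing cocycle $c_\theta$, and then show via a passage-by-passage bookkeeping (no backtracking forces each visit to cross two faces with a distinguished common edge) that the fanning construction turns $c_\theta$ into $\rect([l_\theta]_2)$. The paper reaches the same conclusion a little more directly: rather than invoking Lemma~\ref{defanning} to produce a $\sigma$ with $Y(\sigma)=\rect([l_\theta]_2)$ and then identifying $\sigma$ with $c_\theta$, it simply applies Remark~\ref{rem:facial-difference} to read off $Y(C_\theta)$ rectangle by rectangle and observes that each visit contributes identically to $Y(C_\theta)$ and to $\rect([l_\theta]_2)$; once you have your key local identity $c_\theta(f_1)+c_\theta(f_2)=\bigl(\rect([l_\theta]_2)\bigr)(r)$, the detour through defanning is not needed.
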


\begin{proof}
We can assume that the curve $\theta$ misses the solid cylinders in $\triang_{00}$
and only travels through the interiors of the doubly truncated tetrahedra,
cutting transversely across their large hexagonal faces.
Then the Poincar\'e dual of $[\theta]$ is represented by the cellular \nword{2}{cocycle}
$C_\theta\in Z^2(\triang_{00},\partial\triang_{00};\FF_2)$
whose value on a large hexagonal face is the number of times $\theta$ crosses that face,
modulo~$2$.
Applying the fanning construction, we obtain a cocycle~$Y(C_\theta)$
representing the Poincar\'e dual of $[\theta]$ and supported
only on the rectangular \nword{2}{cells} of $\triang_{00}$.
Since we disallow backtracking, upon every visit to a tetrahedron~$\tet$,
the curve~$\theta$ will cross exactly two faces of $\tet$.
Considering one such visit at a time, we denote by $e$ the tetrahedral edge of $\tet$
common to these two faces and let $\square$ be the normal quadrilateral faced by $e$.
By Remark~\ref{rem:facial-difference}, the visit contributes to $Y(C_\theta)$
the value of zero on the two opposing rectangles parallel to $\square$ and the value
$1\in\FF_2$ on the remaining four rectangles in $\tet$.
This agrees with the contribution of the visit to $\rect([l_\theta]_2)$.
Since both cocycles are sums of such contributions over all passages of $\theta$
through the tetrahedra of $\triang_{00}$,
$Y(C_\theta)$ coincides with $\rect([l_\theta]_2)$.
\end{proof}

\begin{corollary}\label{rect-edge-ltds-are-coboundaries}
For any edge~$E$ of the triangulation~$\triang$,
the cocycle~$\rect([l_E]_2)$ represents the trivial class in $H^2(M,\partial M;\FF_2)$.
\end{corollary}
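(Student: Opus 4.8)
The plan is to deduce Corollary~\ref{rect-edge-ltds-are-coboundaries} as an immediate consequence of Lemma~\ref{rectal-lemma}. The key observation is that the leading-trailing deformation $l_E$ associated to an edge~$E$ of $\triang$ is, up to a choice of normal representative, the same as the leading-trailing deformation $l_\theta$ of a small closed curve~$\theta$ encircling that edge once; such a loop is contractible in $M$ (it bounds a meridian disc of the solid cylinder about~$E$ in $\triang_{00}$), hence null-homologous.

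First I would make precise the identification $[l_E]_2 = [l_\theta]_2$ for a meridian loop~$\theta$ of the edge~$E$. Recall that $l_E(\square) = G(E,\square'') - G(E,\square')$ by \eqref{definition-leading-trailing}, while for a normal curve $l_\theta(\square) = G_2(\theta,\square'') - G_2(\theta,\square')$ by \eqref{curve-ltd-mod-2}. A loop encircling~$E$ once passes through exactly the tetrahedra incident to~$E$, and each passage crosses the two faces meeting along a tetrahedral edge in the edge class of~$E$, so its completeness coefficients $G_2(\theta,\square)$ reduce modulo~$2$ to $G(E,\square) \bmod 2$ for every quad~$\square$; consequently $[l_\theta]_2 = [l_E]_2$ in $\FF_2^{Q(\triang)}$. (This is exactly the sort of small-loop argument already invoked in the excerpt, e.g.\ when showing $\mathbf{G}_2\alpha = 0$ for $\alpha \in \AFpar$ and when verifying $\mathbf{G}_2 s = 0$ for $s \in \mathcal{S}$.)

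Given this, Lemma~\ref{rectal-lemma} applies to $\theta$ and tells us that the class of $\rect([l_E]_2) = \rect([l_\theta]_2)$ in $H^2(M,\partial M;\FF_2)$ is the Poincar\'e dual of $[\theta] \in H_1(M;\FF_2)$. Since~$\theta$ bounds a disc in~$M$ (the capping edge-disc of the cylinder around~$E$ in~$\triang_{00}$), we have $[\theta] = 0$, and the Poincar\'e dual of $0$ is~$0$; hence $\rect([l_E]_2)$ represents the trivial class, as claimed.

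I do not expect a genuine obstacle here — this is a short corollary. The only point requiring any care is the normal-position bookkeeping identifying $[l_E]_2$ with the mod-$2$ leading-trailing vector of a meridian of~$E$; but this is routine and mirrors identifications already used in the text, so it can be dispatched in a sentence or two. Alternatively, one could bypass Lemma~\ref{rectal-lemma} entirely and argue directly that $\rect([l_E]_2)$ is a coboundary by exhibiting a $1$-cochain trivialising it — indeed the fanning/de-fanning machinery of Lemma~\ref{defanning} together with Remark~\ref{ltds-mod-2-contained-in-S} produces a cochain~$\sigma$ supported on large hexagonal faces with $Y(\sigma) = \rect([l_E]_2)$, and one checks $\sigma$ itself is a coboundary because the meridian loop through which the face-labelling is propagated is contractible — but routing through the Poincar\'e-duality statement of Lemma~\ref{rectal-lemma} is cleaner.
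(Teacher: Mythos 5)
Your proof is correct and uses essentially the same argument as the paper: take a small loop $\varepsilon$ encircling~$E$ once, identify $[l_E]_2 = [l_\varepsilon]_2$, and apply Lemma~\ref{rectal-lemma} together with the fact that $\varepsilon$ is null-homologous. The extra detail you give on the identification of the mod-$2$ leading--trailing vectors and the alternative de-fanning route are fine, but the core argument coincides with the paper's.
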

\begin{proof}
We may consider a closed curve~$\varepsilon$ contained in a small neighbourhood
of $E$ and encircling $E$ once, as shown in the right panel of Figure~\ref{fig:coboundary-ltd}.
In this way, $[l_E]_2 = [l_\varepsilon]_2$.
However, the curve~$\varepsilon$ is null-homologous,
so $\rect([l_E]_2)=\rect([l_\varepsilon]_2)$
represents the trivial class in $H^2(M,\partial M;\FF_2)$.
\end{proof}

\begin{theorem}\label{homogenous-preimage}
Let $\mathcal{S}\subset\FF_2^{Q(\triang)}$ be the set of solutions to the
linear equations~\eqref{def-of-subspace-S}.
\begin{enumerate}[(i)]
\item\label{item:rectonto}
Every cohomology class in $H^2(M,\partial M;\FF_2)$ is represented
by a cocycle of the form $\rect(s)$ for some $s\in\mathcal{S}$.
\item\label{item:KerLTD}
For $s\in\mathcal{S}$, the cocycle $\rect(s)$ is zero
in $H^2(M,\partial M;\FF_2)$ if and only if $s\in\Imag\mathbf{L}_2^\transp$.
\end{enumerate}
\end{theorem}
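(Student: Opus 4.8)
The plan is to leverage Neumann's exact sequences (Proposition~\ref{neumanns-exact-sequences}) together with Poincar\'e duality and the geometric interpretation of $\rect([l_\theta]_2)$ furnished by Lemma~\ref{rectal-lemma}. For part~\eqref{item:rectonto}, I would argue as follows. Every class in $H^2(M,\partial M;\FF_2)$ is Poincar\'e dual to a class in $H_1(M;\FF_2)$, which in turn is represented by some closed curve $\theta$ in $M$; after a homotopy we may put $\theta$ in general position with respect to $\triang$, with no backtracking, and missing the solid cylinders around edges. By Lemma~\ref{rectal-lemma}, the cocycle $\rect([l_\theta]_2)$ then represents precisely this Poincar\'e dual class. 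By Remark~\ref{ltds-mod-2-contained-in-S}, the vector $[l_\theta]_2$ lies in $\mathcal{S}$, so setting $s = [l_\theta]_2$ exhibits the class in the required form. This handles surjectivity cleanly.

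For part~\eqref{item:KerLTD}, one direction is Corollary~\ref{rect-edge-ltds-are-coboundaries}: each $\rect([l_{E}]_2)$ is a coboundary, and by $\FF_2$-linearity of $\rect$ so is $\rect(s)$ for any $s\in\Imag\mathbf{L}_2^\transp$, since such $s$ is a sum of rows $[l_{E}]_2=[\mathbf{L}_2]_{E,\bullet}$. The substantive direction is the converse: if $s\in\mathcal{S}$ and $\rect(s)$ is null-cohomologous, then $s\in\Imag\mathbf{L}_2^\transp$. I would prove this by a dimension/rank count combined with a well-definedness argument. Consider the composite assignment $s\mapsto[\rect(s)]\in H^2(M,\partial M;\FF_2)$ restricted to $\mathcal{S}$; part~\eqref{item:rectonto} shows it is onto. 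It therefore suffices to show that its kernel has dimension exactly $\dim\mathcal{S}-\dim H^2(M,\partial M;\FF_2)$, and that $\Imag\mathbf{L}_2^\transp$ is contained in this kernel with the matching dimension. The inclusion $\Imag\mathbf{L}_2^\transp\subseteq\mathcal{S}$ is Remark~\ref{ltds-mod-2-contained-in-S} (applied to edge-encircling curves, which span the row space over $\FF_2$), and $\dim\Imag\mathbf{L}_2^\transp=\operatorname{rank}_{\FF_2}\mathbf{L}_2$. The equations cutting out $\mathcal{S}$ in \eqref{def-of-subspace-S} are the homogeneous tetrahedral equations together with vanishing parity along all normal curves; I would identify $\mathcal{S}$ with (the mod-$2$ reduction of) $\Ker\mathbf{L}_2$ modulo the parity-imposing data, or more precisely relate $\mathcal{S}$ to the short exact sequence~\eqref{Neumanns-exact-mod2}. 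In fact, unwinding the definitions, $\mathcal{S}$ should be identifiable with $\Ker\mathbf{L}_2$ intersected with $\Ker\mathbf{A}_2$ — the tetrahedral-sum-zero condition is part of $\Ker\mathbf{A}_2$, and vanishing parity along all curves is (by Remark~\ref{invariance-of-parity}) equivalent to $\gamma'_2$-triviality once $\mathbf{A}_2 s=0$. Then $\mathcal{S}/\Imag\mathbf{L}_2^\transp$ fits into the sequence as the kernel of $\gamma'_2$, which by exactness of~\eqref{Neumanns-exact-mod2} equals $\Imag g_2\cong H_1(M;\FF_2)$. Since $|H_1(M;\FF_2)|=|H^2(M,\partial M;\FF_2)|$ by Poincar\'e--Lefschetz duality with $\FF_2$-coefficients, the dimensions match, and combined with surjectivity from part~\eqref{item:rectonto} this forces the kernel of $s\mapsto[\rect(s)]$ to be exactly $\Imag\mathbf{L}_2^\transp$.

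The main obstacle I anticipate is establishing the precise identification of the space $\mathcal{S}$ and of the quotient $\mathcal{S}/\Imag\mathbf{L}_2^\transp$ with the terms appearing in Neumann's sequence~\eqref{Neumanns-exact-mod2} — in particular verifying that "vanishing parity along every normal curve without backtracking," given the tetrahedral relations, is genuinely equivalent to $\gamma'_2$ sending the class to zero in $H^1(M;\FF_2)$, and that the map $s\mapsto[\rect(s)]$ is compatible with $\gamma'_2\circ(\text{reindex})$ under Poincar\'e duality. Concretely, I would need to check that the Poincar\'e dual of the cohomology class $\gamma'_2(\mathbf{w})\in H^1(M;\FF_2)$ is represented by $\rect\bigl((w'-w'',w''-w,w-w')^\transp\bigr)$, which is exactly the reindexing that appears in \eqref{relating-gamma'_2-to-parity}; Lemma~\ref{rectal-lemma} provides the bridge since both sides are controlled by leading-trailing deformations of curves. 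An alternative, perhaps cleaner route to part~\eqref{item:KerLTD} avoiding the bookkeeping: since $\rect$ is linear and $\mathcal{S}$ is spanned over $\FF_2$ by vectors $[l_\psi]_2$ — this spanning claim would itself need justification, but should follow from the surjectivity in~\eqref{item:rectonto} together with the fact that $\Imag\mathbf{L}_2^\transp$ already accounts for the remaining dimension — one reduces to understanding exactly when a sum $\sum[l_{\psi_i}]_2$ lies in $\Imag\mathbf{L}_2^\transp$, which by Lemma~\ref{rectal-lemma} happens iff $\sum[\psi_i]=0$ in $H_1(M;\FF_2)$ iff the Poincar\'e dual vanishes iff $\rect$ of the sum is a coboundary. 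I would present whichever of these two arguments turns out to require the least new combinatorial verification, but in either case the crux is the faithful translation between the parity/leading-trailing language of Neumann and the cellular cochain language on $\triang_{00}$, mediated by Lemma~\ref{rectal-lemma}.
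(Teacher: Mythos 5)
Your part~\eqref{item:rectonto} is correct and in fact cleaner than the paper's argument: the paper pulls back an arbitrary cocycle on $(\triang_0,\partial\triang_0)$ through $h_0^*$ and then fans it onto rectangles, while you exploit Poincar\'e--Lefschetz duality together with Lemma~\ref{rectal-lemma} and Remark~\ref{ltds-mod-2-contained-in-S} to realise every class directly by a single $[l_\theta]_2\in\mathcal{S}$. The easy direction of part~\eqref{item:KerLTD} agrees with the paper. Where you diverge is the converse of~\eqref{item:KerLTD}: the paper's proof is constructive -- it takes a null-cohomologous $\rect(s)$, defans it via Lemma~\ref{defanning}, pushes forward along $h_0$ to get a coboundary $\Sigma=\sum_{E\in\mathcal{U}}\delta^1(\chi_E)$ on $\triang_0$, and explicitly identifies the pullback-and-fan of each $\delta^1(\chi_E)$ with $\rect([l_E]_2)$ -- whereas you propose a dimension count over $\FF_2$ using Neumann's exact sequence and rank--nullity. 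That strategy can be completed, but your write-up leaves the decisive step open and contains a concrete error.

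The error is the claim that $\mathcal{S}=\Ker\mathbf{L}_2\cap\Ker\mathbf{A}_2$. In fact $\Ker\mathbf{A}_2\subseteq\Ker\mathbf{L}_2$: if $s=(a,b,c)^\transp$ satisfies the tetrahedral relations $a+b+c=0$, then expanding $\mathbf{L}_2 s=(G''_2+G'_2)a+(G_2+G''_2)b+(G'_2+G_2)c$ and substituting $a=b+c$ gives $\mathbf{L}_2 s=\mathbf{G}_2 s$, which already vanishes for $s\in\Ker\mathbf{A}_2$. So your proposed intersection is just $\Ker\mathbf{A}_2$, which strictly contains $\mathcal{S}$: the parity conditions along non-edge-loop curves are a genuine further restriction, cutting $\Ker\mathbf{A}_2$ down by $\dim H^1(M;\FF_2)$. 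The correct relationship is that $\mathcal{S}$ is the kernel of the parity map $\parity_{(-)}:\Ker\mathbf{A}_2\to H^1(M;\FF_2)$ (well-defined by Remark~\ref{invariance-of-parity}); surjectivity of that map, and compatibility with $\gamma'_2$, go through the reindexing $\tau:\mathbf{w}\mapsto(w'-w'',w''-w,w-w')^\transp$ implicit in \eqref{relating-gamma'_2-to-parity}. Once that is in place, the count $\dim\mathcal{S}=\dim\Ker\mathbf{A}_2-\dim H^1(M;\FF_2)$ together with $\dim\Imag\mathbf{L}_2^\transp=\operatorname{rank}\mathbf{L}_2$ and rank--nullity (both $\mathbf{A}_2$ and $\mathbf{L}_2$ have $3N$ columns) does yield $\dim(\mathcal{S}/\Imag\mathbf{L}_2^\transp)=\dim H_1(M;\FF_2)=\dim H^2(M,\partial M;\FF_2)$, and your argument closes. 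But as written, the identification you actually supply for $\mathcal{S}$ is false, and the "main obstacle" you flag is exactly where the proof currently fails.
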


\begin{proof}
In order to prove the first part, choose an arbitrary cohomology
class~$[\sigma]\in H^2(M,\partial M;\FF_2)$ represented by
a cocycle $\sigma\in Z^2(\triang_0, \partial\triang_0; \FF_2)$.
Note that there is a cellular homotopy equivalence of CW-pairs,
\begin{equation}\label{cylinder-collapse}
    h_0 : (\triang_{00},\partial\triang_{00})\xrightarrow{\simeq}(\triang_{0},\partial\triang_{0}),
\end{equation}
which collapses the closures of the cylinders in $\triang_{00}$ into the
corresponding non-boundary edges of $\triang_0$.
Let $\sigma^*=h_0^*(\sigma)\in Z^2(\triang_{00}, \partial\triang_{00}; \FF_2)$
be the pullback of $\sigma$, so that $\sigma^*$ is a \nword{2}{cocycle} supported
on the large hexagonal faces of the doubly truncated tetrahedra of $\triang_{00}$.
Applying the fanning construction, we obtain a
\nword{2}{cocycle}~$Y(\sigma^*)$ supported on rectangles only.
The cocycle condition for $\sigma$ implies that every doubly truncated tetrahedron
will see a non-zero value of $\sigma^*$ on an even number of its large hexagonal faces.
If this number is $0$ or $4$, then Remark~\ref{rem:facial-difference} shows
that $Y(\sigma^*)$ vanishes on all rectangles in the tetrahedron.
If however $\sigma$ takes the value $1\in\FF_2$ on exactly two faces of a
tetrahedron~$\tet$, then $Y(\sigma^*)$ vanishes
on exactly one pair of opposite rectangles in $\tet$
and takes the value $1$ on the four remaining rectangles.
In particular, $Y(\sigma^*)$ takes identical values on opposite rectangles in all cases.
Let $s: Q(\triang) \to \FF_2$ be the vector assigning to every quad~$\square$
the value of $Y(\sigma^*)$ on the rectangles parallel to $\square$,
so that $Y(\sigma^*)=\rect(s)$.
We are now going to show that $s\in\mathcal{S}$.

As observed above, $Y(\sigma^*)$ is either zero on all six rectangles in a
tetrahedron, or it takes the value $1$ on exactly two pairs of opposite rectangles.
This shows that $s(\square)+s(\square')+s(\square'')=0$.
It remains to consider the parity conditions.
Let $\theta$ be a curve in normal position with respect to $\triang$ and without backtracking.
Consider first a single crossing between $\theta$ and a large hexagonal face~$F$ of $\triang_{00}$
satisfying $\sigma^*(F)=1$.
As shown on the right panel of Figure~\ref{fig:fanning}, the contribution of this crossing
to the parity of $s$ along $\theta$ will come from a pair of
rectangles adjacent to $F$, one in each of the tetrahedra separated by $F$.
Since $F$ contributes $1$ to the value of $Y(\sigma^*)$ on both
rectangles, the total contribution of the two rectangles to the parity along $\theta$
is always $0\in\FF_2$.
In general, the parity of $s$ along $\theta$ is a sum of such contributions, so it always vanishes,
and part~(i) is established.

In order to prove part~(ii), suppose first that $s\in\mathcal{S}$ is such that $\rect(s)$
represents the trivial class in $H^2(M,\partial M;\FF_2)$.
By Lemma~\ref{defanning}, there exists a relative cocycle
$\sigma\in Z^2(\triang_{00}, \partial\triang_{00}; \FF_2)$
supported on the large hexagonal faces only and such that $Y(\sigma)=\rect(s)$.
Hence, $\sigma = h_0^*(\Sigma)$, where the value of the relative
cocycle~$\Sigma\in Z^2(\triang_{0}, \partial\triang_{0}; \FF_2)$
on each interior face in $\triang_0$ coincides with
the value of $\sigma$ on the corresponding large hexagonal face in $\triang_{00}$.

For an edge~$E$ of $\triang$, let $E_0$ be the
corresponding \nword{1}{cell} in $\triang_0$.
Denote by $\chi_E\in C^1(\triang_{0}, \partial\triang_{0}; \FF_2)$ the \nword{1}{cochain}
taking the value $1\in\FF_2$ on $E_0$ and vanishing everywhere else.
Since the cochains $\left\{\chi_E\right\}_{E\in\edges(\triang)}$
span the space $C^1(\triang_{0}, \partial\triang_{0}; \FF_2)$
and since $\Sigma$ is a coboundary,
it follows that there exists a subset $\mathcal{U}\subset\edges(\triang)$
such that
\(\displaystyle
    \Sigma = \sum_{E\in\mathcal{U}} \delta^1(\chi_E)
\),
which implies that
\begin{equation}\label{edge-decomp-of-coboundary}
    \rect(s) = Y\bigl(h_0^*(\Sigma)\bigr)
    = \sum_{E\in\mathcal{U}} Y\bigl(h_0^*\bigl(\delta^1(\chi_E)\bigr)\bigr),
\end{equation}
where the second equality uses Remark~\ref{linearity-of-Y}.
\begin{figure}
    \centering
    \begin{tikzpicture}
        \node[anchor=south west,inner sep=0] (image) at (0,0)
            {\includegraphics[width=0.95\columnwidth]{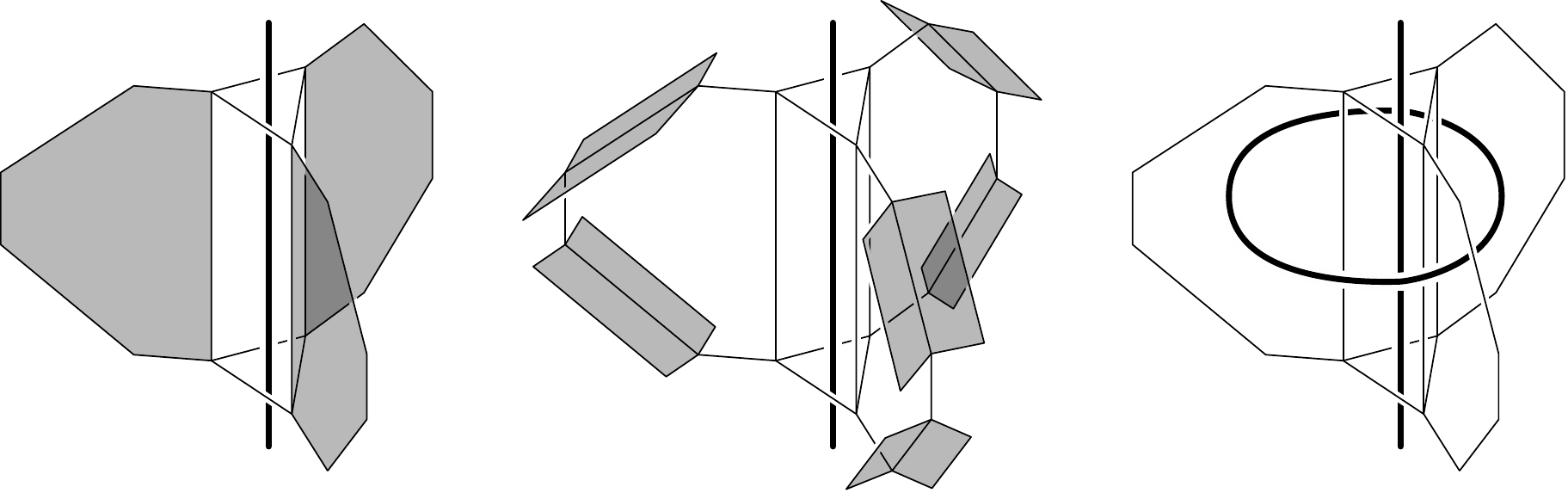}};
        \begin{scope}[x={(image.south east)},y={(image.north west)}]
            \node[anchor=east] at (0.17, 0.95) {$E$};
            \node[anchor=east] at (0.53, 0.95) {$E$};
            \node[anchor=east] at (0.891, 0.95) {$E$};
            \node[anchor=north east] at (0.8, 0.5) {$\varepsilon$};
        \end{scope}
    \end{tikzpicture}
    \caption[A figure with 3 panels.]{\textsc{Left:}
    The cochain $h_0^*\bigl(\delta^1(\chi_E)\bigr)$ receives a contribution of $1\in\FF_2$ to each of
    the large hexagonal faces of $\triang_{00}$ contained in the faces of $\triang$
    incident to the edge~$E$.
    \textsc{Centre:}
    Applying the fanning construction, we see that $Y\bigl(h_0^*\bigl(\delta^1(\chi_E)\bigr)\bigr)$
    coincides with $\rect([l_E]_2)$, where $l_E$ is the leading-trailing deformation about the edge~$E$.
    \textsc{Right:}
    A closed curve~$\varepsilon$ fully contained in the tetrahedra incident to the the edge~$E$
    and encircling $E$ once.
    }\label{fig:coboundary-ltd}
\end{figure}%
We shall now consider a single edge $E\in\mathcal{U}$.
For each face~$F$ of $\triang$,
the value of the cochain $h_0^*\bigl(\delta^1(\chi_E)\bigr)$
on the large hexagonal cell inside $F$ is equal to the (mod~$2$) number of edges of $F$
in the edge class of $E$.
This is illustrated in Figure~\ref{fig:coboundary-ltd}.
Comparing the left and middle panels of the figure makes it
clear that $Y\bigl(h_0^*\bigl(\delta^1(\chi_E)\bigr)\bigr) = \rect([l_E]_2)$,
where $[l_E]_2$ is the mod~$2$ reduction of the leading-trailing deformation
about the edge~$E$.
In other words, \eqref{edge-decomp-of-coboundary} implies that
$s\in\linspan_{\FF_2}\{[l_E]_2: E\in\edges(\triang)\}=\Imag\mathbf{L}_2^\transp$.

In order to finish the proof of part~(ii), suppose that $s\in\Imag\mathbf{L}_2^\transp$.
Hence, $s$ is a linear combination of vectors of the form~$[l_E]_2$,
where $E$ is an edge of $\triang$.
Corollary~\ref{rect-edge-ltds-are-coboundaries} now implies
that $\rect(s)$ is a coboundary.
\end{proof}

\subsection{Definition of the obstruction maps}
\label{obstruction-map-defn}
In this section, we are going to define the maps $\Phi$ and $\Phi_0$ occurring in the statement
of Theorem~\ref{obstruction-theorem}.
As in the preceding section, we are representing the pair~$(M, \partial M)$ by the CW-pair
$(\triang_{00}, \partial\triang_{00})$.

\begin{definition}\label{def-Phi}
We define
\begin{equation*}
    \mathcal{R}_0: \AFpar(\triang) \to H^2(M, \partial M;\FF_2),\qquad
    \mathcal{R}_0(\alpha) = [\rect(\alpha)].
\end{equation*}
Likewise, using the inclusion
$Z^2(\triang_{00}, \partial\triang_{00}; \FF_2)\subset Z^2(\triang_{00}; \FF_2)$,
we set
\begin{equation*}
    \mathcal{R}:\AFpar(\triang) \to H^2(M ;\FF_2),\qquad
    \mathcal{R}(\alpha) = [\rect(\alpha)].
\end{equation*}
Moreover, we define
$\Phi:\SAS(\triang)\to H^2(M;\FF_2)$ and
$\Phi_0:\SAS_0(\triang)\to H^2(M,\partial M;\FF_2)$
as the unique continuous extensions of the maps defined by
$\Phi\bigl((-1)^\alpha\bigr)=\mathcal{R}(\alpha)$
and $\Phi_0\bigl((-1)^\alpha\bigr)=\mathcal{R}_0(\alpha)$
for $\alpha$ in $\AFpar(\triang)$.
\end{definition}

Note that the above definition implies that $\mathcal{R}=\Xi\circ\mathcal{R}_0$, where
\[
    \Xi: H^2(M,\partial M; \FF_2) \to H^2(M;\FF_2)
\]
is the natural map occurring in the long exact sequence of the pair~$(M,\partial M)$
in \nword{\FF_2}{cohomology}.
Similarly, the equality $\Phi = \Xi\circ\Phi_0$ holds
on the set of \Sval\ angle structures of the form~$(-1)^\alpha$
for $\alpha\in\AFpar(\triang)$.

\begin{proposition}\label{Phis-welldef}
The maps $\Phi$ and $\Phi_0$ are well-defined. In other words:
\begin{enumerate}[(i)]
\item\label{item:special-points-are-abundant}
    Every connected component of $\SAS_0(\triang)$, and thus
    every connected component of $\SAS(\triang)$, contains a $\{\pm 1\}$-valued angle structure
    of the form $\omega=(-1)^\alpha$ for some $\alpha\in\AFpar(\triang)$.
\item\label{item:Phis-welldef-on-components}
    If $\alpha_1,\alpha_2\in\AFpar(\triang)$ are such that
    $\omega_1 = (-1)^{\alpha_1}$ and $\omega_2 = (-1)^{\alpha_2}$
    lie in the same connected component of
    $\SAS_0(\triang)$, then $\mathcal{R}_0(\alpha_1)=\mathcal{R}_0(\alpha_2)$.
    Likewise, if $\omega_1$ and $\omega_2$ lie in the same connected component
    of $\SAS(\triang)$, then $\mathcal{R}(\alpha_1)=\mathcal{R}(\alpha_2)$.
\end{enumerate}
\end{proposition}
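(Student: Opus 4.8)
The plan is to establish the two listed properties; together they are precisely the statement that $\Phi_0$ and $\Phi$ are well defined. Throughout I treat $\SAS_0(\triang)$ first and deduce the $\SAS(\triang)$ assertions from it.

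\textbf{Part \eqref{item:special-points-are-abundant}.} Given $\omega\in\SAS_0(\triang)$, choose a real pseudo-angle lift $\alpha\colon Q(\triang)\to\RR$ with $e^{i\alpha(\square)}=\omega(\square)$, and \emph{normalise} it: since in each tetrahedron $\alpha(\square)+\alpha(\square')+\alpha(\square'')\in\pi+2\pi\ZZ$, after subtracting a suitable vector of $2\pi\ZZ^{Q(\triang)}$ (which leaves $\omega$ unchanged) I may assume every tetrahedral sum is exactly $\pi$. Put $\beta=\alpha/\pi$, so all tetrahedral sums of $\beta$ equal $1$, and set $c=\mathbf{G}\beta$, $d=\mathbf{G}_\partial\beta$; since $\omega$ has trivial edge products and trivial peripheral holonomy, $c\in2\ZZ^{\edges(\triang)}$ and $d\in2\ZZ^{2k}$. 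The hypothesis $p(c-\mathbf{2})=0$ of Theorem~\ref{generalization-of-6.1} holds automatically: the vertex-compatibility identity $p(\mathbf{G}v)(V)=\sum_\tet d_\tet(V)\,\sigma_\tet(v)$, where $\sigma_\tet(v)$ is the tetrahedral sum of $v$ in $\tet$ and $d_\tet(V)$ is the number of ideal vertices of $\tet$ lying over $V$, shows that $p(\mathbf{G}v)$ depends on $v$ only through its tetrahedral sums, which for $\beta$ agree with those of the vector $f$ of Lemma~\ref{neumann-6.1} (for which $\mathbf{G}f=\mathbf{2}$). Applying Theorem~\ref{generalization-of-6.1} with this $c$ and $d$ produces $\eta\in\ZZ^{Q(\triang)}$ with tetrahedral sums $1$, $\mathbf{G}\eta=c$, $\mathbf{G}_\partial\eta=d$, and $\parity_\theta(\eta)=0$ for every closed curve $\theta$; then $[\eta]_2\in\AFpar(\triang)$, so $(-1)^\eta\in\SAS_0(\triang)$. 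Finally $\eta-\beta$ has vanishing tetrahedral sums and vanishing $\mathbf{G}$- and $\mathbf{G}_\partial$-images, hence lies in $\TAS_0(\triang)=\Imag\mathbf{L}^\transp$, so by Remark~\ref{TAS-and-connected-components} $\omega=e^{i\pi\beta}$ and $(-1)^\eta=e^{i\pi\eta}$ lie in the same connected component of $\SAS_0(\triang)$. For $\SAS(\triang)$ it remains to note that every component $C$ of $\SAS(\triang)$ meets $\SAS_0(\triang)$: moving within $C$ by the leading--trailing deformations $l_{\mu_j},l_{\lambda_j}$ changes the peripheral holonomies over all of $(S^1)^{2k}$ (Futer--Gu\'eritaud~\cite[Lemma~4.4]{futer-gueritaud}; cf.~\eqref{eq:eta-that-works-for-the-lemma}), so $C$ contains a peripherally trivial structure, which by the previous case shares a component with some $(-1)^\alpha$, $\alpha\in\AFpar(\triang)$.

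\textbf{Part \eqref{item:Phis-welldef-on-components}.} Let $\omega_i=(-1)^{\alpha_i}$ ($i=1,2$), with $\alpha_i\in\AFpar(\triang)$, lie in the same component of $\SAS_0(\triang)$. By Remark~\ref{TAS-and-connected-components} there is $r\in\Imag\mathbf{L}^\transp$ with $(-1)^{\alpha_2(\square)}=(-1)^{\alpha_1(\square)}e^{ir(\square)}$; comparing values forces $e^{ir(\square)}\in\{\pm1\}$, so $\bar r:=r/\pi$ is an integer vector with $[\bar r]_2=\alpha_1-\alpha_2$. Scaling keeps us in the subspace, so $\bar r\in\TAS_0(\triang)$, and since $[\bar r]_2=\alpha_1-\alpha_2$ is a difference of elements of $\AFpar(\triang)$ it lies in the space $\mathcal{S}$ of~\eqref{def-of-subspace-S}; thus $\bar r$ has vanishing tetrahedral sums, $\mathbf{G}\bar r=0$, $\mathbf{G}_\partial\bar r=0$, and $\parity_\theta(\bar r)=0$ for all $\theta$. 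Hence $\bar r$ — and also the zero vector — satisfies conditions 1--4 of Lemma~\ref{homogenous-generalization-of-neumanns6.1} with $c=d=0$, so the uniqueness clause of that lemma gives $\bar r\in\mathbf{L}^\transp(\ZZ^{\edges(\triang)})$, and therefore $\alpha_1-\alpha_2=[\bar r]_2\in\Imag\mathbf{L}_2^\transp$. By Theorem~\ref{homogenous-preimage}\eqref{item:KerLTD}, $\rect(\alpha_1-\alpha_2)$ is a relative coboundary, and since $\rect$ is $\FF_2$-linear on $\Ker\mathbf{G}_2\supseteq\mathcal{S}$ we get $[\rect(\alpha_1)]=[\rect(\alpha_1-\alpha_2)]+[\rect(\alpha_2)]=[\rect(\alpha_2)]$, i.e.\ $\mathcal{R}_0(\alpha_1)=\mathcal{R}_0(\alpha_2)$. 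The $\SAS(\triang)$ case runs identically with $r\in\Imag\mathbf{L}^\transp+\Imag\mathbf{L}_\partial^\transp$: then $\bar r\in\TAS(\triang)$ satisfies conditions 1--4 of Lemma~\ref{homogenous-generalization-of-neumanns6.1} with $c=0$, $d=\mathbf{G}_\partial\bar r$, and tracing the construction in~\eqref{eq:eta-that-works-for-the-lemma} shows that $\bar r$, hence $\alpha_1-\alpha_2$ modulo~$2$, is an integral combination of leading--trailing deformations along the edges of $\triang$ and along the $\mu_j,\lambda_j$; applying $\rect$, the edge terms are coboundaries by Corollary~\ref{rect-edge-ltds-are-coboundaries}, while by Lemma~\ref{rectal-lemma} the $\mu_j,\lambda_j$ terms represent Poincar\'e duals of peripheral curves, which vanish in $H^2(M;\FF_2)$ because peripheral homology classes die in $H_1(M,\partial M;\FF_2)$. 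Hence $\mathcal{R}(\alpha_1)=\mathcal{R}(\alpha_2)$.

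\textbf{Main obstacle.} The delicate point — and the place where the parity condition built into $\AFpar(\triang)$ is genuinely needed — is the passage from real to integral data in Part~\eqref{item:Phis-welldef-on-components}: two $\{\pm1\}$-valued structures in the same component differ only by a vector in the \emph{real} span $\Imag\mathbf{L}^\transp$, whereas Theorem~\ref{homogenous-preimage} requires membership in the \emph{integral} image $\Imag\mathbf{L}_2^\transp$. This gap is bridged exactly by the uniqueness clause of Lemma~\ref{homogenous-generalization-of-neumanns6.1}, which applies only because the difference inherits the parity condition from $\mathcal{S}$; an integer point of $\TAS_0(\triang)$ that failed the parity condition need not lie in $\mathbf{L}^\transp(\ZZ^{\edges(\triang)})$, and the argument would collapse without it. A secondary technical point is the normalisation of the pseudo-angle lift in Part~\eqref{item:special-points-are-abundant}: it is what simultaneously makes the hypothesis $p(c-\mathbf{2})=0$ of Theorem~\ref{generalization-of-6.1} hold and makes $\eta-\beta$ land exactly in $\TAS_0(\triang)$ rather than in a nonzero coset of it.
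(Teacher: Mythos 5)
Your proof is correct and reaches the same end points as the paper's argument, passing through the same key technical results (Theorem~\ref{generalization-of-6.1}, the uniqueness clause in Lemma~\ref{homogenous-generalization-of-neumanns6.1}, Theorem~\ref{homogenous-preimage}, Lemma~\ref{rectal-lemma}, Corollary~\ref{rect-edge-ltds-are-coboundaries}, and Remark~\ref{TAS-and-connected-components}), but the route differs at a few points worth noting. In Part~\eqref{item:special-points-are-abundant} the paper obtains the hypothesis $p(c-\mathbf{2})=0$ by citing Neumann's Theorem~4.1 directly; you instead derive it from the asserted ``vertex-compatibility identity'' $p(\mathbf{G}v)(V)=\sum_{\tet}d_{\tet}(V)\,\sigma_{\tet}(v)$. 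That identity is true (for each quad $\square\subset\tet$ one has $\sum_E\inc(E,V)\,G(E,\square)=d_{\tet}(V)$, summing over the two opposite tetrahedral edges facing $\square$), and it makes the step self-contained, but you state it without proof, so your argument is not yet closed there. You also make explicit the step the paper glosses over with ``and thus'': that every component of $\SAS(\triang)$ meets $\SAS_0(\triang)$, which you justify correctly via Futer--Gu\'eritaud's Lemma~4.4. In Part~\eqref{item:Phis-welldef-on-components} the paper lifts $\alpha_1,\alpha_2$ to integer vectors $\eta_1,\eta_2$ satisfying all of Theorem~\ref{generalization-of-6.1} and invokes its uniqueness clause; you instead extract the difference vector $r$ from Remark~\ref{TAS-and-connected-components}, scale to $\bar r=r/\pi$, check that $\bar r$ is integral and lies in $\mathcal{S}\cap\TAS_0(\triang)$, and apply the uniqueness clause of Lemma~\ref{homogenous-generalization-of-neumanns6.1} to $\bar r$ and $0$. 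These are two ways of applying the same uniqueness statement; yours is a touch more direct. For the $\SAS(\triang)$ case your Lefschetz-duality shortcut (peripheral homology classes die in $H_1(M,\partial M;\FF_2)$) is equivalent to the paper's diagram chase through $\Imag\Delta^1=\Ker\Xi$ in~\eqref{Xi-diagram}. One small point of care you handled correctly but should keep visible: in the $\SAS(\triang)$ branch you need $d=\mathbf{G}_\partial\bar r$ to be even before applying Lemma~\ref{homogenous-generalization-of-neumanns6.1}; this comes from the even parity of $\bar r$ along the peripheral curves, which you inherit from $\alpha_1-\alpha_2\in\mathcal{S}$.
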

\begin{proof}
In order to prove part~\eqref{item:special-points-are-abundant},
it will be helpful to consider the defining equations for the circle-valued
angle space~$\SAS_0(\triang)$ as equations in real variables modulo $2\pi$,
via the identification $\RR/2\pi\ZZ \cong S^1$ established by the exponential
map~$\alpha\mapsto\exp(i\alpha)$.
Let $C$ be a connected component of $\SAS_0(\triang)$.
Lifting the defining equations of $C$, we see that
there exists an even vector $c\in2\ZZ^{\edges(\triang)}$ and
an even vector $d\in2\ZZ^{2k}$ such that $C$ is the image under the
exponential map of the solution set in $\RR^{Q(\triang)}$ of the equations
\begin{equation}\label{generalized-angle-equations}
\left\{
    \begin{aligned}
        \alpha(\square) + \alpha(\square') + \alpha(\square'') &= \pi\ \text{for all}\ \square,\\
        \mathbf{G}\alpha &= \pi c,\\
        \mathbf{G}_\partial \alpha &= \pi d.
    \end{aligned}\right.
\end{equation}
Moreover, by \cite[Theorem~4.1]{neumann1990}, the vector $c$ satisfies $p(c-\mathbf{2})=0$.
By Theorem~\ref{generalization-of-6.1}, there exists an integer vector~$\eta\in\ZZ^{Q(\triang)}$
such that $\pi\eta$ satisfies equations~\eqref{generalized-angle-equations}
and additionally $\eta$ has even parity along all curves in normal position and without
backtracking.
Therefore, $[\eta]_2\in\AFpar(\triang)$ and
the \Sval\ angle structure given by
$\omega(\square) := (-1)^{\eta(\square)}$
is an element of the component~$C$, which establishes part~\eqref{item:special-points-are-abundant}.

In order to prove part~\eqref{item:Phis-welldef-on-components}, we first consider
two elements $\alpha_1, \alpha_2\in \AFpar(\triang)$
such that $\omega_1 = (-1)^{\alpha_1}$ and $\omega_2=(-1)^{\alpha_2}$
lie in the same connected component of $\SAS_0(\triang)$.
As before, there exist two integer vectors $\eta_1, \eta_2\in\ZZ^{Q(\triang)}$ satisfying
equations 1--4 of Theorem~\ref{generalization-of-6.1} with the same $c$ and $d$
and such that $\alpha_j = [\eta_j]_2$ ($j=1,2$).
Hence, $\eta_1-\eta_2\in\mathbf{L}^\transp(\ZZ^{\edges(\triang)})\subset\TAS_0(\triang)$
and consequently $\alpha_1-\alpha_2\in\Imag\mathbf{L}_2^\transp$.
Applying part~\eqref{item:KerLTD} of Theorem~\ref{homogenous-preimage},
we conclude that $\rect(\alpha_1-\alpha_2)$ is zero in $H^2(M,\partial M;\FF_2)$,
so that $\mathcal{R}_0(\alpha_1)=\mathcal{R}_0(\alpha_2)$.

To finish the proof, we must also consider the case of two
elements $\alpha_1, \alpha_2\in\AFpar(\triang)$
for which $\omega_1 = (-1)^{\alpha_1}$ and $\omega_2=(-1)^{\alpha_2}$
lie in the same connected component of $\SAS(\triang)$, but not necessarily
in the same connected component of $\SAS_0(\triang)$.
In this case, there exist two integer vectors $\eta_1, \eta_2\in\ZZ^{Q(\triang)}$
satisfying equations 1, 2, and 4 of Theorem~\ref{generalization-of-6.1} with the
same $c$ and such that $\alpha_j = [\eta_j]_2$ ($j=1,2$).
Although it may now happen that $\mathbf{G}_\partial \eta_1 \neq \mathbf{G}_\partial \eta_2$,
we still have $\eta_1-\eta_2\in\Imag_\ZZ\mathbf{L}^\transp+\Imag_\ZZ\mathbf{L}_\partial^\transp$.
Recall that $\Imag_\ZZ\mathbf{L}_\partial^\transp$ is spanned over $\ZZ$ by
leading-trailing deformations along peripheral curves.
Thus, using the first part of the proof, we see that the statement will follow if
we show that for any peripheral curve~$\theta$, $\rect([l_\theta]_2)$ represents
the trivial class in $H^2(M;\FF_2)$.
To this end, consider the following commutative diagram
\begin{equation}\label{Xi-diagram}
\begin{tikzcd}
& \Ker\mathbf{G}_2 \arrow[swap]{d}{\rect}\arrow{dr}{\rect}\\
& Z^2(\triang_{00},\partial\triang_{00};\FF_2) \arrow{d}{}\arrow{r}{\subset} & Z^2(\triang_{00};\FF_2)\arrow{d}{}\\
H^1(\partial M;\FF_2)\arrow{r}{\Delta^1}
& H^2(M,\partial M; \FF_2)\arrow{r}{\Xi} & H^2(M;\FF_2) \arrow{r}{\iota^*}
& H^2(\partial M; \FF_2)
\end{tikzcd}
\end{equation}
in which the bottom row is a part of the long exact sequence of the pair
$(M,\partial M)$ in \nword{\FF_2}{cohomology}.
Denote by $\PD_1: H_1(\partial M;\FF_2)\to H^1(\partial M;\FF_2)$ the
Poincar\'e duality isomorphism.
By Lemma~\ref{rectal-lemma},
\[
    [\rect([l_\theta]_2)] = \Delta^1\bigl(\PD_1([\theta])\bigr) \in \Imag\Delta^1=\Ker\Xi,
\]
so that $\rect([l_\theta]_2)$ represents the trivial class in $H^2(M;\FF_2)$.
\end{proof}

\subsection{Proof of Theorem~\ref{obstruction-theorem}}
\label{proof-of-obstruction-theorem}
As shown in Proposition~\ref{Phis-welldef}, the maps $\Phi$ and $\Phi_0$ are well defined
and automatically satisfy part~\eqref{item:continuity} of Theorem~\ref{obstruction-theorem}.
In this section, we are going to focus on the much less obvious parts~\eqref{obstructions-bijectivity}
and~\eqref{item:correct-obstructions}
which follow from the three lemmas stated below.

\begin{lemma}\label{surjectivity-lemma}
    \begin{enumerate}[(i)]
        \item The map $\Phi_0$ is surjective.
        \item The image of $\Phi$ coincides with the kernel of the natural
        map
        \(
            \iota^* : H^2(M;\FF_2)\to H^2(\partial M;\FF_2)
        \).
    \end{enumerate}
\end{lemma}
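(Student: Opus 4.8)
The plan is to prove each part of Lemma~\ref{surjectivity-lemma} by combining the structural results already established about $\rect$ with Neumann's exact sequences and the commutative diagram~\eqref{Xi-diagram}.

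\textbf{Part (i): surjectivity of $\Phi_0$.} First I would note that by Proposition~\ref{Phis-welldef}\eqref{item:special-points-are-abundant}, the image of $\Phi_0$ equals the image of $\mathcal{R}_0:\AFpar(\triang)\to H^2(M,\partial M;\FF_2)$, since every component of $\SAS_0(\triang)$ contains a point of the form $(-1)^\alpha$ with $\alpha\in\AFpar(\triang)$. So it suffices to show $\mathcal{R}_0$ is surjective. Observe that $\AFpar(\triang)$ is a coset of the subspace $\mathcal{S}\subset\FF_2^{Q(\triang)}$ defined by~\eqref{def-of-subspace-S}: indeed, if $\eta_0\in\AFpar(\triang)$ is a fixed element (which exists by Theorem~\ref{generalization-of-6.1} with $c=\mathbf{2}$, $d=0$, or by Lemma~\ref{neumann-6.1}), then $\alpha\in\AFpar(\triang)$ iff $\alpha-\eta_0\in\mathcal{S}$. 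Since $\rect$ is $\FF_2$-linear, $\mathcal{R}_0(\alpha) = [\rect(\eta_0)] + [\rect(\alpha-\eta_0)]$, so the image of $\mathcal{R}_0$ is the coset $[\rect(\eta_0)] + \{[\rect(s)] : s\in\mathcal{S}\}$. But by Theorem~\ref{homogenous-preimage}\eqref{item:rectonto}, the set $\{[\rect(s)] : s\in\mathcal{S}\}$ is \emph{all} of $H^2(M,\partial M;\FF_2)$. Hence the image of $\mathcal{R}_0$ is all of $H^2(M,\partial M;\FF_2)$, and $\Phi_0$ is surjective.

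\textbf{Part (ii): image of $\Phi$ equals $\Ker\iota^*$.} Again by Proposition~\ref{Phis-welldef}\eqref{item:special-points-are-abundant}, the image of $\Phi$ equals the image of $\mathcal{R}=\Xi\circ\mathcal{R}_0$. Since $\mathcal{R}_0$ is onto by Part~(i), the image of $\mathcal{R}$ is $\Xi\bigl(H^2(M,\partial M;\FF_2)\bigr)=\Imag\Xi$. By exactness of the long exact sequence of the pair $(M,\partial M)$ appearing in the bottom row of~\eqref{Xi-diagram}, $\Imag\Xi = \Ker\iota^*$. This gives the desired equality. One should double-check that the use of $\mathcal{R}=\Xi\circ\mathcal{R}_0$ is legitimate: Definition~\ref{def-Phi} establishes this equality on the set of $(-1)^\alpha$ with $\alpha\in\AFpar(\triang)$, which is exactly where we need it.

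The main obstacle here is not any single step but making sure the reduction to $\AFpar(\triang)$ is airtight—i.e., that the image of $\Phi_0$ (respectively $\Phi$) really does coincide with the image of $\mathcal{R}_0$ (respectively $\mathcal{R}$) and not merely contain or be contained in it. This follows because $\Phi_0$ is constant on connected components (part~\eqref{item:continuity} of Theorem~\ref{obstruction-theorem}, proved in Proposition~\ref{Phis-welldef}) and every component meets the locus of $\{\pm1\}$-valued structures of the required form, so no component contributes a value outside the image of $\mathcal{R}_0$. Once that is in place, everything reduces to Theorem~\ref{homogenous-preimage}\eqref{item:rectonto} and the exactness of the cohomology sequence of the pair, both already available.
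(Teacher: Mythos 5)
Your proposal is correct and follows essentially the same route as the paper's own proof: reduce to $\mathcal{R}_0$ using Proposition~\ref{Phis-welldef}\eqref{item:special-points-are-abundant}, observe that $\AFpar(\triang)$ is an affine space over $\mathcal{S}$, invoke Theorem~\ref{homogenous-preimage}\eqref{item:rectonto} and the $\FF_2$-linearity of $\rect$ for part (i), then use $\Phi=\Xi\circ\Phi_0$ on $\epsilon\bigl(\AFpar(\triang)\bigr)$ together with exactness of the long exact sequence of the pair for part (ii). The only difference is presentational (you phrase part (i) as a coset argument, the paper writes it constructively by building a preimage), so there is nothing substantive to flag.
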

\begin{lemma}\label{injectivity-lemma}
The maps $\pi_0(\Phi)$ and $\pi_0(\Phi_0)$ are injective.
\end{lemma}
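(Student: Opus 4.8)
The plan is to deduce injectivity of $\pi_0(\Phi)$ and $\pi_0(\Phi_0)$ from the structural results already in hand, namely Theorem~\ref{homogenous-preimage}\eqref{item:KerLTD}, the characterisation of connected components of $\SAS_0(\triang)$ and $\SAS(\triang)$ via leading--trailing deformations in Remark~\ref{TAS-and-connected-components}, and the fact (Proposition~\ref{Phis-welldef}\eqref{item:special-points-are-abundant}) that every component contains a distinguished $\{\pm 1\}$-valued representative $(-1)^\alpha$ with $\alpha\in\AFpar(\triang)$. The essential point is that the obstruction maps are \emph{linear} in a suitable sense on these $\FF_2$-valued representatives, so injectivity on $\pi_0$ reduces to an injectivity statement for a linear map between $\FF_2$-vector spaces.

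First I would handle $\pi_0(\Phi_0)$. Given two components of $\SAS_0(\triang)$ with equal image under $\pi_0(\Phi_0)$, pick $\{\pm 1\}$-valued representatives $\omega_1 = (-1)^{\alpha_1}$, $\omega_2 = (-1)^{\alpha_2}$ with $\alpha_1,\alpha_2\in\AFpar(\triang)$, available by Proposition~\ref{Phis-welldef}\eqref{item:special-points-are-abundant}. Then $\mathcal{R}_0(\alpha_1) = \mathcal{R}_0(\alpha_2)$, i.e.\ $[\rect(\alpha_1-\alpha_2)] = 0$ in $H^2(M,\partial M;\FF_2)$; note $\alpha_1 - \alpha_2$ lies in the space $\mathcal{S}$ of \eqref{def-of-subspace-S}, being a difference of elements of $\AFpar(\triang)$, so Theorem~\ref{homogenous-preimage}\eqref{item:KerLTD} applies and gives $\alpha_1 - \alpha_2 \in \Imag\mathbf{L}_2^\transp$. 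Lifting, one can write $\alpha_1 - \alpha_2 = [\mathbf{L}^\transp v]_2$ for some $v\in\ZZ^{\edges(\triang)}$; then $r := \pi\,\mathbf{L}^\transp v \in \Imag\mathbf{L}^\transp\subset\TAS_0(\triang)$ satisfies $\omega_2(\square) = \omega_1(\square)e^{ir(\square)}$, so by Remark~\ref{TAS-and-connected-components} the two structures lie in the same component of $\SAS_0(\triang)$. Hence the two components coincide, which is the desired injectivity.

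For $\pi_0(\Phi)$ the argument is the same modulo replacing the relative story by the absolute one. Suppose $\omega_1 = (-1)^{\alpha_1}$, $\omega_2 = (-1)^{\alpha_2}$ represent components of $\SAS(\triang)$ with $\mathcal{R}(\alpha_1) = \mathcal{R}(\alpha_2)$ in $H^2(M;\FF_2)$. Since $\mathcal{R} = \Xi\circ\mathcal{R}_0$ on such points, this says $\Xi(\mathcal{R}_0(\alpha_1 - \alpha_2)) = 0$, i.e.\ $\mathcal{R}_0(\alpha_1 - \alpha_2)\in\Ker\Xi = \Imag\Delta^1$. By Lemma~\ref{rectal-lemma} the classes $[\rect([l_\theta]_2)]$ for peripheral $\theta$ already generate $\Imag\Delta^1$ (they are the images under $\Delta^1$ of the Poincar\'e duals of peripheral homology classes, which span $H^1(\partial M;\FF_2)$), so there is a combination $\lambda$ of peripheral leading--trailing deformations with $\mathcal{R}_0(\alpha_1 - \alpha_2) = \mathcal{R}_0([\lambda]_2)$, whence $\alpha_1 - \alpha_2 - [\lambda]_2$ is killed by $\mathcal{R}_0$ and lies in $\mathcal{S}$, so by Theorem~\ref{homogenous-preimage}\eqref{item:KerLTD} it lies in $\Imag\mathbf{L}_2^\transp$. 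Therefore $\alpha_1 - \alpha_2 \in \Imag\mathbf{L}_2^\transp + \Imag(\mathbf{L}_\partial)_2^\transp$, and lifting and multiplying by $\pi$ produces $r\in\Imag\mathbf{L}^\transp + \Imag\mathbf{L}_\partial^\transp$ with $\omega_2 = \omega_1 e^{ir}$; Remark~\ref{TAS-and-connected-components} then puts $\omega_1,\omega_2$ in the same component of $\SAS(\triang)$.

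The main obstacle is the bookkeeping in the second case: one must be careful that $\Imag\Delta^1$ is \emph{exactly} spanned by the classes $[\rect([l_\theta]_2)]$ over peripheral curves $\theta$ — this uses Lemma~\ref{rectal-lemma} together with surjectivity of $\PD_1$ and the fact that the image of $H_1(\partial M;\FF_2)$ generates $H^1(\partial M;\FF_2)$ under Poincar\'e duality on the closed surface $\partial M$ — and that lifting an $\FF_2$-relation of the form $\alpha_1 - \alpha_2\in\Imag\mathbf{L}_2^\transp + \Imag(\mathbf{L}_\partial)_2^\transp$ to an integral relation only requires adding an element of $2\ZZ^{Q(\triang)}$, which has no effect after multiplication by $\pi$ and exponentiation since $e^{2\pi i \ZZ} = 1$. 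Once these points are checked, both injectivity statements follow formally, and combining Lemma~\ref{surjectivity-lemma} with Lemma~\ref{injectivity-lemma} yields part~\eqref{obstructions-bijectivity} of Theorem~\ref{obstruction-theorem}.
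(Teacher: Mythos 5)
Your argument is correct and follows essentially the same route as the paper's own proof: reduce to $\{\pm 1\}$-valued representatives via Proposition~\ref{Phis-welldef}\eqref{item:special-points-are-abundant}, note the difference lies in $\mathcal{S}$, apply Theorem~\ref{homogenous-preimage}\eqref{item:KerLTD} (after peeling off a peripheral correction in $\Imag\Delta^1$ via Lemma~\ref{rectal-lemma} and diagram~\eqref{Xi-diagram} in the absolute case), lift the resulting $\FF_2$-relation to $\ZZ$, and conclude with Remark~\ref{TAS-and-connected-components}. The bookkeeping concerns you flag — that $\{[\rect([l_\theta]_2)]\}$ exactly span $\Imag\Delta^1$, and that any integral lift of the $\FF_2$-relation works since $e^{2\pi i\ZZ}=1$ — are the right ones and your resolutions match what the paper does.
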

\begin{lemma}\label{obstruction-lemma}
Let $z$ be an algebraic solution of Thurston's edge consistency  equations
defining a conjugacy class of a representation $\rho:\pi_1(M)\to\PSLC$.
Then the associated circle-valued angle
structure~$\omega(\square)=z(\square)/|z(\square)|$ satisfies the following properties:
\begin{enumerate}[(i)]
    \item $\Phi(\omega)\in H^2(M;\FF_2)$ is the cohomological obstruction to
    lifting $\rho$ to an $\SLC$ representation of $\pi_1(M)$.
    \item If $\omega$ has trivial peripheral angle-holonomy, i.e., $\rho$
    is boundary-parabolic, then $\Phi_0(\omega)\in H^2(M,\partial M;\FF_2)$
    is the cohomological obstruction to lifting $\rho$
    to a boundary-unipotent $\SLC$ representation.
\end{enumerate}
\end{lemma}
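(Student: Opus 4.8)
The plan is to compute the obstruction cocycle of $\rho$ directly on the doubly-truncated complex $(\triang_{00},\partial\triang_{00})$ out of the shape parameters of $z$ and a choice of logarithms, and to recognise the result as $\rect([\eta]_2)$ for a suitable strong flattening $\eta$. Combined with Definition~\ref{def-Phi} and the description of connected components in Remark~\ref{TAS-and-connected-components}, this identifies the obstruction class with $\Phi(\omega)$, and, in the boundary-parabolic case, with $\Phi_0(\omega)$.

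\textbf{Choosing a strong flattening.} Fix logarithms $Z=(\log z_n)$, $Z'=(\log z_n')$, $Z''=(\log z_n'')$ normalised so that $\log z_n+\log z_n'+\log z_n''=\pi i$ in each tetrahedron (possible since $z_nz_n'z_n''=-1$); then $\tilde\alpha:=\Imag Z$ satisfies $e^{i\tilde\alpha}=\omega$, it has tetrahedral sums equal to $\pi$, and by \eqref{edge-consistency-general} it has $\mathbf{G}\tilde\alpha=2\pi\kappa$, where $GZ+G'Z'+G''Z''=2\pi i\,\kappa$ for an integer vector $\kappa$. By \cite[Theorem~4.1]{neumann1990} (as used in the proof of Proposition~\ref{Phis-welldef}), the vector $c:=2\kappa$ satisfies $p(c-\mathbf 2)=0$, so Theorem~\ref{generalization-of-6.1} furnishes an integer vector $\eta$ with $\eta(\square)+\eta(\square')+\eta(\square'')=1$, $\mathbf{G}\eta=c$, and vanishing parity along every curve; thus $[\eta]_2\in\AFpar(\triang)$. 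In the boundary-parabolic case, where \eqref{completeness-equations-mod-2pi} forces $\mathbf{G}_\partial\tilde\alpha\in2\pi\ZZ^{2k}$, we take in addition $\mathbf{G}_\partial\eta=\tfrac1\pi\mathbf{G}_\partial\tilde\alpha$. One checks directly that $\tilde\alpha-\pi\eta\in\TAS(\triang)$ (and $\tilde\alpha-\pi\eta\in\TAS_0(\triang)$ in the parabolic case); since $\TAS(\triang)=\Imag\mathbf{L}^\transp+\Imag\mathbf{L}_\partial^\transp$, Remark~\ref{TAS-and-connected-components} shows that $(-1)^{[\eta]_2}$ lies in the same connected component of $\SAS(\triang)$, resp.\ $\SAS_0(\triang)$, as $\omega$. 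By Definition~\ref{def-Phi} it therefore suffices to prove that $[\rect([\eta]_2)]$ is the obstruction class.

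\textbf{An explicit boundary-compatible $\SLC$-labelling.} Following Yoshida's developing-map construction \cite{yoshida-ideal}, the long, medium and short edges of $\triang_{00}$ can be labelled by explicit elements of $\PSLC$ built from the $z(\square)$ so that the induced $\PSLC$-cocycle realizes $\rho$. I would lift these labels to $\SLC$ by replacing the shape parameters occurring in the edge matrices with the square roots determined by the logarithms $Z$ (equivalently, by the parities $\eta(\square)\bmod 2$), choosing the lifts of the medium edges so that the resulting $1$-cochain is boundary-compatible in the sense of Definition~\ref{def:nice-cocycle}. This produces an $\SLC$-valued $1$-cochain on $\triang_{00}$ projecting to the $\PSLC$-cocycle for $\rho$; its associated $\FF_2$-valued defect $2$-cocycle represents the obstruction class to lifting $\rho$ to $\SLC$, and, once boundary-compatibility is imposed, a relative class in $H^2(M,\partial M;\FF_2)$.

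\textbf{The cocycle computation.} The crux is to evaluate this defect $2$-cocycle on the four types of $2$-cells of $\triang_{00}$. On the large hexagonal faces the boundary product of the chosen lifts is $+I$, as the face pairings are orientation-reversing and no branch ambiguity enters there; on the edge discs it is $+I$ because $z$ satisfies the edge-consistency equations \eqref{edge-consistency-equations} and $\eta$ was chosen with $\mathbf{G}\eta=c$, which makes the branch contributions around each edge cancel; on the boundary hexagons it is $+I$ by boundary-compatibility (together with $\mathbf{G}_\partial\eta=\tfrac1\pi\mathbf{G}_\partial\tilde\alpha$ in the parabolic case); and on the rectangle parallel to a quad $\square$ it equals $(-1)^{\eta(\square)}$, coming from the half-integer exponent carried by the relevant square root. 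Hence the defect cocycle is exactly $\rect([\eta]_2)$, so the obstruction class equals $[\rect([\eta]_2)]=\Phi(\omega)$ in $H^2(M;\FF_2)$, and its relative refinement equals $\Phi_0(\omega)$ in $H^2(M,\partial M;\FF_2)$, proving (i) and (ii). I expect the main obstacle to be precisely this last step: the explicit bookkeeping of the $\SLC$ matrices around a doubly-truncated tetrahedron and the cylinders about the edges, and the verification of boundary-compatibility needed for the boundary-unipotent refinement in (ii) — the passage to doubly-truncated tetrahedra being made exactly to keep the edge-disc and boundary-hexagon contributions manageable.
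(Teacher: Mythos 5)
Your proposal follows essentially the same route as the paper's: the paper reduces the lemma to Theorem~\ref{rect=Obs}, and its proof (Section~\ref{sec:cocycles}) uses a strong combinatorial flattening to orient the $1$-cells of $\triang_{00}$ and to build an explicit $\SLC$-labelling (Rule~\ref{labelling-rule}, with the matrices $S$, $T$, $H_{W(\square)}$), then verifies in Lemma~\ref{labelling-works} and Corollary~\ref{cor:almost-SLC} that the defect $2$-cocycle is exactly $\rect(\delta)$ for $\delta$ the parity of the flattening. Your opening reduction (constructing $\eta$ via Theorem~\ref{generalization-of-6.1}, checking $\tilde\alpha-\pi\eta\in\TAS(\triang)$, and invoking Remark~\ref{TAS-and-connected-components}) is correct and matches the paper. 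Two points where your ``I expect the obstacle to be bookkeeping'' underestimates what actually has to be proved: first, the boundary-hexagon cocycle condition is not free ``by boundary-compatibility'' — boundary-compatibility is exactly what is being established, and it requires the explicit computation (the paper's Cases~1.1 and~1.2) in which the off-diagonal entry $b$ vanishes only because of the shape relations $z''=1-z^{-1}$ together with the flattening parities; second, the $(-1)^{\eta(\square)}$ on a rectangle does not come from a half-integer exponent in the short-edge square roots — the diagonal factors $H_{W(\square)}$ on the two short edges of a rectangle always combine so that $(H_W S)^2=-I$ regardless of $W$ — rather it comes from the relative orientation of the two long edges, which is controlled by $\delta(\square)$ through the co-orientation Rule~\ref{coorient-rule}, and the identity $S^{-1}=-S$. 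Both are fixable, but they are the actual content of the argument, not cosmetic bookkeeping.
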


It is clear that Lemmas~\ref{surjectivity-lemma}--\ref{injectivity-lemma}
jointly imply part~\eqref{obstructions-bijectivity} of Theorem~\ref{obstruction-theorem},
whereas Lemma~\ref{obstruction-lemma} is merely a restatement of part~\eqref{item:correct-obstructions}.
The remainder of this section is devoted to the proof of Lemmas~\ref{surjectivity-lemma}
and~\ref{injectivity-lemma}, whereas the proof of Lemma~\ref{obstruction-lemma}
can be found in Section~\ref{sec:cocycles}.

\begin{proof}[Proof of Lemma~\ref{surjectivity-lemma}]
We start by proving the surjectivity of $\Phi_0$.
Choose any angle structure $\alpha_0\in\AFpar(\triang)$ and
let $\nu\in H^2(M,\partial M;\FF_2)$ be an arbitrary cohomology class.
Applying part~(i) of Theorem~\ref{homogenous-preimage} to the class
$\nu' = \mathcal{R}_0(\alpha_0)-\nu$ yields an element
$s\in\mathcal{S}$ such that $\nu' = [\rect(s)]$.
It is easy to see that $\alpha_0 - s \in\AFpar(\triang)$, because $\mathcal{S}$
is the vector space associated to the affine space~$\AFpar(\triang)\subset\FF_2^{Q(\triang)}$.
Moreover,
\[
    \mathcal{R}_0(\alpha_0-s) = [\rect(\alpha_0) - \rect(s)]
    = (\nu' + \nu) - \nu' = \nu;
\]
since $\nu$ was arbitrary, $\Phi_0$ is surjective.

In order to prove part~(ii), consider the exponential map
\begin{equation}\label{sign-exponential}
    \epsilon: \AFpar(\triang)\to\SAS_0(\triang),
    \qquad
    \epsilon(\alpha)(\square) = (-1)^{\alpha(\square)}.
\end{equation}
Using the commutative diagram~\eqref{Xi-diagram}, we see that
the equality $\Phi = \Xi\circ\Phi_0$ holds
on $\epsilon\bigl(\AFpar(\triang)\bigr)$.
Since we have shown above that
$(\Phi_0\circ\epsilon)\bigl(\AFpar(\triang)\bigr)=H^2(M,\partial M;\FF_2)$,
we must have $\Imag\Phi = \Imag\Xi = \Ker\iota^*$.
\end{proof}

\begin{proof}[Proof of Lemma~\ref{injectivity-lemma}]
In order to prove that $\pi_0(\Phi_0)$ is injective,
consider two elements $\alpha_1,\alpha_2\in\AFpar(\triang)$
such that $\Phi_0\bigl((-1)^{\alpha_1}\bigr)=\Phi_0\bigl((-1)^{\alpha_2}\bigr)$.
Set $s=\alpha_1-\alpha_2$, so that $\rect(s)$ represents the trivial class
in $H^2(M,\partial M; \FF_2)$.
Since $s\in\mathcal{S}$, part~(ii) of Theorem~\ref{homogenous-preimage} implies that
$s\in \Imag\mathbf{L}_2^\transp$.
In other words, there exists an integer vector $r\in\ZZ^{\edges(\triang)}$ such that
$s = \bigl[\mathbf{L}^\transp r\bigr]_2$.
By Remark~\ref{TAS-and-connected-components},
the circle-valued angle structures~$(-1)^{\alpha_1}$ and $(-1)^{\alpha_2}$ lie
in the same connected component of $\SAS_0(\triang)$.

It remains to be shown that $\pi_0(\Phi)$ is injective as well.
To this end, suppose that $\alpha_1,\alpha_2\in\AFpar(\triang)$
satisfy $\Phi\bigl((-1)^{\alpha_1}\bigr)=\Phi\bigl((-1)^{\alpha_2}\bigr)$.
Let $s=\alpha_1-\alpha_2$, so that $\rect(s)$ represents
the trivial class in $H^2(M;\FF_2)$.
We must show that $\alpha_1$ and $\alpha_2$ lie in the same connected
component of $\SAS(\triang)$.
In view of Remark~\ref{TAS-and-connected-components}, this is equivalent to showing
that $s$ is an element of $\Imag\mathbf{L}_2^\transp + \Imag[\mathbf{L_\partial}]_2^\transp$.

Referring to the commutative diagram~\eqref{Xi-diagram}, we may first
consider the \emph{relative} cohomology class~$[\rect(s)]\in H^2(M,\partial M;\FF_2)$
which satisfies $[\rect(s)]\in\Ker\Xi=\Imag\Delta^1$.
Let $\Theta$ be a finite set of peripheral curves in normal position with respect to $\triang$
whose homology classes span $H_1(\partial M;\FF_2)$; it suffices to take two curves in
each boundary component.
Then
\[
    \Imag\Delta^1 = \linspan_{\FF_2} \left\{\Delta^1\bigl(\PD_1([\theta])\bigr):\theta\in\Theta\right\},
\]
where $\PD_1:H_1(\partial M;\FF_2)\to H^1(\partial M;\FF_2)$ is the Poincar\'e duality isomorphism.
By Lemma~\ref{rectal-lemma}, $[\rect(s)]\in\Imag\Delta^1$ must therefore
be a linear combination of relative cohomology classes of the form $[\rect([l_\theta]_2)]$
for $\theta\in\Theta$, i.e., $[\rect(s)]=[\rect(x)]\in H^2(M,\partial M;\FF_2)$
for some element $x\in\Imag[\mathbf{L_\partial}]_2^\transp$.
By Remark~\ref{ltds-mod-2-contained-in-S}, we have $\Imag[\mathbf{L_\partial}]_2^\transp\subset\mathcal{S}$,
so that $s-x\in\mathcal{S}$.
Since $\rect(s-x)$ is zero in $H^2(M,\partial M;\FF_2)$,
part~(ii) of Theorem~\ref{homogenous-preimage} implies that $s-x\in\Imag\mathbf{L}_2^\transp$,
so that $s\in\Imag\mathbf{L}_2^\transp + \Imag[\mathbf{L_\partial}]_2^\transp$, as required.
\end{proof}


\section{Obstruction cocycles from hyperbolic shapes}
\label{sec:cocycles}

Suppose that $\rho=\rho_z:\pi_1(M)\to\PSLC$ is a representation determined, up
to conjugacy, by an algebraic solution~$z$ of Thurston's gluing equations on an oriented ideal triangulation 
$\triang$.  The goal of this section is to construct an explicit cellular
cocycle representing the obstruction class to lifting $\rho$ to an $\SLC$
representation. This cocycle will be defined on the CW complex of doubly-truncated tetrahedra $\triang_{00}$ used in the previous section,
The construction will be carried out in terms of the solution
$z$ as well as an extra piece of structure introduced by W.~Neumann, called a
\emph{strong combinatorial flattening} \cite{neumann-BPSL}.

The main theorem of this section is the
following.

\begin{theorem}\label{rect=Obs}
Let $z$ be an algebraic solution of Thurston's edge consistency equations on $\triang$ and
let $\rho=\rho_z$ be an associated representation $\rho:\pi_1(M)\to\PSLC$.
\begin{enumerate}[(i)]
\item\label{item:noperiph-Obs}
The connected component of $\SAS(\triang)$ containing
the circle-valued angle structure~$\omega(\square)=z(\square)/|z(\square)|$ also contains an
element of the form $(-1)^{\alpha}$, where $\alpha\in\AFpar(\triang)$
is such that $\Obs(\rho) = [\rect(\alpha)]\in H^2(M;\FF_2)$.
\item\label{item:periph-Obs}
If, additionally, $z$ satisfies the completeness equations, so that $\rho$ is boundary-parabolic,
then the connected component of $\SAS_0(\triang)$ containing
$\omega$ also contains an element of the form $(-1)^{\alpha}$, where $\alpha\in\AFpar(\triang)$
is such that $\Obs_0(\rho) = [\rect(\alpha)]\in H^2(M,\partial M;\FF_2)$.
\end{enumerate}
\end{theorem}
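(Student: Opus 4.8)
The plan is to reduce the identification of the obstruction classes with classes of the form $[\rect(\alpha)]$ to an explicit cellular cocycle computation on the doubly-truncated complex $\triang_{00}$, following the classical recipe for the obstruction to lifting a $\PSLC$-representation (cf. \cite{culler-lifting} and \cite[\S9.1]{garoufalidis-thurston-zickert}): label the $1$-cells of $\triang_{00}$ by elements of $\PSLC$ so that the associated $\PSLC$-cochain descends to $\rho$, lift each label to $\SLC$, and record for every $2$-cell whether the product of the lifts around its boundary is $+I$ or $-I$. The whole point is to make these choices \emph{geometrically}, driven by the hyperbolic shapes $z$ together with a strong combinatorial flattening of Neumann \cite{neumann-BPSL}, so that the resulting $\FF_2$-valued cocycle is visibly equal to $\rect(\alpha)$ with $\alpha = [\eta]_2$, where $\eta$ is (the angle part of) the flattening.

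\textbf{Step 1 (flattening and component).} From $z$ we first extract the even integer vectors $c\in 2\ZZ^{\edges(\triang)}$ and $d\in 2\ZZ^{2k}$ describing the connected component of $\omega = z/|z|$ in $\SAS(\triang)$ (resp. in $\SAS_0(\triang)$, where $d=0$), exactly as in the proof of Proposition~\ref{Phis-welldef}; by \cite[Theorem~4.1]{neumann1990} we have $p(c-\mathbf 2)=0$, so Theorem~\ref{generalization-of-6.1} provides an $\eta\in\ZZ^{Q(\triang)}$ with $\eta(\square)+\eta(\square')+\eta(\square'')=1$, $\mathbf G\eta=c$, $\mathbf G_\partial\eta=d$, and even parity along every curve. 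Put $\alpha:=[\eta]_2\in\AFpar(\triang)$. Since $\pi\eta$ and a suitable real pseudo-angle lift $\xi$ of $\omega$ both solve the tetrahedral and edge equations with right-hand sides $\pi$ and $\pi c$ (and, in the boundary-trivial case, $\mathbf G_\partial=\pi d=0$ as well), their difference lies in $\TAS(\triang)=\Imag\mathbf L^\transp+\Imag\mathbf L_\partial^\transp$ (resp. in $\TAS_0(\triang)=\Imag\mathbf L^\transp$); Remark~\ref{TAS-and-connected-components} then places $(-1)^\alpha$ in the connected component of $\omega$. This settles the ``component'' half of both parts, and it remains to prove $\Obs(\rho)=[\rect(\alpha)]$ and, in the boundary-parabolic case, $\Obs_0(\rho)=[\rect(\alpha)]$.

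\textbf{Step 2 (the $\SLC$-labelling).} Using the pseudo-developing map attached to $z$ \cite{yoshida-ideal}, label the $1$-cells of $\triang_{00}$ by $\PSLC$-elements descending to $\rho$: long edges carry the edge isometries of the developed doubly-truncated tetrahedra, medium (boundary) edges carry the induced upper-triangular Möbius transformations of the cusp cross-sections, and short edges (encircling the edges of $\triang$) carry the identity. Then lift these to $\SLC$ using the flattening $\eta$ as in \cite{neumann-BPSL}: a long edge is lifted by the $\SLC$-matrix built from square roots of the shape parameters whose branches are dictated by $\eta$, the short edges are lifted by $I$, and the medium edges are lifted by upper-triangular matrices, which may be taken \emph{unipotent} precisely when $\rho$ is boundary-parabolic. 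I would formalise this as the statement that the resulting $\SLC$-cochain is \emph{boundary-compatible} in the sense of Definition~\ref{def:nice-cocycle} and still projects to $\rho$.

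\textbf{Step 3 (computing the cocycle).} Evaluate the obstruction $2$-cocycle of this labelling on each type of $2$-cell of $\triang_{00}$. On the large hexagonal faces the product of lifts is $+I$, since these cells encode face-pairings and the lifts were chosen consistently across faces; on the edge discs it is $+I$ because the short edges were lifted trivially; on the boundary hexagons the product of upper-triangular matrices projecting to the identity is a priori $\pm I$, but it can be made $+I$ after a boundary coboundary modification of the medium-edge lifts — possible since $\iota^*(\Obs(\rho))=0$, $\rho$ having upper-triangular holonomy on $\partial M$ — and it is automatically $+I$ when the medium-edge lifts are unipotent, i.e. in part~\eqref{item:periph-Obs}. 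Finally, on each rectangle parallel to a quad $\square$ the coboundary of the $\SLC$-lift detects precisely the sign $(-1)^{\eta(\square)}$ prescribed by the flattening integer $\eta(\square)$. Hence the obstruction cocycle equals $\rect([\eta]_2)=\rect(\alpha)$, whence $\Obs(\rho)=[\rect(\alpha)]$ in $H^2(M;\FF_2)$; and when $z$ is complete the same cocycle is relative, so $\Obs_0(\rho)=[\rect(\alpha)]$ in $H^2(M,\partial M;\FF_2)$.

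\textbf{Main obstacle.} The technical heart is Steps~2--3: one must fix the developed doubly-truncated tetrahedron together with explicit $\SLC$-matrices on its three edge types so that the cochain (a)~descends to $\rho$, (b)~is boundary-compatible, and (c)~has coboundary supported exactly on the rectangles, with value $(-1)^{\eta(\square)}$ there. The reason a \emph{strong} flattening is needed — rather than an ordinary one — is precisely (c): the extra parity condition (condition~4 of Theorem~\ref{generalization-of-6.1}) is what rules out stray $-I$'s on the hexagonal faces and forces the obstruction cocycle into the image of $\rect$.
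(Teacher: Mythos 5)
Your high-level plan matches the paper's strategy: use Theorem~\ref{generalization-of-6.1} to produce a strong combinatorial flattening whose mod-$2$ part $\alpha=[\eta]_2$ picks out the right component, then build a boundary-compatible $\SLC$-labelling on $\triang_{00}$ whose failure of the cocycle condition is supported on rectangles with value $(-1)^{\eta(\square)}$. Step~1 is essentially the paper's argument (cf.\ the proof of Theorem~\ref{rect=Obs}: $W = Z - i\pi\eta$, $\Imag W\in\TAS(\triang)$, Remark~\ref{TAS-and-connected-components}). However, there is a genuine gap in Steps~2--3, which you yourself flag as the ``technical heart'' and which is where the paper does nearly all the work.

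Specifically, the $\SLC$-labelling you propose is not the paper's, and as described it is unlikely to produce the cocycle $\rect(\alpha)$. You place the shape (flattening) data on the \emph{long} edges and the identity on the \emph{short} edges. The paper does the opposite: Rule~\ref{labelling-rule} puts the \emph{constant} matrices $S$, $T$ on long and medium edges (with orientations dictated by the parity $\delta_W$ via Rules~\ref{coorient-rule}--\ref{rule:short-edges}), and puts the diagonal matrices $H_{W(\square)}$ on the short edges encircling the edges of $\triang$. The same is true of the pre-flattening version $\mathcal{L}'$ from \cite{siejakowski-thesis} invoked in Step~4 of the proof of Lemma~\ref{labelling-works}. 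With identity on short edges, the edge-disc cocycle condition becomes vacuous (losing the use of $\mathbf{G}W=0$), and the rectangle holonomy is a product of two long-edge labels living on distinct large hexagonal faces, which you have not shown differ by exactly $(-1)^{\eta(\square)}$; at the same time the large hexagonal faces now carry all the shape dependence, so the assertion that their holonomy is ``$+I$, since these cells encode face-pairings'' requires a genuine computation that your proposal does not supply. The paper's Lemma~\ref{labelling-works} (Steps~1--3, Cases~1.1--1.2 and~3.1--3.2) carries out precisely this matrix algebra: the boundary hexagons give $+I$ by computing $b=0$ and $b'=0$ from the relation $z'=1/(1-z)$, the large hexagonal faces give $(TS)^3=I$, the edge discs give $+I$ via $\mathbf G W=0$, and the rectangle gives $\pm(H_WS)^2=(-1)^{\delta}I$. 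None of this is replicated in your sketch, and your alternative placement of the data means these verifications would look quite different (and may fail) for your labelling. Until you either switch to the paper's labelling and do the matrix checks, or carry out the analogous checks for yours, the statement $\Obs(\rho)=[\rect(\alpha)]$ is not established.

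A secondary point: the ``boundary coboundary modification'' you invoke to clean up the boundary hexagons (for part~(i)) will also change the cocycle on the large hexagonal faces adjacent to those medium edges, so it is not obviously harmless; and the lifting of boundary holonomy to upper-triangular $\SLC$ has to be justified (it works here because the boundary holonomy of a $\rho_z$ is diagonal/parabolic, but this uses the shape-parameter origin of $\rho$, not merely $\iota^*\Obs(\rho)=0$). The paper sidesteps both issues by baking condition~(iii) of Definition~\ref{def:flattening}, $\mathbf G_\partial W = 0$, into the construction, which forces the boundary holonomy to be unipotent by a short calculation (Step~2 of Lemma~\ref{labelling-works}).
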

In particular, the above theorem and Definition~\ref{def-Phi}
immediately imply Lemma~\ref{obstruction-lemma}.

The obstruction cocycles we will be studying are discussed by \cite{culler-lifting,acuna-montesinos}, and by \cite{zickert-volrep} for the case with peripheral constraints.

\subsection{Obstruction theory via group cocycles}

In what follows, we fix an algebraic solution $z\in\left(\CC\setminus\{0,1\}\right)^{Q(\triang)}$
of the edge consistency and completeness equations
on $\triang$ and we suppose that $\rho=\rho_z:\pi_1(M)\to\PSLC$ is the associated boundary-parabolic representation,
defined up to conjugation only.
An explicit way of describing the conjugacy class of $\rho$ is via a
\nword{G}{cocycle} on a cell complex representing $M$, where $G=\PSLC$.
A particularly simple construction of this kind, in terms of the shape
parameters $z$ and the cell complex $\triang_{00}$, is described by
the third named author in \cite[\S2.3.2]{siejakowski-thesis}.

Since we are interested in lifting $\rho$ to $\SLC$,
we are going to label the oriented \nword{1}{cells} of the cell complex $\triang_{00}$
with elements of $\SLC$ which project to a \nword{\PSLC}{cocycle} under
the quotient homomorphism $\SLC\to\PSLC$.
The failure of such labellings to satisfy the full cocycle conditions
over $\SLC$ will then lead to an explicit obstruction cocycle for $\rho$.
However, in order to find the \emph{relative} obstruction
class~$\Obs_0(\rho)\in H^2(M,\partial M;\FF_2)$ in this way,
we must choose an \nword{\SLC}{labelling} which already satisfies the full
cocycle conditions on $\partial M$, so that the associated obstruction class vanishes
on the boundary.
This is always possible, because the boundary subgroups are abelian.

\begin{definition}[boundary-compatible labelling for $z$]\label{def:nice-cocycle}
Let $\mathcal{L}$ be a labelling of oriented \nword{1}{cells} of $\triang_{00}$
with elements of $\SLC$.
Then, $\mathcal{L}$ is a \emph{boundary-compatible labelling for $z$} if it satisfies the
following conditions.
\begin{enumerate}[(i)]
    \item\label{item:lab-SL-cocyc-bdry}
    The restriction of $\mathcal{L}$ to $\partial\triang_{00}$ is an \nword{\SLC}{cocycle}.
    \item\label{item:unipotent}
    For any component $T$ of $\partial\triang_{00}$, the restriction $\mathcal{L}\bigr|_T$
    determines a unipotent representation of $\pi_1(T)$ (up to conjugation).
    \item\label{item:lab-desc-to-PSL}
    Under the quotient homomorphism $\SLC\to\PSLC$, the labelling~$\mathcal{L}$
    descends to a \nword{\PSLC}{cocycle} recovering the conjugacy class of
    the boundary-parabolic representation~$\rho$ determined by the shapes $z$.
\end{enumerate}
\end{definition}

Given a boundary-compatible labelling~$\mathcal{L}$, we can easily find
a cellular representative of $\Obs_0(\rho)$.
To this end, we construct a cellular \nword{2}{cocycle} whose value on an oriented \nword{2}{cell}
is equal to the product of the labels around the oriented boundary of the cell;
this product will always be $+1$ or $-1$, where $1\in\SLC$ is the identity matrix.
By identifying the multiplicative group $\{\pm1\}$ with the additive group $\FF_2$
of the two-element field, we obtain an \nword{\FF_2}{valued} relative cocycle
representing the obstruction class~$\Obs_0(\rho)\in H^2(M,\partial M;\FF_2)$.

\subsection{Combinatorial flattenings}
\label{combinatorial_flattenings_section}

Constructing a boundary-compatible labelling for any given shape parameter
solution~$z$ of Thurston's edge consistency and completeness equations
will require an auxiliary object, called a \emph{strong flattening for $z$},
introduced by W.~Neumann~\cite{neumann-BPSL}.

Since the shape parameters $z$ satisfy $z(\square)z(\square')z(\square'')=-1$,
we can always choose branches of their logarithms $Z(\square)=\log z(\square)$
in such a way that $Z(\square)+Z(\square')+Z(\square'')=i\pi$ in every
tetrahedron.
These log-parameters will then satisfy gluing equations of the form
\begin{equation}\label{lifted-logeqs}
\left\{
    \begin{aligned}
        \mathbf{G} Z &= \pi i c,\\
        \mathbf{G}_\del Z &= \pi i d,
    \end{aligned}\right.
\end{equation}
where $c\in2\ZZ^{\edges(\triang)}$ and where
\(
    d = (d^1_\mu, d^1_\lambda, d^2_\mu, d^2_\lambda, \dotsc, d^k_\mu, d^k_\lambda)^\transp
\)
is a vector with even entries corresponding to the chosen
system~$\{\mu_j,\lambda_j\}_{j=1}^k$ of peripheral curves satisfying $\intersection(\mu_j,\lambda_j)=1$;
cf. also eq.~\eqref{completeness-equations-mod-2pi}.
In \cite{neumann-BPSL}, Neumann introduced a way of adjusting the log-parameters~$Z$
by integer multiples of $\pi i$ so as to homogenise the equations \eqref{lifted-logeqs}.
\begin{definition}[cf. Neumann~{\cite[Definition~3.1]{neumann-BPSL}}]\label{def:flattening}
A \emph{combinatorial flattening} for $(\triang,z)$ is a vector $W\in\CC^{Q(\triang)}$
of the form
\begin{equation}\label{form-of-flattening}
    W(\square) = Z(\square) - i\pi f(\square),\quad f(\square)\in\ZZ
\end{equation}
satisfying the following conditions:
\begin{enumerate}[(i)]
\item $\displaystyle W(\square)+W(\square')+W(\square'') = 0$ in every tetrahedron;
\item\label{item:sum-of-flatt-around-edge-equals-zero}
$\displaystyle \mathbf{G}W=0$,
\item $\displaystyle \mathbf{G}_\partial W=0$, i.e., the signed sum along all peripheral curves is zero.
\end{enumerate}
\end{definition}

Note that for any combinatorial flattening~$W$, the parity of the vector
$f\in\ZZ^{Q(\triang)}$ occurring in \eqref{form-of-flattening} does not depend on the choice of branches
of the logarithms.
In other words, we can associate to $W$ a well-defined vector $\delta=\delta_W\in\FF_2^{Q(\triang)}$
given by the remainder of $f$ modulo 2:
\begin{equation}\label{def-parity-of-flattening}
    \delta(\square) = [f(\square)]_2\in\FF_2\quad\text{for all}\ \square\in Q(\triang).
\end{equation}

\begin{definition}\label{strong-flattening}
A combinatorial flattening~$W$ is called \emph{strong} when $\parity_\theta(\delta)=0$ for
all unoriented curves~$\theta$ in normal position with respect to the triangulation
and without backtracking, in the sense of Definition~\ref{def-parity}.
\end{definition}

\begin{theorem}[Neumann,~{\cite[{Theorem~4.5}]{neumann-BPSL}}]\label{th:strong-flatt-existence}
Given any algebraic solution~$z$ of edge consistency and completeness equations,
there exists a strong combinatorial flattening~$W$ for $z$.
\end{theorem}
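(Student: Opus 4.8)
The plan is to recognise that, after stripping off a branch of the logarithm, a strong combinatorial flattening for $z$ is precisely an integer vector of the kind produced by Theorem~\ref{generalization-of-6.1}. The existence statement will then follow by feeding the right-hand sides appearing in~\eqref{lifted-logeqs} into that theorem.

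First I would fix branches of the logarithms $Z(\square)=\log z(\square)$ with $Z(\square)+Z(\square')+Z(\square'')=i\pi$ in every tetrahedron, as in the paragraph preceding Definition~\ref{def:flattening}, and record the resulting even vectors $c\in 2\ZZ^{\edges(\triang)}$ and $d\in 2\ZZ^{2k}$ determined by $\mathbf{G}Z=\pi i c$ and $\mathbf{G}_\del Z=\pi i d$, cf.~\eqref{lifted-logeqs}. To apply Theorem~\ref{generalization-of-6.1} the only point still requiring justification is $p(c-\mathbf{2})=0$; this holds because $c$ comes from a genuine solution of the edge consistency equations, and it is exactly the constraint on admissible right-hand sides identified by Neumann in \cite[Theorem~4.1]{neumann1990} --- the same fact already invoked in the proof of Proposition~\ref{Phis-welldef}.

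Next I would apply Theorem~\ref{generalization-of-6.1} with these $c$ and $d$, obtaining a vector $\eta\in\ZZ^{Q(\triang)}$ satisfying conditions~1--4 there, and set $f:=\eta$ and $W:=Z-i\pi f$. Conditions~1, 2 and 3 of Theorem~\ref{generalization-of-6.1} translate term by term into the three defining conditions of a combinatorial flattening in Definition~\ref{def:flattening}: combined with $Z(\square)+Z(\square')+Z(\square'')=i\pi$, the equation $\eta(\square)+\eta(\square')+\eta(\square'')=1$ gives $W(\square)+W(\square')+W(\square'')=0$; combined with $\mathbf{G}Z=\pi i c$, the equation $\mathbf{G}\eta=c$ gives $\mathbf{G}W=0$; and combined with $\mathbf{G}_\del Z=\pi i d$, the equation $\mathbf{G}_\del\eta=d$ gives $\mathbf{G}_\del W=0$. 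Finally, the parity vector of $W$ is $\delta_W=[f]_2=[\eta]_2$ --- well defined independently of the chosen branches, as remarked after Definition~\ref{def:flattening} --- so condition~4 of Theorem~\ref{generalization-of-6.1}, namely $\parity_\theta(\eta)=\parity_\theta(\delta_W)=0$ for every closed curve~$\theta$ in normal position without backtracking, is exactly the requirement that $W$ be strong in the sense of Definition~\ref{strong-flattening}.

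I do not expect a genuine obstacle here: once Theorem~\ref{generalization-of-6.1} is available, the argument is a straightforward dictionary between flattening data and integer pseudo-angle data, and the statement is in effect a corollary of that theorem. The one checkpoint worth isolating is the verification that $p(c-\mathbf{2})=0$, which is a structural feature of Thurston's gluing equations rather than anything specific to flattenings.
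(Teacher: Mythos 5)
Your proposal is correct and follows exactly the route the paper only sketches: the text states that ``a simple proof also follows from Theorem~\ref{generalization-of-6.1}'' without writing it out, and your dictionary between conditions~1--4 of that theorem and Definitions~\ref{def:flattening}--\ref{strong-flattening} via $W:=Z-i\pi\eta$ is precisely how the paper uses Theorem~\ref{generalization-of-6.1} to produce a flattening in the proof of Theorem~\ref{rect=Obs}, only there with condition~3 dropped. Your isolation of $p(c-\mathbf{2})=0$ as the one hypothesis requiring justification, and the appeal to Neumann's Theorem~4.1 of \cite{neumann1990} for it, matches the paper's own use of that citation in the proof of Proposition~\ref{Phis-welldef}, so there is no gap.
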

Neumann's proof of the above theorem in \cite{neumann-BPSL} relies on the combinatorial
results of \cite{neumann1990}. A simple proof also follows from Theorem~\ref{generalization-of-6.1}, proved in this article.

\subsection{From flattenings to projective cocycles}

From now on, we assume that $W$ is a strong combinatorial flattening for the
algebraic solution~$z$ of edge consistency and completeness equations on $\triang$.
Using $W$, we will produce an explicit boundary-compatible labelling for $z$ in the sense of
Definition~\ref{def:nice-cocycle}.

\subsubsection{\texorpdfstring{Choice of orientations of 1-cells in $\mathcal{T}_{00}$}%
{Choice of orientations of 1-cells in T\char`_00}}

Let $F_0$ be an arbitrarily chosen face of the triangulation $\triang$,
equipped with a transverse orientation (co-orientation).
With the help of the parity vector $\delta$ defined in \eqref{def-parity-of-flattening},
we can unambiguously extend this co-orientation to all other faces of $\triang$ according to the following rule.
\begin{ruledup}\label{coorient-rule}
Suppose that $F_1$ and $F_2$ are two faces of a tetrahedron with a parity
parameter $\delta(\square)\in\FF_2$ along the tetrahedral edge common to $F_1$ and $F_2$.
\begin{itemize}
\item If $\delta(\square)=0$, one of the two faces must be co-oriented into the tetrahedron and the other
out of the tetrahedron;
\item If $\delta(\square)=1$, either both faces must be co-oriented into the tetrahedron or both out of the tetrahedron.
\end{itemize}
\end{ruledup}
Observe that the three parities in a tetrahedron always satisfy
$\delta(\square)+\delta(\square')+\delta(\square'')=1$, so that either exactly one of them is $1$ or all
three of them are $1$.
In the latter case, either all faces of the tetrahedron are co-oriented inwards or all outwards.%
\begin{figure}
     \centering
     \begin{tikzpicture}
         \node[anchor=south west,inner sep=0] (image) at (0,0)
         {\includegraphics[width=0.75\columnwidth]{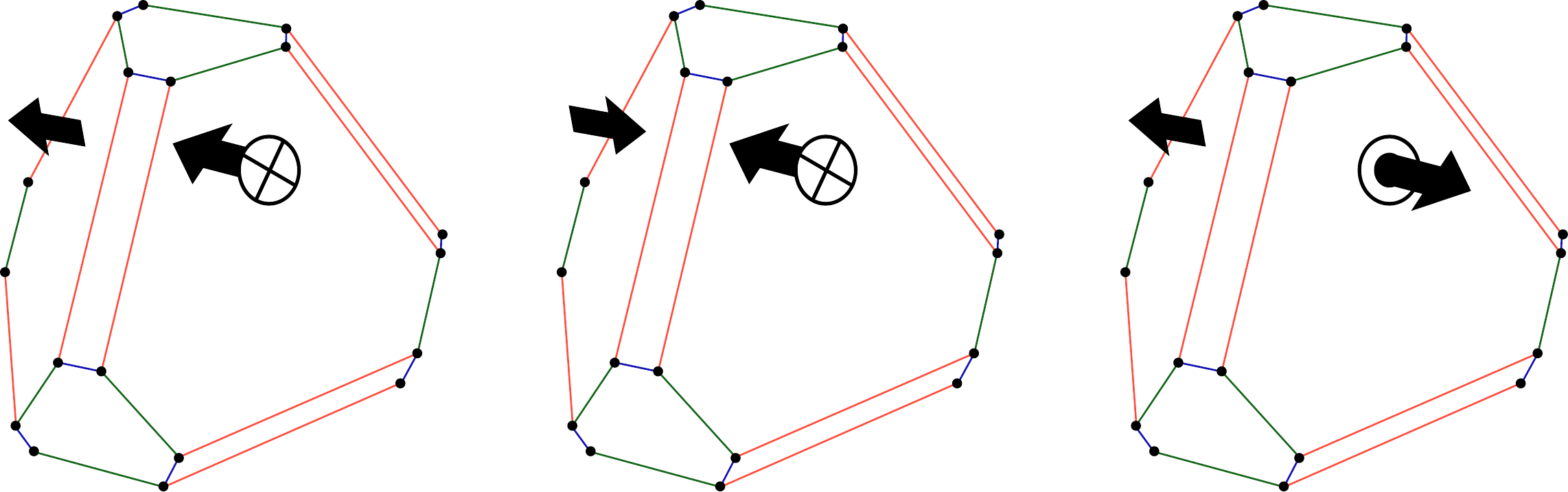}};
         \begin{scope}[x={(image.south east)},y={(image.north west)}]
             \node[anchor=west] at (0.073, 0.4)  {$\delta=0$};
             \node[anchor=west] at (0.429, 0.4)  {$\delta=1$};
             \node[anchor=west] at (0.785, 0.4)  {$\delta=1$};
         \end{scope}
     \end{tikzpicture}
     \caption{%
     The rule for continuing the co-orientations of faces across a tetrahedron
     depending on the value of the parity vector $\delta$ on the normal quadrilateral
     facing the edge common to the two faces.
     }\label{fig:coorient-monodromy}
\end{figure}
Rule~\ref{coorient-rule} is illustrated in Figure~\ref{fig:coorient-monodromy}.

In order to see that Rule~\ref{coorient-rule} determines a consistent co-orientation of all faces
of $\triang$, it suffices to consider a closed path $\gamma$ based at the initial face $F_0$.
We may assume that $\gamma$ lies in the complement of the edges of $\triang$,
that it intersects the faces transversely, and that it does not backtrack.
Since the combinatorial flattening $W$ is strong,
we have $\parity_\gamma(\delta)=0$, and the co-orientations of the large hexagonal
faces intersected by $\gamma$, when continued along $\gamma$ in accordance with Rule~\ref{coorient-rule},
will switch an even number of times.
In particular, the continued co-orientation of $F_0$ will agree with the original one.
In fact, the co-orientation of the initial face $F_0$ is the only arbitrary choice in this construction.

\begin{remark}
An alternative version of the above statement is in terms of an \nword{\FF_2}{cocycle}
which assigns $1$ to all long and medium edges and $\delta(\square)$ to each short edge facing the quad~$\square$.
This cocycle determines an \nword{\FF_2}{cover} of $M$ given as the total
space of the principal \nword{\FF_2} bundle corresponding to the cocycle.
The vanishing of the parity of $\delta$ implies that this cover is trivial, i.e., homeomorphic to $M\times\FF_2$,
and the choice of the co-orientation of the initial face~$F_0$ amounts to picking one of the
two sheets of the cover.
\end{remark}

We shall now use these consistent co-orientations of the large hexagonal faces of $\triang_{00}$
to orient the \nword{1}{cells} of $\triang_{00}$.
To this end, we fix an arbitrary orientation of the manifold $M$.
Recall that there are three types of edges in $\triang_{00}$: the \emph{long},
\emph{medium} and \emph{short} edges, shown in Figure~\ref{fig:doubly-truncated}.

Suppose that the initial co-orientation of $F_0$ has been continued onto all faces of the triangulation
$\triang$, hence onto all large hexagonal faces of $\triang_{00}$.
We may now use the right-hand rule to canonically orient the long and medium edges in $\triang_{00}$,
as illustrated in Figure~\ref{fig:coorientation}.
\begin{figure}
    \centering
    \parbox{0.25\columnwidth}{%
    \includegraphics[width=0.25\columnwidth]{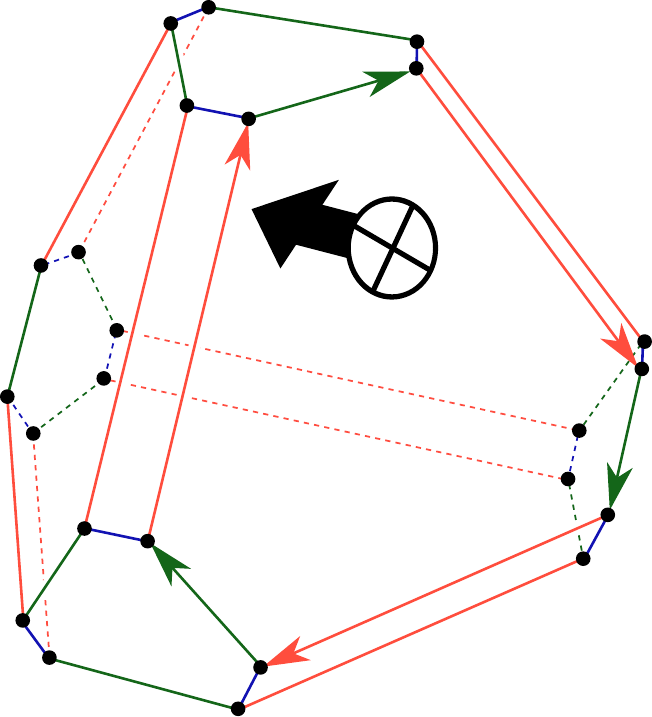}%
    }\parbox{0.5\columnwidth}{
    \caption{\label{fig:coorientation}
    The rule for orienting the long and medium edges based on a co-orientation of a
    large hexagonal face.
    }}
\end{figure}
It remains to orient the short edges of $\triang_{00}$, which is done with the following rule.
\begin{ruledup}\label{rule:short-edges}
We use an outward facing normal vector at each end of an edge of $\triang$ and orient the
short edges using the right-hand rule (hence, anticlockwise when looking from the boundary).
\end{ruledup}

Although we have chosen an arbitrary ambient orientation of $M$, choosing the opposite orientation
results in the reversal of all edges in $\triang_{00}$, which does not affect the considerations that follow.

\subsubsection{\texorpdfstring{The labelling with $\SLC$ matrices}{The labelling with SL(2,C) matrices}}
Suppose that the \nword{1}{cells} of the complex~$\triang_{00}$ are all oriented
in the way described above, in terms of the parity $\delta=\delta_W$ of a strong combinatorial
flattening $W$.
We may now label these oriented \nword{1}{cells} with the elements of $\SLC$ given below.

\begin{ruledup}[Labelling rule]\hfill\\[-4ex]\label{labelling-rule}
\begin{itemize}
\item Each oriented long edge is labelled by the matrix
    $\displaystyle S = \begin{bmatrix}0 & -1\\1 & 0\end{bmatrix}$.
\item Each oriented medium edge is labelled by the matrix
    $\displaystyle T = \begin{bmatrix}1 & -1\\0 & 1\end{bmatrix}$.
\item Each oriented short edge is labelled by the matrix
    \[
    H_{W(\square)}
    = \begin{bmatrix} \exp\bigl(\frac12 W(\square)\bigr) & 0\\
    0 & \exp\bigl(-\frac12 W(\square)\bigr)\end{bmatrix},
    \]
    where $W(\square)$ is the value of the strong combinatorial flattening~$W$ on the quad
    parallel to the rectangle whose boundary contains the short edge.
\end{itemize}
\end{ruledup}
Since we wish for this labelling to descend to a projective cocycle under the
homomorphism $\SLC\to\PSLC$, it is to be understood that changing the orientation of an edge
replaces its label~$A\in\SLC$ with $A^{-1}$.
Observe in particular that $\displaystyle T^{-1} = \begin{bmatrix}1 & +1\\0 & 1\end{bmatrix}$
and $S^{-1}=-S$.

\subsection{Cocycle conditions}
Our main goal now is to prove that the labelling $\mathcal{L}$ defined
by Rule~\ref{labelling-rule} has all the desired properties needed
to compute obstruction cocycles.

\begin{lemma}\label{labelling-works}
For every strong combinatorial flattening $W$ for $z$,
the labelling $\mathcal{L}$ of Rule~\ref{labelling-rule} is a boundary-compatible
labelling for $z$ in the sense of Definition~\ref{def:nice-cocycle}.
\end{lemma}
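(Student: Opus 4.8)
The goal is to verify the three conditions of Definition~\ref{def:nice-cocycle} for the labelling $\mathcal{L}$ of Rule~\ref{labelling-rule}. The plan is to treat them in the order (iii), (i), (ii), because the projective statement is needed as input for the two $\SLC$-level refinements.

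First I would show that $\mathcal{L}$ descends to a $\PSLC$-cocycle; equivalently, that around the boundary of every $2$-cell of $\triang_{00}$ the product of the labels lies in $\{\pm I\}\subset\SLC$. This is a finite check over the four types of $2$-cell, carried out with the edge orientations prescribed by Rule~\ref{coorient-rule} and Rule~\ref{rule:short-edges}. For large hexagonal faces the relation reduces to the elementary identity $(ST)^3=I$ (and its variants under $S\mapsto S^{-1}$, $T\mapsto T^{-1}$); for rectangles it reduces to $(SH_{W(\square)})^2=-I$; both hold for any value of the flattening. For boundary hexagons the three short-edge labels face the three distinct quad types of a single tetrahedron, and the tetrahedral relation $W(\square)+W(\square')+W(\square'')=0$ of Definition~\ref{def:flattening}(i) makes the product reduce to $\pm I$. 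For the edge discs, whose boundary loop meets the short edge facing $\square$ exactly $G(E,\square)$ times, the product of the short-edge labels equals $H_{\sum_\square G(E,\square)W(\square)}=H_{(\mathbf{G}W)_E}=H_0=I$ by Definition~\ref{def:flattening}(ii). Having established that $\mathcal{L}$ descends to a $\PSLC$-cocycle, I would identify this cocycle with the one attached to $z$ and $\triang_{00}$ in \cite[\S2.3.2]{siejakowski-thesis}, whose holonomy is the class of $\rho=\rho_z$ by Yoshida's construction \cite{yoshida-ideal}; the remaining choices in Rule~\ref{labelling-rule} (the co-orientation of the initial face $F_0$, the ambient orientation of $M$, and the branch of the logarithm in $W=Z-i\pi f$) alter the cocycle only by a coboundary or an overall conjugation, so its class is exactly that of $\rho$. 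This gives condition~(iii) of Definition~\ref{def:nice-cocycle}.

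Next I would sharpen the boundary part of the previous step from $\pm I$ to $+I$, which is condition~(i): around every $2$-cell of $\partial\triang_{00}$ — that is, every edge disc and every boundary hexagon — the product of the labels equals the identity matrix in $\SLC$. For edge discs this was already obtained. For a boundary hexagon incident to a tetrahedron $\tet$, the relation $W(\square)+W(\square')+W(\square'')=0$ kills the diagonal part of the product, and one must verify that the remaining unipotent contribution vanishes as well; this is where the co-orientation scheme of Rule~\ref{coorient-rule} enters essentially. That scheme is globally well-defined precisely because $W$ is \emph{strong} (see the discussion following Rule~\ref{coorient-rule}), and it fixes the orientations of the three medium and three short edges of the hexagon so that the resulting word in the matrices $S$, $T$ and $H_{W(\square)}$ evaluates to $+I$ rather than $-I$. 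Carrying out this computation, while tracking the signs built into $S$, $T$, $H$ in Rule~\ref{labelling-rule}, yields condition~(i).

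Finally, for condition~(ii): the $1$-skeleton of $\partial\triang_{00}$ contains only medium and short edges, so for any loop $\gamma$ in a component $T$ of $\partial\triang_{00}$ the element $\mathcal{L}(\gamma)$ is a product of the upper-triangular matrices $T^{\pm1}$ and $H_{W(\square)}^{\pm1}$, hence itself upper-triangular, with top-left entry $\exp\!\bigl(\tfrac12\sum_\square G(\gamma,\square)\,W(\square)\bigr)=\exp\!\bigl(\tfrac12(\mathbf{G}_\partial W)_\gamma\bigr)$. For $\gamma$ one of the chosen peripheral curves $\mu_j$ or $\lambda_j$ this exponent vanishes by Definition~\ref{def:flattening}(iii), and since the top-left entry is multiplicative in $\gamma$ it then vanishes for all $\gamma\in\pi_1(T)$; thus $\mathcal{L}|_T$ takes values in the unipotent group $\bigl\{\left[\begin{smallmatrix}1&*\\0&1\end{smallmatrix}\right]\bigr\}$, after the conjugation putting $\rho|_{\pi_1(T)}$ in standard parabolic form. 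I expect the sign bookkeeping in condition~(i) to be the main obstacle: turning Rule~\ref{coorient-rule} and Rule~\ref{rule:short-edges} into explicit orientations of the edges bounding each boundary hexagon and checking that the product of $S$-, $T$- and $H$-matrices picks up the sign $+1$ on the nose, with both the role of the strong flattening and the precise signs of the matrices of Rule~\ref{labelling-rule} coming into play. The identification in condition~(iii) of the projected cocycle with $\rho$ also needs care, but it is essentially the classical developing-map computation of \cite{neumann-BPSL,zickert-volrep,yoshida-ideal}.
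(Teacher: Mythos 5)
Your overall architecture mirrors the paper's, but there is a genuine logical gap in how you propose to establish condition~(iii) at the boundary hexagons. You claim that ``the tetrahedral relation $W(\square)+W(\square')+W(\square'')=0$ of Definition~\ref{def:flattening}(i) makes the product reduce to $\pm I$,'' and proceed as if the $\PSLC$-cocycle condition at these cells were then settled. It is not: the flattening relation only forces the diagonal entries of the product to be $1$, producing a unipotent matrix $\left[\begin{smallmatrix}1&-b\\0&1\end{smallmatrix}\right]$ whose off-diagonal entry $b$ is a priori arbitrary. In $\PSLC$ such a matrix equals $\pm I$ only when $b=0$, and proving $b=0$ is exactly the nontrivial computation: it uses the shape relation $z(\square'')=1-1/z(\square)$ together with the specific parities $\delta$ and the medium-edge orientations forced by Rule~\ref{coorient-rule}. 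The paper does precisely this in Step~1, Cases~1.1 and~1.2, of its proof. You do eventually acknowledge the need for this check in your treatment of condition~(i), but that makes your proposed order (iii)~$\to$~(i) circular, since the boundary-hexagon part of the $\PSLC$-cocycle verification already depends on the $\SLC$-level cancellation you only carry out under~(i). The fix is simply to reverse the order: establish (i) first, as the paper does, after which the boundary contributions to~(iii) come for free.

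The remaining pieces are essentially the paper's argument: the identities $(ST)^3=I$, $(H_W S)^2=-I$, and $H_{(\mathbf{G}W)_E}=I$ at large hexagons, rectangles and edge discs; the upper-triangularity argument together with $\mathbf{G}_\partial W=0$ for condition~(ii); and the comparison with the shape-defined $\PSLC$-cocycle of \cite{siejakowski-thesis} for the holonomy statement in~(iii). You leave this last comparison to a citation; the paper spells out the required coboundary conjugation by $w\mapsto -w$ at each ``sink'' vertex, but that is underdeveloped detail on your side rather than a gap.
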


\begin{proof}
We split the proof into several steps, verifying one by one the conditions
in Definition~\ref{def:nice-cocycle}.
\Proofpart{Step 1:}{$\mathcal{L}|_{\partial M}$ is an \nword{\SLC}{cocycle}.}
In order to check condition \eqref{item:lab-SL-cocyc-bdry}, we must verify the
\nword{\SLC}{cocycle} conditions for all boundary \nword{2}{cells} of $\triang_{00}$.
We start by considering a boundary hexagon coming from a truncated vertex of a tetrahedron.
There are two cases: either all parities
$\delta(\square)$, $\delta(\square')$, $\delta(\square'')$
in the tetrahedron are odd or exactly one of them is odd.
We shall write $\delta$, $\delta'$, $\delta''$ for short and consider both cases separately.

\Proofpart{Case 1.1:}{$\delta=\delta'=\delta''=1$.}
Consider a tetrahedron in which the parities are all odd.
Hence, the face co-orientations are either all inwards or all outwards.
Figure~\ref{fig:horohex-all-out} shows the case of all co-orientations pointing outwards.
\begin{figure}
    \centering
    \parbox{0.25\columnwidth}{%
    \begin{tikzpicture}
        \node[anchor=south west,inner sep=0] (image) at (0,0)
            {\includegraphics[width=0.2\columnwidth]{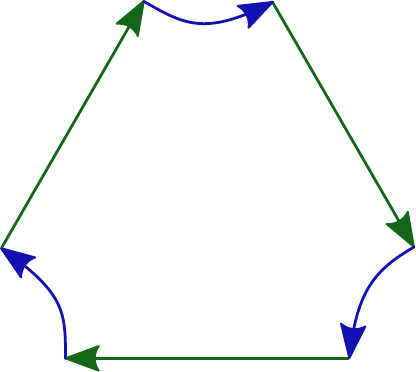}};
        \begin{scope}[x={(image.south east)},y={(image.north west)}]
            \node[anchor=north east] at (0.13, 0.2)  {$H_W$};
            \node[anchor=north west] at (0.87, 0.2)  {$H_{W'}$};
            \node[anchor=south] at (0.5, 0.95)  {$H_{W''}$};
            \node[anchor=south east] at (0.15, 0.6) {$T$};
            \node[anchor=south west] at (0.85, 0.6) {$T$};
            \node[anchor=north] at (0.5,0.03) {$T$};
        \end{scope}
    \end{tikzpicture}%
    }\parbox{0.72\columnwidth}{
    \caption{%
    \label{fig:horohex-all-out}
    A boundary hexagon in a doubly truncated tetrahedron with $\delta=\delta'=\delta''=1$,
    viewed from the ideal vertex, with a possible orientation of its edges.
    }}
\end{figure}
The holonomy around the boundary hexagon is then
\begin{align*}
    H_{W''} T H_{W'} T H_W T
    &=
    \begin{bmatrix}e^{W''/2} & -e^{W''/2}\\ 0 & e^{-W''/2}\end{bmatrix}
    \begin{bmatrix}e^{W'/2} & -e^{W'/2}\\ 0 & e^{-W'/2}\end{bmatrix}
    \begin{bmatrix}e^{W/2} & -e^{W/2}\\ 0 & e^{-W/2}\end{bmatrix}
    \\
    &=
    \begin{bmatrix}e^{(W+W'+W'')/2} & -b\\ 0 & e^{-(W+W'+W'')/2}\end{bmatrix}\\
    &= \begin{bmatrix}1 & -b\\ 0 & 1\end{bmatrix},
\end{align*}
where
\begin{align*}
    b &= e^{(W+W'+W'')/2}+e^{(-W+W'+W'')/2}+e^{(-W-W'+W'')/2}\\
    &= 1 + e^{-W} + e^{W''}
    = 1 + e^{-Z+i\pi f} + e^{Z''-i\pi f''}\\
    &= 1 + (-1)^\delta\tfrac{1}{z} + (-1)^{\delta''}z''
    = 1-\tfrac{1}{z} - \bigl(1-\tfrac{1}{z}\bigr)
    = 0.
\end{align*}
When all faces of the tetrahedron are co-oriented inwards, so that
the medium edges have orientations opposite to those shown in
Figure~\ref{fig:horohex-all-out}, the holonomy around the hexagon is
\[
    H_W^{-1} T H_{W'}^{-1} T H_{W''}^{-1} T = H_{-W}TH_{-W'}TH_{-W''}T.
\]
Using the previous calculation, we find the above element to be equal to
$\left[\begin{smallmatrix}1&-b'\\0&1\end{smallmatrix}\right]$, where this time
\begin{align*}
    b' &= e^{-(W+W'+W'')/2}+e^{(-W-W'+W'')/2}+e^{(-W+W'+W'')/2}\\
     &= 1 + e^{W''} + e^{-W} = b = 0.
\end{align*}
Therefore, the $\SLC$ cocycle condition holds around the boundary hexagon.

\Proofpart{Case 1.2:}{$\delta=1$, $\delta'=\delta''=0$.}
This is the case of exactly one of the three parity parameters being odd;
without loss of generality, we take it to be $\delta$.
An example of this situation is shown in Figure~\ref{fig:horohex-taut}.
\begin{figure}
    \centering
    \parbox{0.25\columnwidth}{%
    \begin{tikzpicture}
        \node[anchor=south west,inner sep=0] (image) at (0,0)
            {\includegraphics[width=0.2\columnwidth]{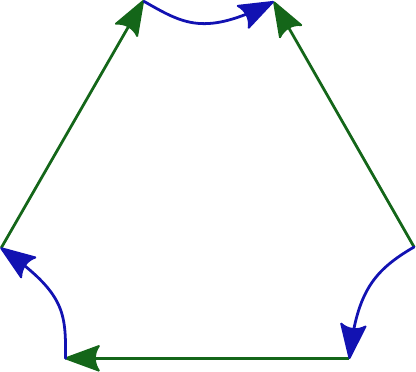}};
        \begin{scope}[x={(image.south east)},y={(image.north west)}]
            \node[anchor=north east] at (0.13, 0.2)  {$H_W$};
            \node[anchor=north west] at (0.87, 0.2)  {$H_{W'}$};
            \node[anchor=south] at (0.5, 0.95)  {$H_{W''}$};
            \node[anchor=south east] at (0.15, 0.6) {$T$};
            \node[anchor=south west] at (0.85, 0.6) {$T$};
            \node[anchor=north] at (0.5,0.03) {$T$};
        \end{scope}
    \end{tikzpicture}%
    }\parbox{0.73\columnwidth}{
    \caption{%
    A possible orientation of edges around a boundary hexagon
    in a tetrahedron with $\delta=1$ and $\delta'=\delta''=0$.
    }\label{fig:horohex-taut}}
\end{figure}
In the case presented in the figure, the holonomy
around the boundary hexagon equals:
\begin{align*}
    H_{W''} T^{-1} H_{W'} T H_W T
    &=
    \begin{bmatrix}e^{W''/2} & e^{W''/2}\\ 0 & e^{-W''/2}\end{bmatrix}
    \begin{bmatrix}e^{W'/2} & -e^{W'/2}\\ 0 & e^{-W'/2}\end{bmatrix}
    \begin{bmatrix}e^{W/2} & -e^{W/2}\\ 0 & e^{-W/2}\end{bmatrix}
    \\
    &=
    \begin{bmatrix}e^{(W+W'+W'')/2} & -b\\ 0 & e^{-(W+W'+W'')/2}\end{bmatrix}\\
    &= \begin{bmatrix}1 & -b\\ 0 & 1\end{bmatrix},
\end{align*}
where
\begin{align*}
    b &= e^{(W+W'+W'')/2} + e^{(-W+W'+W'')/2} - e^{(-W-W'+W'')/2}\\
    &= 1+e^{-W}-e^{W''}
    = 1+e^{-Z+i\pi f}-e^{Z''-i\pi f''}\\
    &= 1+(-1)^{\delta}\tfrac{1}{z} - (-1)^{\delta''}z''
    = 1-\tfrac{1}{z} - \bigl(1-\tfrac{1}{z}\bigr)
    = 0.
\end{align*}
Likewise, when the directions of all medium arrows are reversed,
we obtain the holonomy
\[
    H_{W'}^{-1}T^{-1}H_{W''}^{-1}TH_W^{-1}T
    = H_{-W'}T^{-1} H_{-W''} T H_{-W} T.
\]
Using the previous calculation, this matrix product
equals $\left[\begin{smallmatrix}1&-b'\\0&1\end{smallmatrix}\right]$,
where
\begin{align*}
    b' &=  e^{-(W+W'+W'')/2} + e^{(W-W'-W'')/2} - e^{(W-W'+W'')/2}\\
    &= 1 + e^W - e^{-W'}
    = 1+(-1)^\delta z - (-1)^{\delta'}\tfrac{1}{z'}\\
    &= 1 - z - (1-z) = 0.
\end{align*}
In conclusion, the $\SLC$ cocycle condition around boundary hexagons holds in all cases.

In order to verify the cocycle condition around the edge discs,
we consider an edge $E$ of $\triang$.
Since the orientations of Rule~\ref{rule:short-edges} are consistent around the boundary
of each of the two discs capping off the cylindrical 3-cell around $E$,
the holonomy along these boundaries is always
\[
    \begin{bmatrix} \exp\bigl(\frac12 \sum_{\square} G(E,\square)W(\square)\bigr) & 0\\
    0 & \exp\bigl(-\frac12 \sum_{\square} G(E,\square)W(\square)\bigr)\end{bmatrix}
    = \begin{bmatrix}1 & 0\\ 0 & 1\end{bmatrix},
\]
where we have used property~\eqref{item:sum-of-flatt-around-edge-equals-zero} of
Definition~\ref{def:flattening}.

\Proofpart{Step 2:}{Boundary unipotence.}
Thanks to Step~1 above, we know that the restriction of the labelling $\mathcal{L}$
to $\partial\triang_{00}$ defines a flat $\SLC$-bundle
on each component of $\partial M$.
We will now show that for each boundary component, the monodromy
of this bundle is always unipotent.

Given a closed peripheral curve $\gamma$,
we can homotope $\gamma$ into the \nword{1}{skeleton} of the cell
complex $\partial\triang_{00}$, so that $\gamma$ is made up of
short and medium edges only.
Therefore, the monodromy along $\gamma$ will be a product of elements of the form
$H_{W(\square)}^{\pm 1}$ and $T^{\pm 1}$.
These matrices are all upper-triangular, so the monodromy along $\gamma$ is also
an upper-triangular matrix whose top-left entry is the product of top-left entries
of all the matrices being multiplied.
Since the top-left entry of $T^{\pm 1}$ is $1$, only the matrices $H_{W(\square)}^{\pm 1}$
contribute to this product.
More precisely, the \nword{(1,1)}{entry} of the monodromy along $\gamma$ equals
\[
     \prod_{\square\in Q(\triang)} \left(e^{W(\square)/2}\right)^{G(\gamma,\square)}
     = \exp\Bigl(\tfrac{1}{2}\sum_{\square\in Q(\triang)} G(\gamma,\square)W(\square)\Bigr) = 1.
\]
Therefore, both diagonal entries of the monodromy matrix are equal to $1$.

\Proofpart{Step 3:}{$\mathcal{L}$ projects to a \nword{\PSLC}{cocycle}.}
We must now show that the elements assigned by $\mathcal{L}$ to oriented
edges of $\triang_{00}$ project to a \nword{\PSLC}{cocycle} under the quotient
homomorphism $\SLC\to\PSLC$.
Step~1 implies that the cocycle condition holds around boundary \nword{2}{cells}
of $\triang_{00}$, so it suffices to study the large hexagonal faces and the rectangles.

As shown in Figure~\ref{fig:coorientation}, the holonomy around the boundary of
a large hexagonal face has the form $(TS)^3$ and is therefore equal to the identity matrix,
as can be verified by direct computation.

It remains to study the holonomy around a rectangle.
We denote by $W=W(\square)$ and $\delta=\delta(\square)$ the quantities
assigned to the normal quad $\square$ parallel to the rectangle and consider two cases
depending on the value of $\delta$.

\Proofpart{Case 3.1:}{$\delta=0$.}
\begin{figure}
    \centering
    \begin{tikzpicture}
        \node[anchor=south west,inner sep=0] (image) at (0,0)
            {\includegraphics[width=0.17\columnwidth]{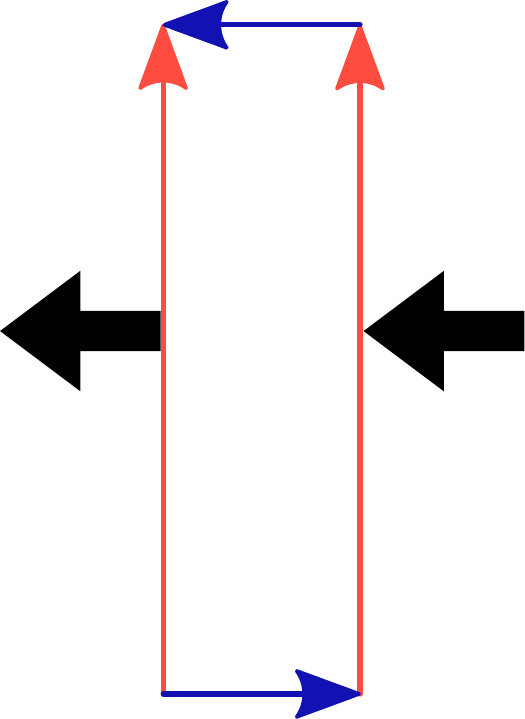}};
        \begin{scope}[x={(image.south east)},y={(image.north west)}]
            \node[anchor=south] at (0.5, 0.97)  {$H_W$};
            \node[anchor=north] at (0.5, 0.03)  {$H_W$};
            \node[anchor=west] at (0.73, 0.9)   {$S$};
            \node[anchor=east] at (0.27, 0.9)   {$S$};
            \node at (0.5, 0.54) {\small $\delta=0$};
        \end{scope}
    \end{tikzpicture}
    \hspace{0.1\columnwidth}
    \begin{tikzpicture}
        \node[anchor=south west,inner sep=0] (image) at (0,0)
            {\includegraphics[width=0.17\columnwidth]{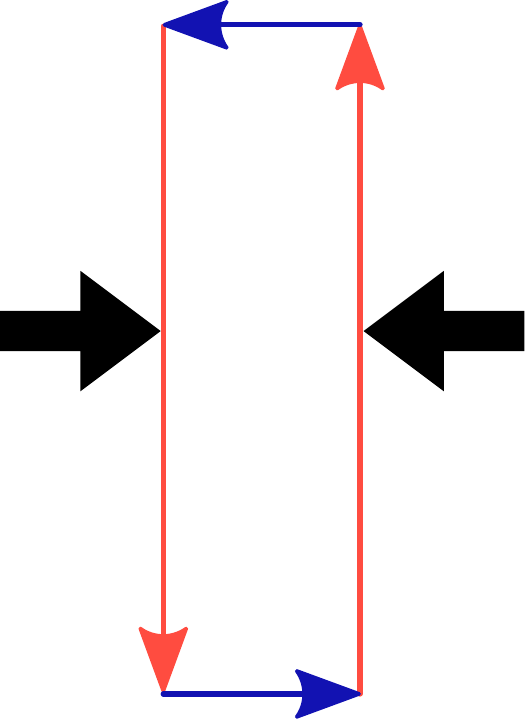}};
        \begin{scope}[x={(image.south east)},y={(image.north west)}]
            \node[anchor=south] at (0.5, 0.97)  {$H_W$};
            \node[anchor=north] at (0.5, 0.03)  {$H_W$};
            \node[anchor=west] at (0.73, 0.9)   {$S$};
            \node[anchor=east] at (0.27, 0.1)   {$S$};
            \node at (0.5, 0.54) {\small $\delta=1$};
        \end{scope}
    \end{tikzpicture}%
    \caption{\label{fig:rects}
    Holonomy around a rectangular \nword{2}{cell} in $\triang_{00}$,
    as viewed from outside the tetrahedron.
    Shown are two cases depending on the value of the parity parameter $\delta$
    on the corresponding normal quadrilateral type.
    The co-orientations of the adjacent large hexagonal faces, represented
    by the thick black arrows, induce the orientations of the long edges
    as in Figure~\ref{fig:coorientation}.
    }
\end{figure}
This case is illustrated on the left side of Figure~\ref{fig:rects}.
The holonomy around the rectangle is
\[
    H_W S H_W S^{-1} = - \left(H_W S\right)^2 = \begin{bmatrix}
    1 & 0\\
    0 & 1
    \end{bmatrix}.
\]
In other words, the $\SLC$ cocycle condition is satisfied on a rectangle with $\delta=0$,
which in particular implies the $\PSLC$ cocycle condition.

\Proofpart{Case 3.2:}{$\delta=1$.}
This case is illustrated on the right side of Figure~\ref{fig:rects}.
The holonomy around the rectangle is
\[
    H_W S H_W S = \left(H_W S\right)^2 =
    -\begin{bmatrix}
    1 & 0\\
    0 & 1
    \end{bmatrix}.
\]
Therefore, the cocycle condition \emph{fails} over $\SLC$ when $\delta=1$,
even though it still holds after projecting the labelling elements to $\PSLC$.
The same situation occurs when both co-orientations point away from the tetrahedron:
this is seen by replacing $S$ by $-S$ in the above calculation.

\Proofpart{Step 4:}{Relationship with $\rho$.}
Let $\rho$ be the boundary-parabolic $\PSLC$ representation determined, up to conjugacy,
by the shape parameter solution $z$ for which $W$ is a strong combinatorial flattening.
It remains to justify that the \nword{\PSLC}{cocycle} induced from $\mathcal{L}$
recovers the conjugacy class of $\rho$.

A direct construction of a \nword{\PSLC}{cocycle} on $\triang_{00}$ from the hyperbolic shapes
is given in \cite[\S2.3.2]{siejakowski-thesis}.
Therefore, it suffices to show that these two \nword{\PSLC}{cocycles} differ by a coboundary.

We denote by $\mathcal{L'}$ the cocycle of \cite{siejakowski-thesis}, the definition
of which we now briefly recall:
\begin{itemize}
    \item
    $\mathcal{L'}$ labels all long edges in $\triang_{00}$
    with the complex M\"obius transformation $w\mapsto w^{-1}$
    represented by the \nword{\SLC}{matrix}
    $R=\left[\begin{smallmatrix}0&i\\i&0\end{smallmatrix}\right]$.
    \item
    Medium edges are labelled with the M\"obius transformation $w\mapsto 1-w$,
    which is the image of $C=\left[\begin{smallmatrix}-i&i\\0&i\end{smallmatrix}\right]$
    under the quotient homomorphism.
    \item
    $\mathcal{L'}$ labels each oriented short edge with $\pm H_{Z(\square)}\in\PSLC$.
\end{itemize}
Note that the M\"obius transformations induced by $R$ and $C$ are both involutions, so the choice of
orientations of the long and medium edges is not needed in the construction of $\mathcal{L'}$.
For this reason, $\mathcal{L'}$ is defined purely in terms of
the shape parameters, without the need for a flattening.

We now explain how to obtain $\mathcal{L}$ from $\mathcal{L'}$.
Suppose that a strong combinatorial flattening $W$ for $z$ has been chosen
and that the \nword{1}{cells} of $\triang_{00}$ have been oriented accordingly.
We shall call a vertex of $\triang_{00}$ a \emph{sink} if it is incident to a medium edge oriented towards it.
For each sink vertex, we perform a coboundary action on $\mathcal{L'}$, locally conjugating by
the M\"obius transformation $w\mapsto -w$.
Hence, the M\"obius transformation labelling the adjacent medium edge becomes
\[
    \bigl(w \mapsto -(1-w)\bigr) = \bigl(w \mapsto w-1\bigr) \sim
    \pm\begin{bmatrix}1 & -1\\0 & 1\end{bmatrix} = \pm T.
\]
Similarly, the long edge adjacent to a sink vertex, labelled in $\mathcal{L'}$ by the
element $w\mapsto w^{-1}$, will conjugate to
\[
    \bigl(w \mapsto (-w)^{-1}\bigr)
    =
    \bigl(w \mapsto \tfrac{-1}{w}\bigr)
    \sim \pm\begin{bmatrix}0 & -1\\1 & 0\end{bmatrix}=\pm S.
\]
This coboundary modification will also modify the elements $\pm H_{Z(\square)}$ associated to the short
edges in $\mathcal{L'}$.
It is easy to see that this modification amounts to shifting the log-parameters~$Z$
by integer multiples of $\pi i$,
which has the same effect as replacing them by
the values of the flattening~$W$.
\end{proof}

\begin{corollary}\label{cor:almost-SLC}
As seen in Step~3 of the above proof, the labelling $\mathcal{L}$
satisfies the $\SLC$ cocycle conditions around all \nword{2}{cells} of $\triang_{00}$
except possibly for the rectangles.
Moreover, the holonomy of $\mathcal{L}$ around a rectangle with the parity parameter $\delta=\delta(\square)$
is always $(-1)^\delta \left[\begin{smallmatrix}1&0\\0&1\end{smallmatrix}\right]$.
\end{corollary}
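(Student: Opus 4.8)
The plan is to observe that the corollary merely repackages facts already established in the proof of Lemma~\ref{labelling-works}; the only task is to check that all four types of 2-cells of $\triang_{00}$ have been accounted for. Recall that the 2-cells of $\triang_{00}$ are the large hexagonal faces, the boundary hexagons, the rectangles, and the edge discs, and that $\partial\triang_{00}$ consists precisely of the boundary hexagons and the edge discs. Step~1 of the proof of Lemma~\ref{labelling-works} already verifies the full $\SLC$ cocycle relation around every boundary hexagon --- Cases~1.1 and~1.2 treat the two possible parity patterns by a direct matrix product showing the holonomy is $\left[\begin{smallmatrix}1&0\\0&1\end{smallmatrix}\right]$ --- and around every edge disc, where property~\eqref{item:sum-of-flatt-around-edge-equals-zero} of Definition~\ref{def:flattening} forces the holonomy to be the identity. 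So the $\SLC$ cocycle relation holds around all 2-cells of $\partial\triang_{00}$.

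First I would dispose of the large hexagonal faces. By Figure~\ref{fig:coorientation}, the boundary word of $\mathcal{L}$ around such a face is, up to cyclic permutation and inversion, the alternating product $(TS)^3$, and a one-line computation gives $(TS)^3=\left[\begin{smallmatrix}1&0\\0&1\end{smallmatrix}\right]$ in $\SLC$; hence the $\SLC$ cocycle relation in fact holds around large hexagonal faces as well. Combined with the previous paragraph, this leaves the rectangles as the only 2-cells of $\triang_{00}$ where the $\SLC$ cocycle relation can fail, which is the first assertion.

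Next I would re-examine the rectangles quantitatively. Fix a rectangle parallel to a quad $\square$ and write $W=W(\square)$, $\delta=\delta(\square)$. Step~3 of the proof of Lemma~\ref{labelling-works} computes the holonomy around the rectangle to be $H_W S H_W S^{-1}=-(H_W S)^2$ when $\delta=0$ and $H_W S H_W S=(H_W S)^2$ when $\delta=1$. Since $H_W=\left[\begin{smallmatrix}e^{W/2}&0\\0&e^{-W/2}\end{smallmatrix}\right]$, we have $H_W S=\left[\begin{smallmatrix}0&-e^{W/2}\\e^{-W/2}&0\end{smallmatrix}\right]$, so $(H_W S)^2=-\left[\begin{smallmatrix}1&0\\0&1\end{smallmatrix}\right]$, and the rectangle holonomy equals $(-1)^{\delta}\left[\begin{smallmatrix}1&0\\0&1\end{smallmatrix}\right]$ in both cases; reversing the ambient orientation of $M$ replaces $S$ by $-S$, which does not change this quadratic expression. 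This gives the second assertion. There is essentially no obstacle here, since the corollary is a bookkeeping consequence of computations already performed; the only point needing a little care is to confirm that the classification of 2-cells is exhaustive and that the orientation conventions of Rule~\ref{rule:short-edges} and Figure~\ref{fig:coorientation} are applied uniformly, so that every rectangle's boundary word is genuinely of the stated form with the sign pattern controlled solely by $\delta(\square)$.
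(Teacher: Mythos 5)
Your argument is correct and matches the paper's intent exactly: the paper offers no separate proof of this corollary but simply refers back to Steps~1 and~3 of the proof of Lemma~\ref{labelling-works}, and your write-up is precisely the careful assembly of those facts --- the boundary hexagon and edge-disc checks from Step~1, the $(TS)^3=\left[\begin{smallmatrix}1&0\\0&1\end{smallmatrix}\right]$ computation for large hexagonal faces, and the rectangle holonomy $(-1)^{\delta}\left[\begin{smallmatrix}1&0\\0&1\end{smallmatrix}\right]$ from Cases~3.1 and~3.2. The bookkeeping that the four face types are exhaustive and that orientation reversal does not affect the quadratic expression $(H_W S)^2$ is exactly the point to verify, and you have done so.
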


\begin{proof}[Proof of Theorem~\ref{rect=Obs}]
We begin by showing part~\eqref{item:periph-Obs}.
Suppose that $z$ is an algebraic solution of edge consistency and completeness
equations on $\triang$ and that $W$ is a strong combinatorial flattening for $z$.
Denote by $\rho=\rho_z$ the associated boundary-parabolic representation $\pi_1(M)\to\PSLC$.
Using $W$, we construct a boundary-compatible labelling $\mathcal{L}$.
By Corollary~\ref{cor:almost-SLC}, the obstruction class to lifting $\rho$
to a boundary-unipotent \nword{\SLC}{representation} is represented by the cocycle
$\rect(\delta) \in Z^2(\triang_{00},\partial\triang_{00};\FF_2)$,
where $\delta\in\FF_2^{Q(\triang)}$ is given in terms of $W$ by equation~\eqref{def-parity-of-flattening}.
Observe that $\delta(\square)+\delta(\square')+\delta(\square'')=1$ and that $\delta$ has
even parity along all unoriented curves in normal position with respect to $\triang$.
Therefore, $\delta\in\AFpar(\triang)$.

We must now show that $(-1)^\delta$ belongs to the same connected component of $\SAS_0(\triang)$
as the \Sval\ angle structure $\omega(\square) = z(\square)/|z(\square)|$.
Recall that $\delta(\square)=[f(\square)]_2$ where $\pi i f(\square) = Z(\square) - W(\square)$.
By Definition~\ref{def:flattening}, $\Imag W\in\TAS_0(\triang)$, i.e., $\Imag Z - \pi f\in\TAS_0(\triang)$.
Since $\exp\bigl(i\Imag Z(\square)\bigr) = \omega(\square)$,
Remark~\ref{TAS-and-connected-components} implies that
the circle-valued angle structures $\omega$ and $(-1)^\delta$ lie in the same connected
component of $\SAS_0(\triang)$.

In order to prove part~\eqref{item:noperiph-Obs}, we must consider an algebraic solution $z$
of the edge consistency equations which does not necessarily satisfy the completeness equations.
Let $\rho$ be the associated $\PSLC$ representation and let $\omega(\square)=z(\square)/|z(\square)|$
be the \Sval\ angle structure determined by $z$.
Once again, we may choose log-parameters $Z(\square)$ such that
$Z(\square) + Z(\square') + Z(\square'') = i\pi$ in every tetrahedron.
Let $c\in2\ZZ^{\edges(\triang)}$ be the even vector given by
$\mathbf{G}Z=\pi i c$.
By Theorem~\ref{generalization-of-6.1}, there exists a vector $\eta\in\ZZ^{Q(\triang)}$
such that $\eta(\square) + \eta(\square') + \eta(\square'')=1$ in every tetrahedron,
$\mathbf{G}\eta = c$, and moreover $\eta$ has even parity along all curves in normal
position with respect to $\triang$.
Therefore, $W(\square) := Z(\square) - i\pi\eta(\square)$
satisfies all of the conditions in the definition of a strong combinatorial
flattening except for the vanishing of $\mathbf{G_\del}W$.

Using Rule~\ref{labelling-rule}, we may produce a labelling $\mathcal{L}$ of \nword{1}{cells}
of $\triang_{00}$ with elements of $\SLC$ in terms of $W$.
Since the representation $\rho$ is not assumed boundary-parabolic,
the labelling $\mathcal{L}$ will not induce boundary-unipotent representations
of the peripheral subgroups.
Nonetheless, Steps~1, 3 and~4 in the proof of Lemma~\ref{labelling-works} do not
require this assumption and therefore can be repeated without modification.
In particular, for $\delta=[\eta]_2$ we have $\delta\in\AFpar(\triang)$ and
$[\rect(\delta)]=\Obs(\rho)\in H^2(M;\FF_2)$.
Moreover, $(-1)^\delta$ lies in the same connected component of $\SAS(\triang)$ as $\omega$,
because $\Imag W \in \TAS(\triang)$.
\end{proof}

\section{Example: the sister of the figure-eight knot complement}\label{sec:example-m003}
In this section we will illustrate our theory by examining the case of the standard two tetrahedron triangulation~$\triang$ of the ``sister'' of the figure-eight knot
complement~$M$, denoted $m003$ in SnapPy~\cite{snappy}. See Figure~\ref{m003tri}. This manifold has 
$H^2(M,\del M; \FF_2)=\FF_2$ and $H^2(M;\FF_2)=0$,
so, according to our Theorem~\ref{intro-connected-component-counts}, the space 
$\SAS(\triang)$ is a single 3-dimensional torus, while $\SAS_0(\triang)$ consists of two 1-dimensional tori. 

From G\"orner's Ptolemy database of boundary-parabolic representations \cite{ptolemy}, we see that $m003$ has four (conjugacy classes of) 
boundary parabolic $\PSLC$-representations. One is the complex representation $\rho_1$ associated to the complete hyperbolic structure, one is the complex conjugate of $\rho_1$, and there are two real representations. Below we will consider one of these real representations and denote it $\rho_2$; the other is its Galois conjugate. We see from \cite{ptolemy} that the obstruction classes associated to $\rho_1$ and $\rho_2$ are the two different elements of $H^2(M,\del M; \FF_2)=\FF_2$. 
It is known that the class associated to the complex representation $\rho_1$ is non-trivial (see, e.g. \cite[Corollary~2.4]{calegari}), and hence that the class associated to the real representation $\rho_2$ is trivial.

Thus our main theorem, Theorem~\ref{obstruction-theorem}, predicts that if we take solutions of Thurston's equations determining these two representations, then their corresponding \Sval\ angle structures $\omega_1$ and $\omega_2$ will lie on different components of $\SAS_0(\triang)$, and moreover that $\Phi_0(\omega_1)= 1$, while $\Phi_0(\omega_2)=0$ in $H^2(M,\del M; \FF_2)$. 
To illustrate our theory we will  concretely verify 
these predictions.

\begin{figure}[ht]
\includegraphics[width=0.95\columnwidth]{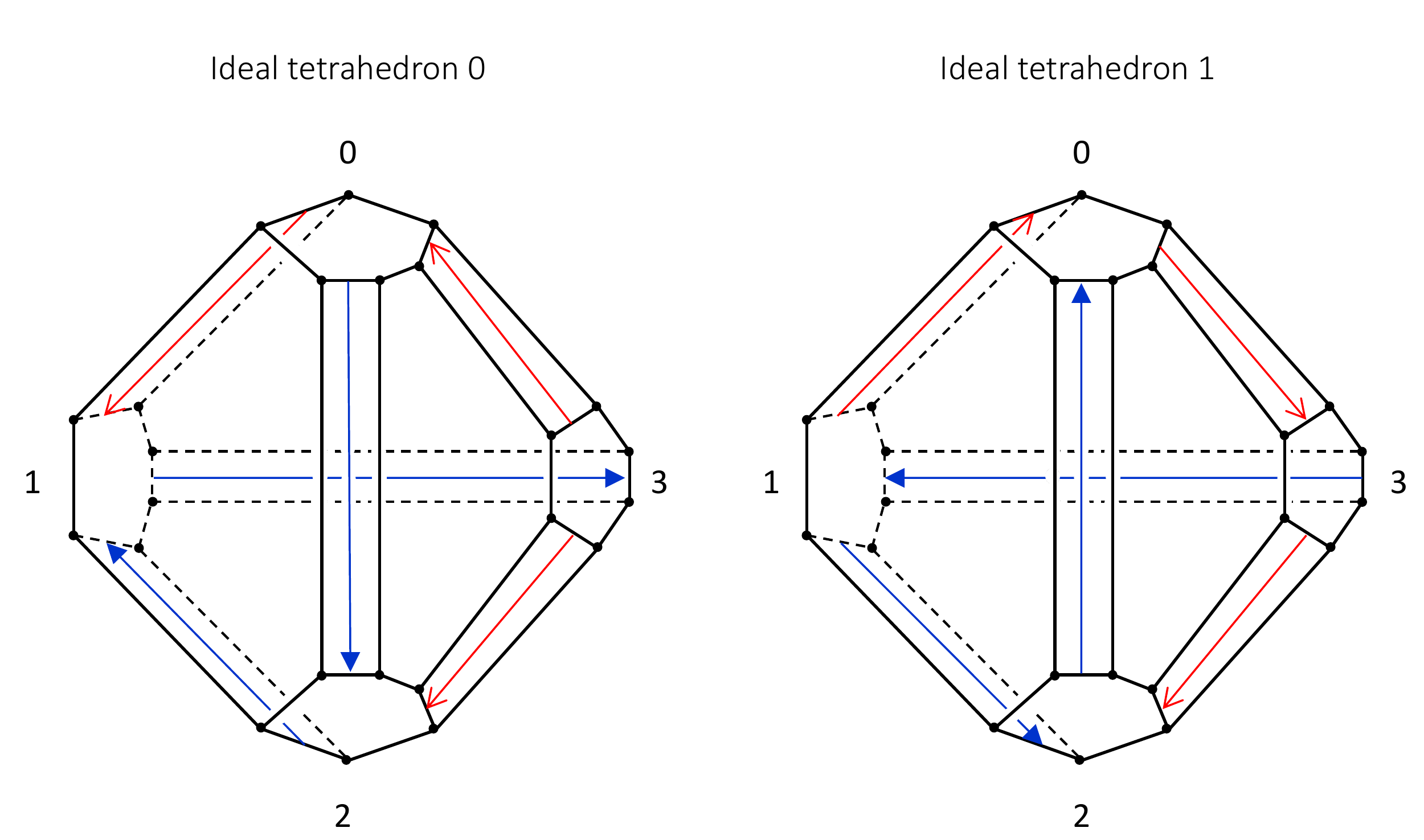}
\caption{\label{m003tri} Diagram of the doubly truncated tetrahedra arising from the ideal triangulation of m003, with vertices labelled as in SnapPy. Red arrows (marked with a thin arrow-head) are parallel to the edge 0, while blue arrows (marked with a full arrow-head) are parallel to the edge 1. Face pairings take each face of ideal tetrahedron 0 to the corresponding face of ideal tetrahedron 1, by the reflection preserving edge colourings.}
\end{figure}

\medskip
The full gluing matrix is given by SnapPy as
\[
    \begin{bmatrix}\mathbf{G}\\\mathbf{G_\del}\end{bmatrix} =
    \begin{bmatrix}G & G' & G''\\G_\del & G'_\del & G''_\del\end{bmatrix} =
    \left[\begin{array}{cc|cc|cc}
        2 & 2 & 0 & 0 & 1 & 1 \\
        0 & 0 & 2 & 2 &  1 & 1 \\
        0& 2 & -2 & 0 & 0 & 0 \\
        0& 2& -1 & -1 & 0 & 0
    \end{array}\right].
\]
This matrix corresponds to the standard ordering of quad-types in each tetrahedron of Figure~\ref{m003tri}: $\{01,23\}$, $\{02,13\}$, $\{03,12\}$. 
The matrix of leading-trailing deformations based on the two edges of $\triang$ is
\[
    \mathbf{L}=
    \begin{bmatrix}L & L' & L''
    \end{bmatrix} =
    \left[\begin{array}{cc|cc|cc}
        1 & 1 & 1 & 1 & -2 & -2 \\
        -1 & -1 & -1 & -1 &  2 & 2 
    \end{array}\right].
\]

First we'll consider the complex representation $\rho_1\colon \pi_1(M) \to \PSLC$. The geometric solution of Thurston's gluing equations giving the complete
hyperbolic structure of finite volume is given by $z_1=z_2=e^{i\pi/3}$; this gives a boundary-parabolic representation $\rho_1\colon \pi_1(M) \to \PSLC$. The associated $S^1$-valued angle structure is 
$$\omega_1= e^{i \pi/3}(1,1,1,1,1,1).$$
According to our definition of $\Phi_0$ (see Section~\ref{obstruction-map-defn}), we need to find a $\{\pm 1\}$-valued angle structure on the same component of $\SAS_0(\triang)$ as $\omega_1$ satisfying a  certain parity condition, and apply the rectangle map to that. Such a $\{\pm 1\}$-valued angle structure can be obtained from a strong combinatorial flattening of the solution $z$, as we discuss in Section~\ref{combinatorial_flattenings_section}.

We find there is a strong combinatorial flattening given by the integer adjustment vector $f_1=(0,0,0,0,1,1)$ with corresponding  mod 2 parity vector $\delta_1=(0,0,0,0,1,1) \bmod 2$.  
(Here it is enough to check the parity condition in Definition~\ref{strong-flattening} holds for edge loops and peripheral curves since inclusion induces a surjection $H_1(\del M;\FF_2) \to H_1(M;\FF_2)$.) 
Then $(-1)^{\delta_1} = (1,1,1,1,-1,-1)$ is a $\{\pm 1\}$-valued angle structure also lying in the geometric component of $\SAS_0(\triang)$.
Now $\Phi_0(\omega_1)\in H^2(M,\del M;\FF_2)$ is defined to be the 
cohomology class represented by $\rect(\delta_1)$. This cocycle $\rect(\delta_1)$ is 
supported on the shaded faces on the complex in Figure~\ref{rect1}. 

We expect that the class $[\rect(\delta_1)]\neq 0\in H^2(M,\del M;\FF_2)$. A brief elementary argument for this is as follows. In Figure~\ref{rect1} we show that this class is represented by a cocycle which takes the value 0 everywhere except on the internal hexagonal faces 012 and 023, where it takes the value 1. This cocycle cannot be the coboundary of a combination $y$ of the internal edges (``internal" here means the edges of the complex not lying in the boundary). The reason is that if such a coboundary is zero on every rectangle, as required, then $y$ must be either $0$ on every edge of the complex parallel to a red arrow, or $1$ on every such edge. The same goes for the edges parallel to the blue arrows. Thus there are only 4 such $y$ which need to be checked explicitly, and none of them give this cocycle.  
Thus $[\rect(\delta_1)] \neq 0$ so $\Obs_0(\rho_1)=[\rect(\delta_1)]\neq 0 \in H^2(M,\del M;\FF_2)$ and the case $\rho_1$ checks out.
 
 \begin{figure}[ht]
 \includegraphics[width=0.95\columnwidth]{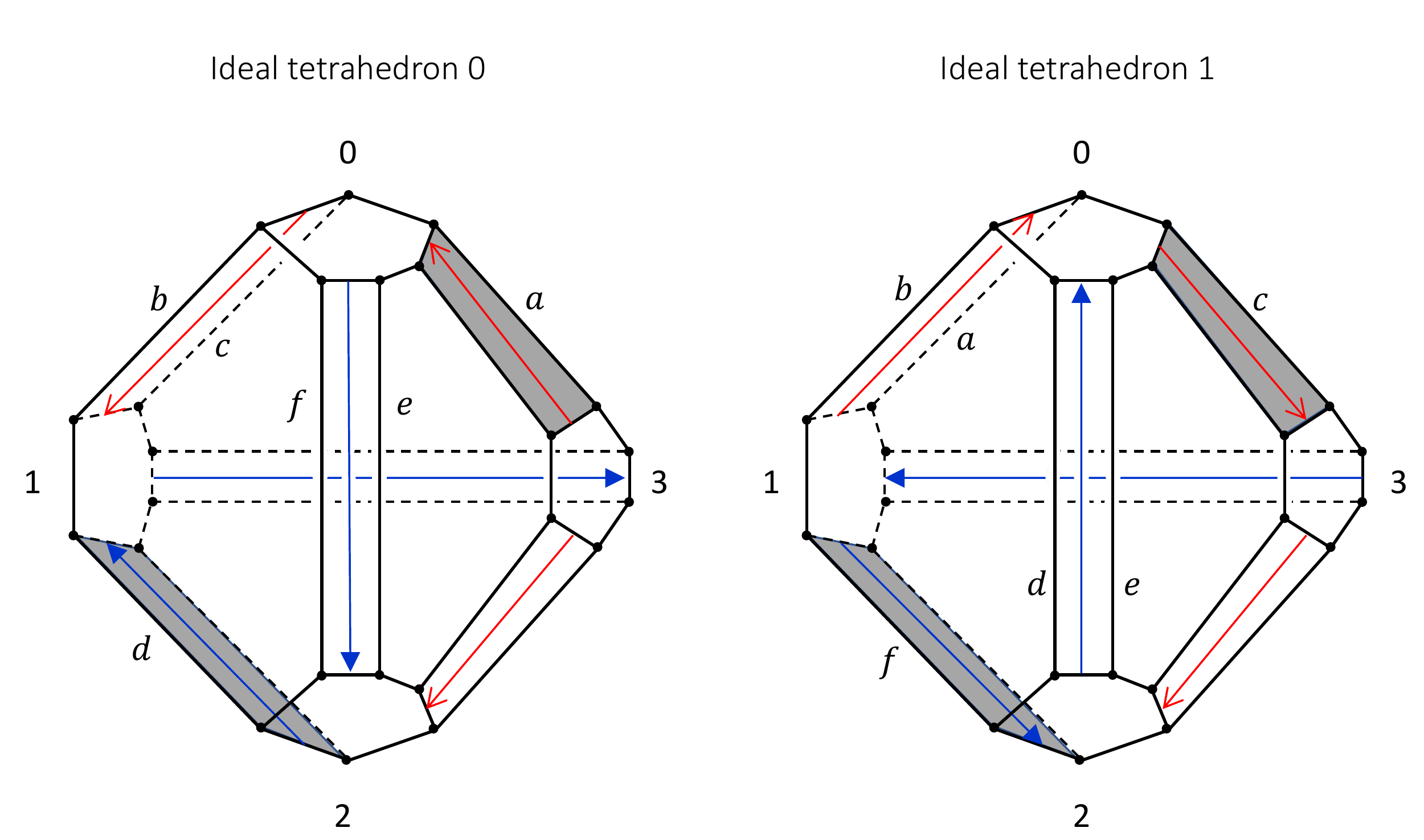}
 \caption{\label{rect1} The 2-cocycle $\rect(\delta_1)$ is supported on the shaded rectangles. Adding the coboundary of 1-chain $a+b+c+d+e+f$ supported on the labelled edges gives a 2-cocycle $c'$ supported on the 2 internal hexagonal faces 012 and 023. An elementary analysis verifies that this cocycle is not the coboundary of any 1-cochain supported on the internal edges. 
 }
 \end{figure}

\medskip

Next we'll consider the real boundary parabolic representation $\rho_2\colon \pi_1(M) \to \PSLR$. This arises from 
an interesting algebraic solution of Thurston's gluing and completeness equations given by
$z_1=z_2 = \tau$, 
where $\tau = (1 + \sqrt{5})/2$ is the golden ratio. 
The associated $S^1$-valued angle structure is $$\omega_2=(1,1,-1,-1,1,1).$$
Note that this angle structure can be expressed as $(-1)^{\delta_2}$
where $\delta_2=(0,0,1,1,0,0)$. 
Since $\omega_2$ is already a $\ZZ_2$-angle structure satisfying the parity conditions,  $\Phi_0(\omega_2)$ is represented directly by the cocycle $[\rect(\delta_2)]$. 
This cocycle is shown is Figure~\ref{rect2}. A quick calculation described there shows this class is zero in $H^2(M,\del M;\FF_2)$.
Thus $\Obs_0(\rho_2)=[\rect(\delta_2)]=0\in H^2(M,\del M;\FF_2)$ and the case $\rho_2$ also checks out.

 \begin{figure}[ht] 
  \includegraphics[width=0.95\columnwidth]{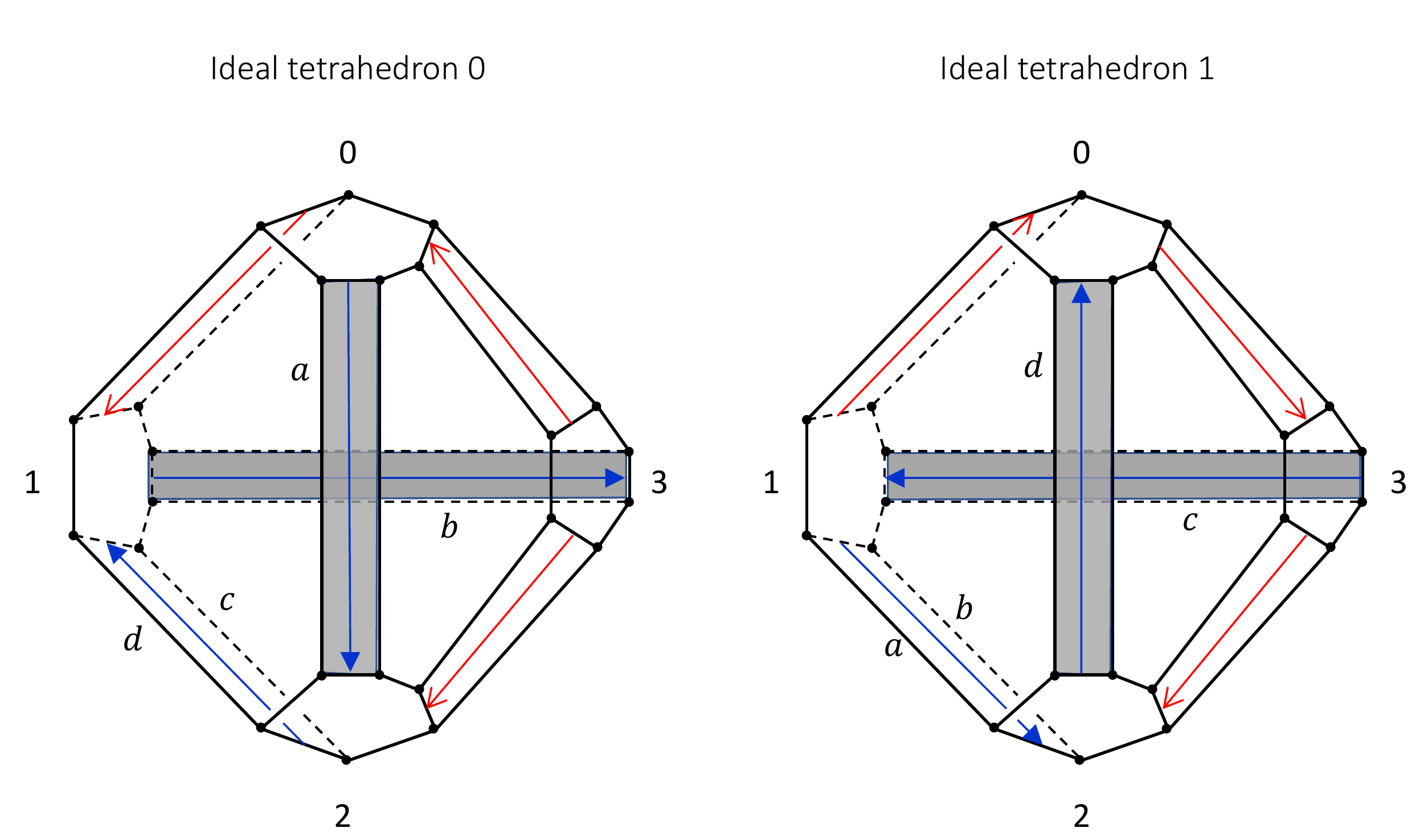}
 \caption{\label{rect2} The 2-cocycle $\rect(\delta_2)$ is supported on the shaded rectangles. Adding to this cocycle the coboundary of the 1-chain $a+b+c+d$ obtained from  labelled edges gives the zero 2-cocycle. Hence $[\rect(\delta_2)]$ is zero in relative cohomology.}
 \end{figure}

It is also straightforward to check from these details that the two angle structures $\omega_1$ and $\omega_2$ do indeed lie on different components of $\SAS_0(\triang)$. Indeed, because the leading-trailing deformations based on the edges generate the tangent space of $\SAS_0(\triang)$, if $\omega_1$ and $\omega_2$ lay on the same component, there would be a real number $x$ such that
\[
\delta_1-\delta_2 = (0,0,0,0,1,1)-(0,0,1,1,0,0)=x(1,1,1,1,-2,-2) \bmod 2 .
\]
But there is no such $x$.


\end{document}